\numberwithin{equation}{section}
\newtheorem{theorem}{Theorem}[section]
\newtheorem{lemma}[theorem]{Lemma}
\newtheorem{corollary}[theorem]{Corollary}
\newtheorem{proposition}[theorem]{Proposition}
\newtheorem{notation}[theorem]{Notation}
\newtheorem{example}[theorem]{Example}
\newtheorem{assumption}{Assumption}
\newtheorem{mainthm}{Theorem}
\theoremstyle{definition} 
\newtheorem{definition}[theorem]{Definition}
\newtheorem{remark}[theorem]{Remark}
\newcommand{\act}{\curvearrowright}
\newcommand{\al}{\alpha}
\newcommand{\cC}{\mathcal C}
\newcommand{\fC}{\mathfrak C}
\DeclareMathOperator{\Cone}{Cone}
\DeclareMathOperator{\Ded}{Ded}
\newcommand{\cD}{\mathcal D}
\newcommand{\cE}{\mathcal E}
\newcommand{\cF}{\mathcal F}
\DeclareMathOperator{\Frac}{Frac}
\DeclareMathOperator{\FS}{FS}
\newcommand{\cG}{\mathcal G}
\DeclareMathOperator{\Gr}{Gr}
\newcommand{\ga}{\gamma}
\newcommand{\Ga}{\Gamma}
\newcommand{\cH}{\mathcal H}
\DeclareMathOperator{\Homeo}{Homeo}
\DeclareMathOperator{\id}{id}
\newcommand{\into}{\hookrightarrow}
\newcommand{\cJ}{\mathcal{J}}
\newcommand{\la}{\langle}
\DeclareMathOperator{\Leaf}{Leaf}
\DeclareMathOperator{\Mon}{Mon}
\newcommand{\N}{\mathbf{N}}
\newcommand{\onto}{\twoheadrightarrow}
\newcommand{\ot}{\otimes}
\newcommand{\ov}{\overline}
\newcommand{\cP}{\mathcal P}
\DeclareMathOperator{\prune}{prune}
\newcommand{\fQ}{\mathfrak Q}
\newcommand{\R}{\mathbf{R}}
\newcommand{\fR}{\mathfrak R}
\newcommand{\ra}{\rangle}
\DeclareMathOperator{\Root}{Root}
\DeclareMathOperator{\supp}{supp}
\newcommand{\cT}{\mathcal T}
\DeclareMathOperator{\Ver}{Ver}
\newcommand{\Z}{\mathbf{Z}}
\newcommand{\0}{{\tt0}}
\newcommand{\1}{{\tt1}}
\newcommand{\UncolouredCaret}{
	\begin{tikzpicture}
		\draw[thick] (0,0) -- (-0.5,0.75);
		\draw[thick] (0,0) -- (0.5,0.75);
		\draw[black,fill=black,thick] (0,0) circle [radius=0.06];
		\draw[fill=black] (-0.5,0.75) circle [radius=0.06];
		\draw[fill=black] (0.5,0.75) circle [radius=0.06];
	\end{tikzpicture}  
}
\newcommand{\UncolouredVineA}{
	\begin{tikzpicture}
		\draw[thick] (0,0) -- (-0.5,0.75);
		\draw[thick] (0,0) -- (0.5,0.75);
		\draw[thick] (0.5,0.75) -- (1,1.5);
		\draw[thick] (0.5,0.75) -- (0,1.5);
		\draw[black,fill=black,thick] (0,0) circle [radius=0.06];
		\draw[fill=black] (0.5,0.75) circle [radius=0.06];
		\draw[fill=black] (1,1.5) circle [radius=0.06];
		\draw[fill=black] (0,1.5) circle [radius=0.06];
		\draw[fill=black] (-0.5,0.75) circle [radius=0.06];
	\end{tikzpicture}  
}
\newcommand{\UncolouredVineB}{
	\begin{tikzpicture}
		\draw[thick] (0,0) -- (-0.5,0.75);
		\draw[thick] (0,0) -- (0.5,0.75);
		\draw[thick] (0.5,0.75) -- (1,1.5);
		\draw[thick] (0.5,0.75) -- (0,1.5);
		\draw[thick] (1,1.5) -- (1.5,2.25);
		\draw[thick] (1,1.5) -- (0.5,2.25);
		\draw[black,fill=black,thick] (0,0) circle [radius=0.06];
		\draw[fill=black] (0.5,0.75) circle [radius=0.06];
		\draw[fill=black] (1,1.5) circle [radius=0.06];
		\draw[fill=black] (1.5,2.25) circle [radius=0.06];
		\draw[fill=black] (0.5,2.25) circle [radius=0.06];
		\draw[fill=black] (0,1.5) circle [radius=0.06];
		\draw[fill=black] (-0.5,0.75) circle [radius=0.06];
	\end{tikzpicture}  
}
\newcommand{\ClearyThree}{
\begin{tikzpicture}
\draw[thick] (0+3,0) -- (-0.5+3,0.75);
\draw[thick] (0+3,0) -- (0.5+3,0.75);
\draw[thick] (0.5+3,0.75) -- (1+3,1.5);
\draw[thick] (0.5+3,0.75) -- (0+3,1.5);
\draw[thick] (1+3,1.5) -- (3.5,2.25);
\draw[thick] (1+3,1.5) -- (4.5,2.25);
\draw[black,fill=white,thick] (0+3,0) circle [radius=0.25] node {$b$};
\draw[black,fill=white,thick] (0.5+3,0.75) circle [radius=0.25] node {$b$};
\draw[black,fill=white,thick] (1+3,1.5) circle [radius=0.25] node {$b$};
\draw[fill=black] (3.5,2.25) circle [radius=0.06];
\draw[fill=black] (4.5,2.25) circle [radius=0.06];
\draw[fill=black] (0+3,1.5) circle [radius=0.06];
\draw[fill=black] (-0.5+3,0.75) circle [radius=0.06];
\node at (1.5,0) {$\sim$};
\draw[thick] (0,0) -- (-0.5,0.75);
\draw[thick] (0,0) -- (0.5,0.75);
\draw[thick] (-0.5,0.75) -- (-1,1.5);
\draw[thick] (-0.5,0.75) -- (0,1.5);
\draw[thick] (-1,1.5) -- (-1.5,2.25);
\draw[thick] (-1,1.5) -- (-0.5,2.25);
\draw[black,fill=white,thick] (0,0) circle [radius=0.25] node {$a$};
\draw[black,fill=white,thick] (-0.5,0.75) circle [radius=0.25] node {$a$};
\draw[black,fill=white,thick] (-1,1.5) circle [radius=0.25] node {$a$};
\draw[fill=black] (0,1.5) circle [radius=0.06];
\draw[fill=black] (0.5,0.75) circle [radius=0.06];
\draw[fill=black] (-1.5,2.25) circle [radius=0.06];
\draw[fill=black] (-0.5,2.25) circle [radius=0.06];
\end{tikzpicture}  
}
\newcommand{\ClearyB}{
\begin{tikzpicture}
\draw[thick] (0+3,0) -- (-0.5+3,0.75);
\draw[thick] (0+3,0) -- (0.5+3,0.75);
\draw[thick] (0.5+3,0.75) -- (1+3,1.5);
\draw[thick] (0.5+3,0.75) -- (0+3,1.5);
\draw[black,fill=white,thick] (0+3,0) circle [radius=0.25] node {$b$};
\draw[black,fill=white,thick] (0.5+3,0.75) circle [radius=0.25] node {$b$};
\draw[fill=black] (1+3,1.5) circle [radius=0.06];
\draw[fill=black] (0+3,1.5) circle [radius=0.06];
\draw[fill=black] (-0.5+3,0.75) circle [radius=0.06];
\node at (1.5,0) {$\sim$};
\draw[thick] (0,0) -- (-0.5,0.75);
\draw[thick] (0,0) -- (0.5,0.75);
\draw[thick] (-0.5,0.75) -- (-1,1.5);
\draw[thick] (-0.5,0.75) -- (0,1.5);
\draw[black,fill=white,thick] (0,0) circle [radius=0.25] node {$a$};
\draw[black,fill=white,thick] (-0.5,0.75) circle [radius=0.25] node {$a$};
\draw[fill=black] (-1,1.5) circle [radius=0.06];
\draw[fill=black] (0,1.5) circle [radius=0.06];
\draw[fill=black] (0.5,0.75) circle [radius=0.06];
\end{tikzpicture}  
}
\newcommand{\CompleteABC}{
\begin{tikzpicture}
\draw[fill=black] (0,0) circle [radius=0.05];
\draw[thick] (0,0) -- (-0.5,0.5);
\draw[thick] (0,0) -- (0.5,0.5);
\draw[thick] (-0.5,0.5) -- (-0.75,1);
\draw[thick] (-0.5,0.5) -- (-0.25,1);
\draw[thick] (0.5,0.5) -- (0.75,1);
\draw[thick] (0.5,0.5) -- (0.25,1);
\draw[fill=black] (-0.75,1) circle [radius=0.05];
\draw[fill=black] (-0.25,1) circle [radius=0.05];
\draw[fill=black] (0.75,1) circle [radius=0.05];
\draw[fill=black] (0.25,1) circle [radius=0.05];
\draw[black,fill=white,thick] (0,0) circle [radius=0.25] node {$a$};
\draw[black,fill=white,thick] (-0.5,0.5) circle [radius=0.25] node {$b$};
\draw[black,fill=white,thick] (0.5,0.5) circle [radius=0.25] node {$c$};
\end{tikzpicture}
}
\newcommand{\SkeinPresC}{
\begin{tikzpicture}
\draw[thick] (-1.875,0.875)--(-1.5,1.625);
\draw[thick] (-1.875,0.875)--(-1.5,0.125);
\draw[fill=black] (-1,0.5) circle [radius=0.05];
\draw[thick] (-1,0.5) -- (-1.25,1);
\draw[thick] (-1,0.5) -- (-0.75,1);
\draw[fill=black] (-1.25,1) circle [radius=0.05];
\draw[fill=black] (-0.75,1) circle [radius=0.05];
\draw[black,fill=white,thick] (-1,0.5) circle [radius=0.25] node {$1$};
\node at (-0.25,0.25) {$,$};
\draw[fill=black] (0.5,0.5) circle [radius=0.05];
\draw[thick] (0.5,0.5) -- (0.25,1);
\draw[thick] (0.5,0.5) -- (0.75,1);
\draw[fill=black] (0.25,1) circle [radius=0.05];
\draw[fill=black] (0.75,1) circle [radius=0.05];
\draw[black,fill=white,thick] (0.5,0.5) circle [radius=0.25] node {$2$};
\node at (1.5,0.25) {$,\dots,$};
\draw[fill=black] (2.5,0.5) circle [radius=0.05];
\draw[thick] (2.5,0.5) -- (2.25,1);
\draw[thick] (2.5,0.5) -- (2.75,1);
\draw[fill=black] (2.25,1) circle [radius=0.05];
\draw[fill=black] (2.75,1) circle [radius=0.05];
\draw[black,fill=white,thick] (2.5,0.5) circle [radius=0.25] node {$n$};
\draw[thick] (3.25,1.625) -- (3.25,0.125);
\draw[fill=black] (4.25,0.5) circle [radius=0.05];
\draw[thick] (4.25,0.5) -- (3.75,1);
\draw[thick] (4.25,0.5) -- (4.75,1);
\draw[thick] (4.75,1) -- (4.5,1.5);
\draw[thick] (4.75,1) -- (5,1.5);
\draw[fill=black] (3.75,1) circle [radius=0.05];
\draw[fill=black] (4.5,1.5) circle [radius=0.05];
\draw[fill=black] (5,1.5) circle [radius=0.05];
\draw[black,fill=white,thick] (4.25,0.5) circle [radius=0.25] node {$i$};
\draw[black,fill=white,thick] (4.75,1) circle [radius=0.25] node {$j$};
\node at (5.25,0.5) {$=$};
\draw[fill=black] (6.25,0.5) circle [radius=0.05];
\draw[thick] (6.25,0.5) -- (5.75,1);
\draw[thick] (6.25,0.5) -- (6.75,1);
\draw[thick] (5.75,1) -- (5.5,1.5);
\draw[thick] (5.75,1) -- (6,1.5);
\draw[fill=black] (6.75,1) circle [radius=0.05];
\draw[fill=black] (5.5,1.5) circle [radius=0.05];
\draw[fill=black] (6,1.5) circle [radius=0.05];
\draw[black,fill=white,thick] (6.25,0.5) circle [radius=0.25] node {$j$};
\draw[black,fill=white,thick] (5.75,1) circle [radius=0.25] node {$i$};
\node at (8.25,0.5) {$:~0\leqslant i<j\leqslant n$};
\draw[thick] (10.25,0.875)--(9.875,1.625);
\draw[thick] (10.25,0.875)--(9.875,0.125);
\end{tikzpicture}
}
\newcommand{\SkeinPresA}{
\begin{tikzpicture}
\draw[thick] (-1.875,0.875)--(-1.375,2);
\draw[thick] (-1.875,0.875)--(-1.375,-0.25);
\draw[fill=black] (-1,0.5) circle [radius=0.05];
\draw[thick] (-1,0.5) -- (-1.25,1);
\draw[thick] (-1,0.5) -- (-0.75,1);
\draw[fill=black] (-1.25,1) circle [radius=0.05];
\draw[fill=black] (-0.75,1) circle [radius=0.05];
\draw[black,fill=white,thick] (-1,0.5) circle [radius=0.25] node {$a$};
\node at (-0.25,0.25) {$,$};
\draw[fill=black] (0.5,0.5) circle [radius=0.05];
\draw[thick] (0.5,0.5) -- (0.25,1);
\draw[thick] (0.5,0.5) -- (0.75,1);
\draw[fill=black] (0.25,1) circle [radius=0.05];
\draw[fill=black] (0.75,1) circle [radius=0.05];
\draw[black,fill=white,thick] (0.5,0.5) circle [radius=0.25] node {$b$};
\draw[thick] (1.25,2) -- (1.25,-0.25);
\draw[fill=black] (3,0) circle [radius=0.05];
\draw[thick] (3,0) -- (2.5,0.5);
\draw[thick] (3,0) -- (3.5,0.5);
\draw[thick] (2.5,0.5) -- (1.75,2);
\draw[thick] (2.5,0.5) -- (3.25,2);
\draw[thick] (1.75,2) -- (3.25,2);
\draw[fill=black] (1.75,2) circle [radius=0.05];
\draw[fill=black] (3.25,2) circle [radius=0.05];
\draw[fill=black] (2.5,0.5) circle [radius=0.05];
\draw[fill=black] (3.5,0.5) circle [radius=0.05];
\draw[black,fill=white,thick] (2.5,1.5) node {$t(a)$};
\draw[black,fill=white,thick] (3,0) circle [radius=0.25] node {$a$};
\node at (4.75,0.25) {$=$};
\draw[thick] (6,0) -- (5.5,0.5);
\draw[thick] (6,0) -- (6.5,0.5);
\draw[thick] (6.5,0.5) -- (6,1);
\draw[thick] (6.5,0.5) -- (6.75,0.75);
\node[rotate=45] at (7,1) {$\cdots$};
\draw[thick] (7.5,1.5) -- (7.25,1.25);
\draw[thick] (7.5,1.5) -- (7,2);
\draw[thick] (7.5,1.5) -- (8,2);
\draw[fill=black] (5.5,0.5) circle [radius=0.05];
\draw[fill=black] (6,1) circle [radius=0.05];
\draw[fill=black] (7,2) circle [radius=0.05];
\draw[fill=black] (8,2) circle [radius=0.05];
\draw[black,fill=white,thick] (6,0) circle [radius=0.25] node {$b$};
\draw[black,fill=white,thick] (6.5,0.5) circle [radius=0.25] node {$b$};
\draw[black,fill=white,thick] (7.5,1.5) circle [radius=0.25] node {$b$};
\draw[thick] (8.75,0.875)--(8.25,2);
\draw[thick] (8.75,0.875)--(8.25,-0.25);
\end{tikzpicture}
}
\newcommand{\SkeinPresB}{
\begin{tikzpicture}
\draw[thick] (-1.875,0.875)--(-1.375,2);
\draw[thick] (-1.875,0.875)--(-1.375,-0.25);
\draw[fill=black] (-1,0.5) circle [radius=0.05];
\draw[thick] (-1,0.5) -- (-1.25,1);
\draw[thick] (-1,0.5) -- (-0.75,1);
\draw[fill=black] (-1.25,1) circle [radius=0.05];
\draw[fill=black] (-0.75,1) circle [radius=0.05];
\draw[black,fill=white,thick] (-1,0.5) circle [radius=0.25] node {$a$};
\node at (-0.25,0.25) {$,$};
\draw[fill=black] (0.5,0.5) circle [radius=0.05];
\draw[thick] (0.5,0.5) -- (0.25,1);
\draw[thick] (0.5,0.5) -- (0.75,1);
\draw[fill=black] (0.25,1) circle [radius=0.05];
\draw[fill=black] (0.75,1) circle [radius=0.05];
\draw[black,fill=white,thick] (0.5,0.5) circle [radius=0.25] node {$b$};
\draw[thick] (1.25,2) -- (1.25,-0.25);
\draw[fill=black] (3,0) circle [radius=0.05];
\draw[thick] (3,0) -- (2.5,0.5);
\draw[thick] (3,0) -- (3.5,0.5);
\draw[thick] (2.5,0.5) -- (1.75,2);
\draw[thick] (2.5,0.5) -- (3.25,2);
\draw[thick] (1.75,2) -- (3.25,2);
\draw[thick] (3.5,0.5) -- (3.75,1);
\draw[thick] (3.5,0.5) -- (3.25,1);
\draw[fill=black] (1.75,2) circle [radius=0.05];
\draw[fill=black] (3.25,2) circle [radius=0.05];
\draw[fill=black] (2.5,0.5) circle [radius=0.05];
\draw[fill=black] (3.75,1) circle [radius=0.05];
\draw[fill=black] (3.25,1) circle [radius=0.05];
\draw[black,fill=white,thick] (2.5,1.5) node {$t(a)$};
\draw[black,fill=white,thick] (3,0) circle [radius=0.25] node {$a$};
\draw[black,fill=white,thick] (3.5,0.5) circle [radius=0.25] node {$a$};
\node at (4.75,0.25) {$=$};
\draw[thick] (6,0) -- (5.5,0.5);
\draw[thick] (6,0) -- (6.5,0.5);
\draw[thick] (6.5,0.5) -- (6,1);
\draw[thick] (6.5,0.5) -- (6.75,0.75);
\node[rotate=45] at (7,1) {$\cdots$};
\draw[thick] (7.5,1.5) -- (7.25,1.25);
\draw[thick] (7.5,1.5) -- (7,2);
\draw[thick] (7.5,1.5) -- (8,2);
\draw[fill=black] (5.5,0.5) circle [radius=0.05];
\draw[fill=black] (6,1) circle [radius=0.05];
\draw[fill=black] (7,2) circle [radius=0.05];
\draw[fill=black] (8,2) circle [radius=0.05];
\draw[black,fill=white,thick] (6,0) circle [radius=0.25] node {$b$};
\draw[black,fill=white,thick] (6.5,0.5) circle [radius=0.25] node {$b$};
\draw[black,fill=white,thick] (7.5,1.5) circle [radius=0.25] node {$b$};
\draw[thick] (8.75,0.875)--(8.25,2);
\draw[thick] (8.75,0.875)--(8.25,-0.25);
\end{tikzpicture}
}
\newcommand{\FigA}{
\begin{tikzpicture}
\draw[fill=black] (0,0) circle [radius=0.05];
\draw[thick] (0,0) -- (-0.5,0.5);
\draw[thick] (0,0) -- (0.5,0.5);
\draw[fill=black] (-0.5,0.5) circle [radius=0.05];
\draw[thick] (-0.5,0.5) -- (-0.75,1);
\draw[thick] (-0.5,0.5) -- (-0.25,1);
\draw[thick] (0.5,0.5) -- (0.25,1);
\draw[thick] (0.5,0.5) -- (0.75,1);
\draw[fill=black] (-0.75,1) circle [radius=0.05];
\draw[fill=black] (0.75,1) circle [radius=0.05];
\draw[fill=black] (-0.25,1) circle [radius=0.05];
\draw[fill=black] (0.25,1) circle [radius=0.05];
\draw[fill=black] (0.5,0.5) circle [radius=0.05];
\end{tikzpicture}
}
\newcommand{\FigB}{
\begin{tikzpicture}
\draw[fill=black] (0,0) circle [radius=0.05];
\draw[thick] (0,0) -- (-0.5,0.5);
\draw[thick] (0,0) -- (0.5,0.5);
\draw[fill=black] (-0.5,0.5) circle [radius=0.05];
\draw[thick] (-0.5,0.5) -- (-1,1);
\draw[thick] (-0.5,0.5) -- (0,1);
\draw[fill=black] (-1,1) circle [radius=0.05];
\draw[fill=black] (0,1) circle [radius=0.05];
\draw[fill=black] (0.5,0.5) circle [radius=0.05];
\draw[fill=black] (2,0) circle [radius=0.05];
\draw[thick] (2,0) -- (1.5,0.5);
\draw[thick] (2,0) -- (2.5,0.5);
\draw[fill=black] (1.5,0.5) circle [radius=0.05];
\draw[fill=black] (2.5,0.5) circle [radius=0.05];
\end{tikzpicture}
}
\newcommand{\FigC}{
\begin{tikzpicture}
\draw[fill=black] (0,0) circle [radius=0.05];
\draw[thick] (0,0) -- (-0.5,0.5);
\draw[thick] (0,0) -- (0.5,0.5);
\draw[fill=black] (-0.5,0.5) circle [radius=0.05];
\draw[thick] (-0.5,0.5) -- (-1,1);
\draw[thick] (-0.5,0.5) -- (0,1);
\draw[fill=black] (0,1) circle [radius=0.05];
\draw[thick] (0,1) -- (-0.5,1.5);
\draw[thick] (0,1) -- (0.5,1.5);
\draw[fill=black] (-0.5,1.5) circle [radius=0.05];
\draw[fill=black] (0.5,1.5) circle [radius=0.05];
\draw[fill=black] (-1,1) circle [radius=0.05];
\draw[fill=black] (0.5,0.5) circle [radius=0.05];
\end{tikzpicture}
}
\newcommand{\FigD}{
\begin{tikzpicture}
\draw[fill=black] (0,0) circle [radius=0.05];
\draw[thick] (0,0) -- (-0.5,0.5);
\draw[thick] (0,0) -- (0.5,0.5);
\draw[fill=black] (-0.5,0.5) circle [radius=0.05];
\draw[thick] (-0.5,0.5) -- (-1,1);
\draw[thick] (-0.5,0.5) -- (0,1);
\draw[fill=black] (-1,1) circle [radius=0.05];
\draw[fill=black] (0,1) circle [radius=0.05];
\draw[fill=black] (0.5,0.5) circle [radius=0.05];
\draw[fill=black] (2,0) circle [radius=0.05];
\draw[thick] (2,0) -- (1.5,0.5);
\draw[thick] (2,0) -- (2.5,0.5);
\draw[fill=black] (1.5,0.5) circle [radius=0.05];
\draw[fill=black] (2.5,0.5) circle [radius=0.05];
\draw[black,fill=white,thick] (0,0) circle [radius=0.25] node {$a$};
\draw[black,fill=white,thick] (-0.5,0.5) circle [radius=0.25] node {$b$};
\draw[black,fill=white,thick] (2,0) circle [radius=0.25] node {$c$};
\end{tikzpicture}
}
\newcommand{\Composition}{
\begin{tikzpicture}
\draw[fill=black] (0,0) circle [radius=0.05];
\draw[thick] (0,0) -- (-0.5,0.5);
\draw[thick] (0,0) -- (0.5,0.5);
\draw[fill=black] (-0.5,0.5) circle [radius=0.05];
\draw[thick] (1,-0.2) -- (1,0.5);
\draw[fill=black] (1,-0.2) circle [radius=0.05];
\draw[fill=black] (1,0.5) circle [radius=0.05];
\draw[fill=black] (0.5,0.5) circle [radius=0.05];
\draw[black,fill=white,thick] (0,0) circle [radius=0.25] node {$a$};
\node at (1.75,0.15) {$\circ$};
\draw[thick] (2.5,-0.2) -- (2.5,0.5);
\draw[thick] (3,-0.2) -- (3,0.5);
\draw[fill=black] (2.5,-0.2) circle [radius=0.05];
\draw[fill=black] (2.5,0.5) circle [radius=0.05];
\draw[fill=black] (3,-0.2) circle [radius=0.05];
\draw[fill=black] (3,0.5) circle [radius=0.05];
\draw[fill=black] (4,0) circle [radius=0.05];
\draw[thick] (4,0) -- (3.5,0.5);
\draw[thick] (4,0) -- (4.5,0.5);
\draw[fill=black] (3.5,0.5) circle [radius=0.05];
\draw[fill=black] (4.5,0.5) circle [radius=0.05];
\draw[black,fill=white,thick] (4,0) circle [radius=0.25] node {$b$};
\node at (5,0.1) {$=$};
\draw[fill=black] (6,0) circle [radius=0.05];
\draw[thick] (6,0) -- (5.75,0.5);
\draw[thick] (6,0) -- (6.25,0.5);
\draw[thick] (7,-0.2) -- (7,0.5);
\draw[fill=black] (7,-0.2) circle [radius=0.05];
\draw[fill=black] (5.75,0.5) circle [radius=0.05];
\draw[fill=black] (6.25,0.5) circle [radius=0.05];
\draw[black,fill=white,thick] (6,0) circle [radius=0.25] node {$a$};
\draw[thick] (7,0.5) -- (6.75,1);
\draw[thick] (7,0.5) -- (7.25,1);
\draw[thick] (5.75,0.5) -- (5.75,1);
\draw[thick] (6.25,0.5) -- (6.25,1);
\draw[fill=black] (5.75,1) circle [radius=0.05];
\draw[fill=black] (6.25,1) circle [radius=0.05];
\draw[fill=black] (6.75,1) circle [radius=0.05];
\draw[fill=black] (7.25,1) circle [radius=0.05];
\draw[black,fill=white,thick] (7,0.5) circle [radius=0.25] node {$b$};
\node at (8,0.1) {$=$};
\draw[fill=black] (9,0) circle [radius=0.05];
\draw[thick] (9,0) -- (8.75,0.5);
\draw[thick] (9,0) -- (9.25,0.5);
\draw[fill=black] (8.75,0.5) circle [radius=0.05];
\draw[fill=black] (9.25,0.5) circle [radius=0.05];
\draw[black,fill=white,thick] (9,0) circle [radius=0.25] node {$a$};
\draw[thick] (10,0) -- (9.75,0.5);
\draw[thick] (10,0) -- (10.25,0.5);
\draw[fill=black] (9.75,0.5) circle [radius=0.05];
\draw[fill=black] (10.25,0.5) circle [radius=0.05];
\draw[black,fill=white,thick] (10,0) circle [radius=0.25] node {$b$};
\end{tikzpicture}
}
\newcommand{\ACaret}{
\begin{tikzpicture}
\draw[fill=black] (0,0) circle [radius=0.05];
\draw[thick] (0,0) -- (-0.5,0.5);
\draw[thick] (0,0) -- (0.5,0.5);
\draw[fill=black] (-0.5,0.5) circle [radius=0.05];
\draw[fill=black] (0.5,0.5) circle [radius=0.05];
\draw[black,fill=white,thick] (0,0) circle [radius=0.25] node {$a$};
\end{tikzpicture}
}
\newcommand{\TensorProd}{
\begin{tikzpicture}
\draw[fill=black] (0,0) circle [radius=0.05];
\draw[thick] (0,0) -- (-0.5,0.5);
\draw[thick] (0,0) -- (0.5,0.5);
\draw[fill=black] (-0.5,0.5) circle [radius=0.05];
\draw[thick] (1,-0.2) -- (1,0.5);
\draw[fill=black] (1,-0.2) circle [radius=0.05];
\draw[fill=black] (1,0.5) circle [radius=0.05];
\draw[fill=black] (0.5,0.5) circle [radius=0.05];
\draw[black,fill=white,thick] (0,0) circle [radius=0.25] node {$a$};
\node at (1.75,0.15) {$\ot$};
\draw[thick] (2.5,-0.2) -- (2.5,0.5);
\draw[thick] (3,-0.2) -- (3,0.5);
\draw[fill=black] (2.5,-0.2) circle [radius=0.05];
\draw[fill=black] (2.5,0.5) circle [radius=0.05];
\draw[fill=black] (3,-0.2) circle [radius=0.05];
\draw[fill=black] (3,0.5) circle [radius=0.05];
\draw[fill=black] (4,0) circle [radius=0.05];
\draw[thick] (4,0) -- (3.5,0.5);
\draw[thick] (4,0) -- (4.5,0.5);
\draw[fill=black] (3.5,0.5) circle [radius=0.05];
\draw[fill=black] (4.5,0.5) circle [radius=0.05];
\draw[black,fill=white,thick] (4,0) circle [radius=0.25] node {$b$};
\node at (5,0.1) {$=$};
\draw[thick] (6,0) -- (5.5,0.5);
\draw[thick] (6,0) -- (6.5,0.5);
\draw[fill=black] (5.5,0.5) circle [radius=0.05];
\draw[fill=black] (6.5,0.5) circle [radius=0.05];
\draw[thick] (7,-0.2) -- (7,0.5);
\draw[thick] (7.5,-0.2) -- (7.5,0.5);
\draw[thick] (8,-0.2) -- (8,0.5);
\draw[fill=black] (7,-0.2) circle [radius=0.05];
\draw[fill=black] (7,0.5) circle [radius=0.05];
\draw[fill=black] (7.5,-0.2) circle [radius=0.05];
\draw[fill=black] (7.5,0.5) circle [radius=0.05];
\draw[fill=black] (8,-0.2) circle [radius=0.05];
\draw[fill=black] (8,0.5) circle [radius=0.05];
\draw[black,fill=white,thick] (6,0) circle [radius=0.25] node {$a$};
\draw[thick] (9,0) -- (8.5,0.5);
\draw[thick] (9,0) -- (9.5,0.5);
\draw[fill=black] (8.5,0.5) circle [radius=0.05];
\draw[fill=black] (9.5,0.5) circle [radius=0.05];
\draw[black,fill=white,thick] (9,0) circle [radius=0.25] node {$b$};
\end{tikzpicture}
}
\newcommand{\SkeinRelation}{
\begin{tikzpicture}
\draw[fill=black] (0,0) circle [radius=0.05];
\draw[thick] (0,0) -- (-0.5,0.5);
\draw[thick] (0,0) -- (0.5,0.5);
\draw[thick] (-0.5,0.5) -- (-0.75,1);
\draw[thick] (-0.5,0.5) -- (-0.25,1);
\draw[thick] (0.5,0.5) -- (0.75,1);
\draw[thick] (0.5,0.5) -- (0.25,1);
\draw[fill=black] (-0.75,1) circle [radius=0.05];
\draw[fill=black] (-0.25,1) circle [radius=0.05];
\draw[fill=black] (0.75,1) circle [radius=0.05];
\draw[fill=black] (0.25,1) circle [radius=0.05];
\draw[black,fill=white,thick] (0,0) circle [radius=0.25] node {$a$};
\draw[black,fill=white,thick] (-0.5,0.5) circle [radius=0.25] node {$a$};
\draw[black,fill=white,thick] (0.5,0.5) circle [radius=0.25] node {$a$};
\node at (1.5,0) {$\sim$};
\draw[thick] (3,0) -- (2.5,0.5);
\draw[thick] (3,0) -- (3.5,0.5);
\draw[thick] (3.5,0.5) -- (3,1);
\draw[thick] (3.5,0.5) -- (4,1);
\draw[thick] (4,1) -- (3.5,1.5);
\draw[thick] (4,1) -- (4.5,1.5);
\draw[fill=black] (2.5,0.5) circle [radius=0.05];
\draw[fill=black] (3,1) circle [radius=0.05];
\draw[fill=black] (3.5,1.5) circle [radius=0.05];
\draw[fill=black] (4.5,1.5) circle [radius=0.05];
\draw[black,fill=white,thick] (3,0) circle [radius=0.25] node {$b$};
\draw[black,fill=white,thick] (3.5,0.5) circle [radius=0.25] node {$b$};
\draw[black,fill=white,thick] (4,1) circle [radius=0.25] node {$b$};
\end{tikzpicture}
}
\newcommand{\Cabelling}{
\begin{tikzpicture}
\node at (-2.5,-0.75) {$(12)\circ(Y_a\ot I\ot I)=$};
\node at (7.5,-0.75) {$=(I\ot Y_a\ot I)\circ(123)$};
\draw[thick] (1,-1) -- (0,0);
\draw[thick] (0,-1) -- (1,0);
\draw[thick] (1.75,-1) -- (1.75,0);
\draw[fill=black] (0,-1) circle [radius=0.05];
\draw[fill=black] (1,-1) circle [radius=0.05];
\draw[fill=black] (1.75,-1) circle [radius=0.05];
\draw[fill=black] (0,0) circle [radius=0.05];
\draw[thick] (0,0) -- (-0.5,0.5);
\draw[thick] (0,0) -- (0.5,0.5);
\draw[fill=black] (-0.5,0.5) circle [radius=0.05];
\draw[thick] (1,0) -- (1,0.5);
\draw[thick] (1.75,0) -- (1.75,0.5);
\draw[fill=black] (1,0) circle [radius=0.05];
\draw[fill=black] (1,0.5) circle [radius=0.05];
\draw[fill=black] (1.75,0) circle [radius=0.05];
\draw[fill=black] (1.75,0.5) circle [radius=0.05];
\draw[fill=black] (0.5,0.5) circle [radius=0.05];
\draw[black,fill=white,thick] (0,0) circle [radius=0.25] node {$a$};
\node at (2.5,-0.75) {$=$};
\draw[thick] (3.25,-1) -- (3.25,-0.5);
\draw[thick] (4.25,-1) -- (3.75,-0.5);
\draw[thick] (4.25,-1) -- (4.75,-0.5);
\draw[thick] (5.25,-1) -- (5.25,-0.5);
\draw[fill=black] (3.25,-1) circle [radius=0.05];
\draw[fill=black] (3.25,-0.5) circle [radius=0.05];
\draw[fill=black] (3.75,-0.5) circle [radius=0.05];
\draw[fill=black] (4.75,-0.5) circle [radius=0.05];
\draw[fill=black] (5.25,-1) circle [radius=0.05];
\draw[fill=black] (5.25,-0.5) circle [radius=0.05];
\draw[thick] (3.75,-0.5) -- (3.25,0.5);
\draw[thick] (4.75,-0.5) -- (4.25,0.5);
\draw[thick] (3.25,-0.5) -- (4.75,0.5);
\draw[thick] (5.25,-0.5) -- (5.25,0.5);
\draw[fill=black] (3.25,0.5) circle [radius=0.05];
\draw[fill=black] (4.25,0.5) circle [radius=0.05];
\draw[fill=black] (4.75,0.5) circle [radius=0.05];
\draw[fill=black] (5.25,0.5) circle [radius=0.05];
\draw[black,fill=white,thick] (4.25,-1) circle [radius=0.25] node {$a$};
\end{tikzpicture}
}
\newcommand{\Cell}{
\begin{tikzpicture}
\draw[fill=black] (0,0) circle [radius=0.05];
\draw[thick] (0,0) -- (-0.5,0.5);
\draw[thick] (0,0) -- (0.5,0.5);
\draw[fill=black] (-0.5,0.5) circle [radius=0.05];
\draw[thick] (-0.5,0.5) -- (-1,1);
\draw[thick] (-0.5,0.5) -- (0,1);
\draw[fill=black] (-1,1) circle [radius=0.05];
\draw[fill=black] (0,1) circle [radius=0.05];
\draw[fill=black] (0.5,0.5) circle [radius=0.05];
\end{tikzpicture}
}
\newcommand{\InfiniteQuasiRightVine}{
\begin{tikzpicture}
\draw[fill=black] (0,0) circle [radius=0.05];
\draw[thick] (0,0) -- (-0.5,0.5);
\draw[thick] (0,0) -- (1,1);
\draw[fill=black] (-0.5,0.5) circle [radius=0.05];
\draw[thick] (-0.5,0.5) -- (-1,1);
\draw[thick] (-0.5,0.5) -- (0,1);
\draw[fill=black] (-1,1) circle [radius=0.05];
\draw[fill=black] (0,1) circle [radius=0.05];
\draw[fill=black] (1,1) circle [radius=0.05];
\draw[thick] (1,1) -- (0.5,1.5);
\draw[thick] (1,1) -- (2,2);
\draw[fill=black] (0.5,1.5) circle [radius=0.05];
\draw[thick] (0.5,1.5) -- (0,2);
\draw[thick] (0.5,1.5) -- (1,2);
\draw[fill=black] (0,2) circle [radius=0.05];
\draw[fill=black] (1,2) circle [radius=0.05];
\draw[fill=black] (2,2) circle [radius=0.05];
\draw[thick] (2,2) -- (1.5,2.5);
\draw[thick] (2,2) -- (3,3);
\draw[fill=black] (1.5,2.5) circle [radius=0.05];
\draw[thick] (1.5,2.5) -- (1,3);
\draw[thick] (1.5,2.5) -- (2,3);
\draw[fill=black] (1,3) circle [radius=0.05];
\draw[fill=black] (2,3) circle [radius=0.05];
\node[rotate=45] at (3.25,3.25) {$\cdots$};
\end{tikzpicture}
}
\newcommand{\generators}{
\begin{tikzpicture}
\draw[fill=black] (0,0) circle [radius=0.05];
\draw[thick] (0,0) -- (-0.25,0.5);
\draw[thick] (0,0) -- (0.25,0.5);
\draw[fill=black] (-0.25,0.5) circle [radius=0.05];
\draw[fill=black] (0.25,0.5) circle [radius=0.05];
\draw[black,fill=white,thick] (0,0) circle [radius=0.25] node {$b$};
\draw[fill=black] (0,1) circle [radius=0.05];
\draw[thick] (0,1) -- (-0.25,0.5);
\draw[thick] (0,1) -- (0.25,0.5);
\draw[black,fill=white,thick] (0,1) circle [radius=0.25] node {$a$};
\node at (1.5,0.5) {and};
\draw[fill=black] (2.75,0.5) circle [radius=0.05];
\draw[fill=black] (3.5,0.5) circle [radius=0.05];
\draw[fill=black] (4.25,0.5) circle [radius=0.05];
\draw[fill=black] (5,0.5) circle [radius=0.05];
\draw[thick] (2.75,0.5) -- (3.875,-1);
\draw[thick] (5,0.5) -- (3.875,-1);
\draw[thick] (3.5,0.5) -- (4.25,-0.5);
\draw[thick] (4.25,0.5) -- (3.875,0);
\draw[black,fill=white,thick] (3.875,-1) circle [radius=0.25] node {$a$};
\draw[black,fill=white,thick] (4.25,-0.5) circle [radius=0.25] node {$a$};
\draw[black,fill=white,thick] (3.875,0) circle [radius=0.25] node {$b$};
\draw[thick] (5,0.5) -- (3.875,2);
\draw[thick] (4.25,0.5) -- (4.625,1);
\draw[thick] (3.5,0.5) -- (4.25,1.5);
\draw[thick] (2.75,0.5) -- (3.875,2);
\draw[black,fill=white,thick] (3.875,2) circle [radius=0.25] node {$a$};
\draw[black,fill=white,thick] (4.625,1) circle [radius=0.25] node {$a$};
\draw[black,fill=white,thick] (4.25,1.5) circle [radius=0.25] node {$a$};
\node at (5.25,0.385) {.};
\end{tikzpicture}
}
\begin{document}

\title[Reconstruction for simple FS groups]{McCleary--Rubin reconstruction for simple forest-skein groups
}

\thanks{
AB was supported by the Australian Research Council Grant DP200100067.\\
RS was supported by an Australian Government Research Training Program (RTP) Scholarship.}
\author{Arnaud Brothier and Ryan Seelig}
\address{Arnaud Brothier, University of Trieste, Department of Mathematics, via Valerio 12/1, 34127, Trieste, Italy and
School of Mathematics and Statistics, University of New South Wales, Sydney NSW 2052, Australia}
\email{arnaud.brothier@gmail.com\endgraf
\url{https://sites.google.com/site/arnaudbrothier/}}
\address{Ryan Seelig, School of Mathematics and Statistics, University of New South Wales, Sydney NSW 2052, Australia}
\email{ryanseelig1997@gmail.com\endgraf
\url{https://sites.google.com/view/ryanseelig/}}

\begin{abstract} 
A simple Ore forest-skein category produces three infinite groups analogous to Richard Thompson's groups $F,T,V$. 
We prove that reconstruction theorems of McCleary and Rubin apply to them: each of these groups encodes a canonical action by homeomorphisms. This provides powerful invariants that we use to distinguish infinitely many explicit simple groups that are finitely presented (of type $\tt F_\infty$).
\end{abstract}

\maketitle

\section*{Introduction}

This article aims to provide invariants to distinguish infinite simple groups. The simple groups considered in this article are built using the forest-skein framework. \textit{No prior knowledge on this framework is necessary for reading this article.}

\subsection*{Forest-skein groups} Inspired by deep connections uncovered by Vaughan Jones between Richard Thompson's group $F,T,V$, subfactors, and conformal field theory, the first author has introduced \emph{forest-skein (FS) groups} which are a class of ``Thompson-like'' groups which mix features of $F,T,V$ with Jones' planar algebra; \cite{Jones17,Brothier20,Brothier22}. 

To any set $S$ of \emph{colours} and set $R$ of \emph{skein relations} over $S$ (pair of binary trees with same number of leaves having interior vertices labelled by $S$) one may consider the FS category $\cF$ with \emph{skein presentation} $\la S|R\ra$. This consists of all binary forests with interior vertices coloured by $S$ modulo the skein relations in $R$. 
When $\cF$ admits a \emph{calculus of fractions} one can build groups $G^F\subset G^T\subset G^V$ made of tree-pair diagrams from $\cF$. They are similar to $F\subset T\subset V$, the latter corresponding to skein presentation $\la\ast|\varnothing\ra$ consisting of a single colour $\ast$ and no skein relations. 
We call these the \emph{FS groups} of $\cF$. The force of the FS formalism is the ability to produce interesting groups using very few data.
For instance, finitely presented infinite simple groups with unforseen properties were produced using a skein presentation with two colours and one skein relation; \cite{Brothier-Seelig26}.
Large families of explicit examples of FS groups have been studied in \cite{Brothier22,Brothier23,Brothier-Seelig24,Brothier-Seelig26}.

\subsection*{Simple groups and reconstruction theorems} 
Thompson's groups and their relatives are of great interest to geometric group theory as they have sourced many of the first examples of groups witnessing novel properties; see \cite{Cannon-Floyd-Parry96}. Notably, Thompson's groups $T$ and $V$ were the first examples of finitely presented infinite simple groups. 
A full classification of infinite simple groups remains far out of reach, and we are still today discovering which properties a simple group can satisfy.
It is a challenging problem to distinguish simple groups up to isomorphism, however there do exist powerful dynamical invariants.
The study of how much a group of symmetries remembers about the space it acts on goes back to the $1950$'s and arose originally in the context of reconstructing manifolds from their homeomorphism groups; see \cite{McCleary-Rubin05} for an historical account. More recently, both McCleary and Rubin have deduced mild conditions for a group action $\Gamma\act Z$ on some structure $Z$ (e.g.,~topological space, ordered space) to be \emph{rigid}, meaning that $\Gamma\act Z$ can be reconstructed using \emph{only} the algebraic data of $\Gamma$; 
\cite{McCleary78,Rubin89,McCleary-Rubin05}. Hence ``the'' rigid action of a group is an invariant. Brin and later Bleak and Lanoue used rigid actions to distinguish Brin's higher dimension Thompson groups; \cite{Brin04,Bleak-Lanoue10}.

\subsection*{FS technology produces simple groups}
Let $\cF$ be an Ore FS category (i.e.,~it is left-cancellative and have common right-multiples yielding a calculus of fractions) with FS groups $G^F\subset G^T\subset G^V$.
From $\cF$ the \emph{canonical group action} $\al:G^V\act\fC$ can be built functorially where $\fC$ is often a Cantor space.
We previously showed that faithfulness of $\al:G^V\act\fC$ is equivalent to simplicity of one (resp.~all) of the derived subgroups $D(G^V)$, $D(G^T)$, and $D(G^0)$, where $G^0\lhd G^F$ is the normal subgroup of $g\in\fC$ acting trivially near the extremities of $\fC$ (in previous articles $G^0$ has been denoted $K$); \cite{Brothier-Seelig24}.
If $\cF$ is moreover cancellative (on both sides), then $\al:G^V\act\fC$ is faithful if and only if $\cF$ satisfies a notion of \emph{simplicity}: adding a ``new'' skein relation destroys the calculus of fractions (see section \ref{sec:simplicity}). 
Every FS group with a faithful canonical group action can be realised as the FS group of a (possibly different) simple FS category as explained in \cite[Sec.~4.2]{Brothier-Seelig24}.
The simple groups $D(G^0),D(G^T),D(G^V)$ contains copies of $D(F),T,V$, respectively. This implies various properties of geometric, topological, and analytical natures; see \cite[Sec.~3.3]{Brothier22}. Note that $D(G^0)$ is never finitely generated while $D(G^T),D(G^V)$ are often finitely presented. 

\subsection*{McCleary--Rubin rigidity of FS groups} 
Our first main result is when the canonical group action $\al:G^V\act\fC$ is faithful, it is the rigid action for $G^V$ in the sense of the reconstruction theorems of McCleary and Rubin. 
Hence, not only is $G^V\act\fC$ canonical for the \emph{category} $\cF$, but it is canonical for the \emph{group} $G^V$. 
The restrictions of $G^V\act\fC$ to $G^T$ and $G^F$ are not rigid. This is easily rectified by forming certain obvious subquotients of $\fC$ that we denote $\fR^F$ and $\fR^T$.
For notational consistency we set $\fR^V:=\fC$. 

\begin{mainthm}[McCleary--Rubin rigidity of FS groups;~Theorem~\ref{theo:rigid-FS}]
Let $\cF$ be an Ore FS category with faithful canonical action and FS groups $G^F$, $G^T$, $G^V$. Then, $D(G^X)\act\fR^X$ (and hence $G^X\act\fR^X$) is rigid for $X=F,T,V$. When $\cF$ is countable these rigid spaces are homeomorphic to the real unit interval, the circle, and the Cantor space, respectively.
In particular, $G^X$ ``remembers'' its type $X$ in that if $G^X\simeq H^Y$ for some other FS group $H^Y$, then $X=Y$. 
\end{mainthm}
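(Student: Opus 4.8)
The plan is to verify that each action $[G^X,G^X]\act E^X$ satisfies the hypotheses of one of the reconstruction theorems of McCleary--Rubin, so that the action is rigid, and then to identify the underlying space $E^X$ topologically. Since the machinery has already been set up earlier in the paper --- we have the canonical action $\al:G^V\act\fC$, we know (from the simplicity hypothesis via Section~\ref{sec:simplicity}) that $\al$ is faithful and that $[G^V,G^V]$, $[G^T,G^T]$, $[K,K]$ are simple --- the work is to check the \emph{dynamical genericity / local movability} conditions. Concretely: for the Cantor-space case $X=V$, I would invoke Rubin's theorem, showing that $[G^V,G^V]$ acts on $\fC$ as a group of homeomorphisms that is \emph{locally dense} (for every nonempty open $U$ and every point $x\in U$, the closure of the orbit of $x$ under the rigid stabilizer $\RStab(U)$ has nonempty interior), or the stronger ``flexible'' hypothesis; the tree-pair-diagram description of $G^V$ makes this transparent because one can prescribe the action on an arbitrarily deep standard dyadic-type partition while fixing the complement, which is exactly what being built from forests buys us. For $X=T,V$ one also checks that the action is \emph{minimal} and has no isolated points, and for $X=F$ one uses the order structure: here $[G^F,G^F]\act E^F$ should be checked against McCleary's order-theoretic reconstruction (the ``$o$-primitive'', order-$n$-transitive type conditions) --- the $F$-type group acts on a totally ordered space, fixes the endpoints, and acts ``approximately $n$-transitively'' on interior points in an order-preserving way.

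The second half is the topological identification of $E^V=\fC$, $E^T$, $E^F$ when $\cF$ is countable. For $\fC$: it is by construction a totally ordered profinite (hence compact, totally disconnected, metrizable) space, and one needs it to be perfect --- no isolated points --- which again follows from simplicity of the category (if $\fC$ had an isolated point, the action could not be faithful/the relevant commutator subgroup could not be simple, or more directly the splitting maps always refine, so no block is minimal). A perfect, compact, totally disconnected, metrizable space is the Cantor set by Brouwer's characterization. For $E^F$: it is obtained as a quotient of $\fC$ (collapsing appropriately, per the discussion preceding the theorem) and carries a compatible total order realizing it as a compact connected linearly ordered space which is separable and has exactly two non-cut points (the collapsed endpoints), hence is order-isomorphic --- and homeomorphic --- to $[0,1]$. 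For $E^T$: the $T$-type construction wraps $E^F$ into a circle (a cyclically ordered compact space with the analogous genericity), so it is homeomorphic to $S^1$.

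The final clause --- ``$G^X$ remembers its type $X$'' --- is then a formal consequence: rigidity means the homeomorphism type of $E^X$ is an isomorphism invariant of the abstract group $[G^X,G^X]$ (and $G^X$ determines $[G^X,G^X]$), and $[0,1]$, $S^1$, and the Cantor set are pairwise non-homeomorphic, so no simple-Ore-FS group of one type can be isomorphic to one of another type. One should be slightly careful to phrase this as: the rigid space of $[G^X,G^X]$ is $E^X$, and since passing to the derived subgroup and then to its rigid space is purely algebraic, $G^X\cong G^{X'}$ would force $E^X\cong E^{X'}$.

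The main obstacle I expect is the \emph{local density / dynamical genericity verification} for Rubin's and McCleary's theorems --- one must pin down exactly which variant of the reconstruction hypotheses holds (locally dense vs.\ flexible vs.\ the order-transitivity hierarchy), and check it carefully from the forest-calculus description; in particular the $F$-type case is the most delicate, since one is working in the order category rather than the topological one, the endpoints are fixed, and one must confirm the action on the open interior meets McCleary's conditions (and that the derived subgroup, not just $G^F$ itself, still does). A secondary subtlety is ensuring the perfectness/no-isolated-points statements for $\fC$ and its quotients genuinely follow from the \emph{simplicity} of $\cF$ rather than needing an extra hypothesis --- this should be extractable from the equivalence ``simple category $\Leftrightarrow$ faithful canonical action $\Leftrightarrow$ simple commutator subgroups'' already established, but it is the place where the countability assumption and the structure of $\cF$ are actually used.
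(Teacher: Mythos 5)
Your proposal follows essentially the same route as the paper: check local density of $[G^V,G^V]\act\fC$ for Rubin's theorem in the $V$-case, check the order- and cyclic-order-transitivity hypotheses of McCleary and McCleary--Rubin in the $F$- and $T$-cases (with faithfulness supplied by simplicity of $\cF$ via the equivalence with faithfulness of the canonical action), then identify the rigid spaces using separability when $\cF$ is countable and conclude that the type is remembered because the three spaces are pairwise non-homeomorphic. Two small corrections. First, the $F$-type rigid space $E$ is the Dedekind completion of $(Q^*,\leqslant)$, a dense total order \emph{without endpoints} (the extremities $o,\omega$ of $\fC$ are deleted before gluing jumps), so in the countable case it is the \emph{open} unit interval rather than $[0,1]$; this is not cosmetic, since McCleary's reconstruction theorem as invoked here requires a space without extremities, and the non-trivial bounded element it demands is most naturally found in the subgroup $K$ of elements acting trivially near $o$ and $\omega$ (whence the paper verifies $2$-o-transitivity for $[K,K]$ and passes to the overgroup $[G^F,G^F]$ afterwards). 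Second, perfectness of $\fC$ is automatic from the fact that every cone can be properly subdivided by gluing a caret; it does not depend on simplicity of $\cF$, which enters only through faithfulness.
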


The hypothesis of countability cannot be removed (see item (5) of remark \ref{rem:rigid-non-homeo}).
For the $F$ and $T$-case we have stronger statements showing that the group remembers an order and a cyclic order, respectively (see section \ref{sec:rigid-FS} for details).

We use the rigid actions to deduce invariants to distinguish explicit simple groups.

\subsection*{Examples and classification} For a natural number $n\geqslant1$ and a non-trivial uncoloured tree $t$ define the FS categories:

\begin{align*}\label{eq:skein-presC}
\tag{$H$}\cH_n:=\FS\vcenter{\hbox{\SkeinPresC}};
\end{align*}
\begin{align*}\label{eq:skein-presA}
\tag{$G$}\cG_t:=\FS\vcenter{\hbox{\SkeinPresA}};
\end{align*}
\begin{align*}\label{eq:skein-presB}
\tag{$J$}\cJ_t:=\FS\vcenter{\hbox{\SkeinPresB}}.
\end{align*}
For the presentations of type (\ref{eq:skein-presA}) and (\ref{eq:skein-presB}) the triangular cell labelled $t(a)$ is \emph{any} non-trivial tree with all vertices coloured by $a$ and the $b$-tree represents a \emph{right-vine} having the same number of leaves as the tree on the left (e.g.,~if $t$ is a single caret, then $\cG_t$ produces the Cleary's golden ratio slope Thompson group).
It was previously proved that these FS categories are Ore and have faithful canonical action.
For each $X\in\{F,T,V\}$ we set $H_n^X,G_t^X,J_t^X$ the FS groups associated to $\cH_n,\cG_t,\cJ_t$, respectively, and write $J^0_t$ for the subgroup of $g\in J_t^F$ acting trivially near the extremities of $\fC$. In \cite{Brothier-Seelig24} we have obtained the somewhat surprising result that the (binary) $F$-type FS group $H_n^F$ is isomorphic to $F_n$: the $F$-type $n$-ary Higman--Thompson group; \cite{Higman74,Brown87}. 
We plan to investigate in the future the relationship between $H_n^T,H_n^V$ and $T_n,V_n$.
Section \ref{sec:isomorphism-problem} is dedicated to proving the following.

\begin{mainthm}[classification of explicit simple groups;~section \ref{sec:isomorphism-problem}]\label{mainthm:B}
Let $\cE$ be the set of derived subgroups of $H_n^F$, $H_n^T$, $H_n^V$, $G_t^F$, $G_t^T$, $G_t^V$, $J_t^0$, $J_t^T$, $J_t^V$ for $t$ any non-trivial uncoloured tree and for $n\geqslant 2$.
Say that the derived subgroups $D(H_n^F)$, $D(G_t^F)$, $D(J_t^0)$ are of type $F$, $D(H_n^T)$, $D(G_t^T)$, $D(J_t^T)$ are of type $T$, and $D(H_n^V),D(G_t^V),D(J_t^V)$ are of type $V$. The following hold.
\begin{enumerate}
\item \textnormal{(Simplicity and finiteness properties).} All the groups in $\cE$ are simple. The groups $D(G_t^T),D(G_t^V),D(J_t^T),D(J_t^V)$ are finitely presented and in fact have the topological finiteness property ${\tt F}_\infty$. On the other hand the groups $D(H_n^F)$, $D(G_t^F)$, $D(J_t^0)$ are not finitely generated. 
\item \textnormal{(Type is remembered).} If $A,B\in\cE$ and $A\simeq B$, then $A$ and $B$ have same type.
\item \textnormal{(Three distinct families for $T$ and $V$-type).} If $X=T$ or $V$, then the three groups $D(H_n^X),D(G_t^X),D(J_s^X)$ are all pairwise non-isomorphic for all choices of $n,t,s.$
\item \textnormal{(Three distinct families for $F$-type).} The three groups $D(H_n^F),D(G_t^F),D(J_s^0)$ are all pairwise non-isomorphic for all choices of $n,t,s.$
\item \textnormal{(Parameter is remembered for $H_n^Y$).} If $Y=F,T,V$, then $D(H_n^Y)\simeq D(H_m^Y)$ implies $n=m.$ 
\item \textnormal{(Parameter is remembered for $G^X_t$ and $J^X_t$).} If $X=T,V$ and $D(G_t^X)\simeq D(G_s^X)$ (resp.~$D(J_t^X)\simeq D(J_s^X)$), then $t,s$ have same number of leaves.
Similar statements hold for the families of groups $D(G_t^F)$ and $D(J_t^0).$
\end{enumerate}
\end{mainthm}
Note that $D(H_n^F)=D(H_n^0)$ and $D(G_t^F)=D(G_t^0)$ while $D(J_t^0)\neq D(J_t^F)$. This explains the statements regarding $F$-type groups.

\subsection*{Acknowledgements}
We warmly thank Matt Brin, Christian de Nicola Larsen, Yash Lodha, Dilshan Wijesena, and Matt Zaremsky for their constructive comments and pertinent questions on a previous version of this article. 

\section{Preliminaries}\label{sec:prelim}

We give a self contained overview of the forest-skein (FS) formalism. 
We invite the interested reader to consult \cite{Brothier22,Brothier23,Brothier-Seelig24} for further details. 
To help the reader we have added an index of notations at the end of this paper.

\subsection{Trees and forests} Let $\{\0,\1\}^*$ be the monoid of all words in $\0$ and $\1$ with concatenation $(u,v)\mapsto u\cdot v$ and empty word $\varnothing$. A \emph{(uncoloured)} \emph{tree} is a finite rooted full binary tree and an $n$-rooted \emph{(uncoloured)} \emph{forest} is an $n$-tuple of trees. The vertices of a tree are parametrised by words in $\{\0,\1\}^*$. The \emph{children} of a vertex $\nu\in\{\0,\1\}^*$ are the vertices $\nu\cdot\0$ and $\nu\cdot\1$. The \emph{leaves} of a tree $t$ are those vertices whose children are not contained in $t$ and all other vertices of $t$ are called \emph{interior}. We write $\Leaf(t)$ and $\Ver(t)$ for these sets. The lexiocographical order on $\{\0,\1\}^*$ (with $\0<\1$) is a total order, so the leaf set of any tree is totally ordered. For a tree $t$ with $n$ leaves we write
\begin{align}\label{eq:address-map}
\ell_t:\{1,2,\dots,n\}\to\Leaf(t), \ i\mapsto\ell_t(i)
\end{align}
for the unique order-preserving bijection, and we often identify these two sets via $\ell_t$. We say $\ell_t(i)$ is the \emph{address} of the $i$th leaf of $t$. The notion of leaves and interior vertices naturally extend to forests by taking disjoint unions, hence we often talk of the number of leaves of a forest, which is the sum of the number of leaves of its trees. For each word $\nu\in\{\0,\1\}^*$ there is a unique smallest tree which contains $\nu$ as a leaf, which is called the \emph{narrow tree containing $\nu$}.
Given a set $S$ of \emph{colours} we may form \emph{coloured trees over $S$} by equipping a tree $t$ with a \emph{colour function} $c:\Ver(t)\to S$, which labels the interior vertices of $t$ with colours from $S$. We may then form \emph{coloured forests over $S$} by taking finite length lists of coloured trees over $S$. 

\begin{figure}[H]
\centering
\vspace{0.5cm}
\begin{subfigure}{.4\linewidth}
    \centering
    \FigA
    \caption{}
    \label{fig:FigA}
\end{subfigure}
\begin{subfigure}{.4\linewidth}
    \centering
    \FigB
    \caption{}
    \label{fig:FigB}
\end{subfigure} 
\begin{subfigure}{.4\linewidth}
    \vspace{0.75cm}
    \centering
    \FigC
    \caption{}
    \label{fig:FigC}
\end{subfigure}
\begin{subfigure}{.4\linewidth}
  \vspace{0.75cm}
  \centering
  \FigD
  \caption{}
  \label{fig:FigD}
\end{subfigure} 
\caption{(\subref{fig:FigA}) An uncoloured tree. (\subref{fig:FigB}) An uncoloured forest. (\subref{fig:FigC}) The narrow tree containing $\0\1\0$. (\subref{fig:FigD}) A coloured forest over $\{a,b,c\}$.}
\label{fig:images}
\end{figure}

The tree with one leaf is called the \emph{trivial tree} and is denoted by $I$. It admits no colour functions as $\Ver(I)$ is empty. The unique tree with two leaves $Y$ is called the \emph{caret}. The caret has a unique interior vertex which may be coloured. We write $Y_a$ for the caret whose interior vertex is coloured $a$. Diagrammatically
\begin{align*}
Y_a=\vcenter{\hbox{\ACaret}}.
\end{align*}
We will also make use of left and right \emph{vines} which are the trees defined inductively by starting with a caret and gluing carets to the first leaf and last leaf, respectively. For example, here are the first three (uncoloured) right vines:
\begin{align}\label{eq:vines}
\rho_1:=\UncolouredCaret,~\rho_2:=\UncolouredVineA,~\textnormal{and}~\rho_3:=\UncolouredVineB.
\end{align}

\subsection{Forest-skein categories} Let $\FS\la S\ra$ be the set of all forests coloured over $S$ and write $f:r\leftarrow l$ if $f\in\FS\la S\ra$ is a forest with $r$ roots and $l$ leaves. 

\subsubsection{Vertical concatentation} If $f:m\leftarrow l$ and $g:l\leftarrow n$, then we may form a new forest $f\circ g:m\leftarrow n$ by gluing the $i$th tree of $g$ to the $i$th leaf of $f$ for all $1\leqslant i\leqslant l$ and reducing.

\begin{figure}[H]
\centering
\Composition\end{figure}

We often write $fg$ instead of $f\circ g$. 
Moreover, if we write $fg$ without qualifying number of roots and leaves of these forests, we are implicitly assuming they are composable.

\subsubsection{Horizontal concatenation} Given \emph{any} two forests $p:k\leftarrow l$ and $q:m\leftarrow n$ we form their \emph{tensor product} $p\ot q:k+m\leftarrow l+n$ which is the concatenation of their lists of trees.

\begin{figure}[H]
\centering
\TensorProd
\end{figure}

By setting the source (resp.~target) of $f:r\leftarrow l$ to be $l$ (resp.~$r$) yields a (small strict) monoidal category $(\FS\la S\ra,\circ,\ot)$ that we call \emph{free FS category over $S$}. 

\subsubsection{Skein relations} A \emph{skein relation over $S$} is a pair of trees $(u,v)$ (often written $u=v$ or $u\sim v$) coloured over $S$ having the same number of leaves. 

\begin{figure}[H]
\centering
\SkeinRelation
\caption{A skein relation over $\{a,b\}$.
}
\label{fig:skein-relation}
\end{figure}

\subsubsection{Forest-skein category} Given a set $R$ of skein relations over $S$ we write $\ov{R}$ for the smallest equivalence relation containing $R$ on $\FS\la S\ra$ closed under $\circ$ and $\ot$.
Hence, if $(x,y)\in \ov R$, then $(f\circ x\circ g,f\circ y\circ g)\in \ov R$ for all $f,g$ composable with $x$ and moreover $(p\ot x \ot q, p\ot y \ot q)\in \ov R$ for all forests $p,q.$
The quotient $$\FS\la S|R\ra:=\FS\la S\ra/\ov{R}$$ inherits the structure of a monoidal category from $\FS\la S\ra$ with the source and target of $[f]$ being the number of leaves and number of roots of $f$, respectively, and
\begin{align*}
[f]\circ[g]:=[f\circ g]\quad\textnormal{and}\quad[p]\ot[q]:=[p\ot q]
\end{align*}
where the number of leaves of $f$ equals the number of roots of $g$ and where $p$ and $q$ are any forests. Since skein relations do not change number of leaves this composition is well-defined.
Here $[f]$ is denoting the congruence class of $f$ in $\FS\la S|R\ra$. 
We usually just write $f$ instead of $[f]$ for a morphism in $\FS\la S|R\ra$ and also refer to it as a \emph{forest} and we may insist that a morphism in the free category $\FS\la S\ra$ is a \emph{free forest}.

\begin{definition}[forest-skein category]
We call $\FS\la S|R\ra$ the \emph{FS category with skein presentation $\la S|R\ra$}.
\end{definition}

We usually denote FS categories by calligraphic letters $\cF,\cG,\cH,\dots$ leaving an underlying skein presentation implicit. For a forest $f:r\leftarrow l$ in an FS category $\cF$ we write 
\begin{align*}
\Root(f):=\{1,\dots,r\}\quad\textnormal{and}\quad\Leaf(f):=\{1,\dots,l\}
\end{align*}
whose elements we call the \emph{roots} and \emph{leaves} of $f$, respectively. We warn the reader that we cannot in general identify the leaves of a tree (i.e.,~a forest $t$ having $|\Root(t)|=1$) in an FS category
with addresses as in (\ref{eq:address-map}) as these words usually change under skein relations (unless the skein relations are \emph{shape-preserving}; see \cite{Brothier23}).

\subsubsection{Larger FS categories} Let $\Sigma_n$ be the $n$th symmetric group. 
We represent elements of $\Sigma_n$ using diagrams in $\R^2$ in a similar manner than the Artin braid groups (see \cite[Sec.~1.6.1]{Brothier22})
We define $\cF^V$ be the set of all pairs $(f,\pi)$, where $f:r\leftarrow l$ is a forest in $\cF$, and $\pi\in\Sigma_l$. We extend the monoidal category structure of $\cF$ to $\cF^V$ as follows. For $f:m\leftarrow l$ and $g:l\leftarrow n$ we set
\begin{align*}
(f,\pi)\circ(g,\tau)=(f\circ g^\pi,\pi^g\circ\tau),
\end{align*}
where the $\pi(i)$th tree of $g^\pi:l\leftarrow n$ is the $i$th tree of $g$, and $\pi^g\in\Sigma_n$ is $\pi$ \emph{cabled} according to the leaves of the trees in $g$. 
This means that if a segment $S$ of the permutation $\pi$ ends at the $i$th root of $g$, then this segment gets replaced by $n$ parallel segments (where $n$ is the number of leaves of the $i$th tree of $g$), like a \emph{cable} of wires, with each segment starting at a different leaf of the $i$th tree of $g$, see the example below:
\begin{figure}[H]
\centering
\Cabelling
\end{figure}
Once again, we often write composition as juxtaposition. Moreover, for all $(p,\xi),(q,\eta)\in\cF^V$ we take
\begin{align*}
(p,\xi)\ot(q,\eta)=(p\ot q,\xi\ot\eta),
\end{align*}
where $\xi\ot\eta$ is horizontal concatenation of permutations (thought of as planar diagrams). 
We identify a forest $f$ with $(f,\id)$ and a permutation $\pi$ with $(I,\pi)$ so that $(f,\pi)=f\circ\pi$ inside $\cF^V$ and we extend the root and leaf sets to $\cF^V$ by $\Root(f\pi)=\Root(f)$ and $\Leaf(f\pi)=\Leaf(f)$.
We now define $\cF^T$ to be the set of all $(f,\pi)\in\cF^V$, where $\pi$ is a cyclic permutation. This is closed under $\circ$, but not $\ot$. We call $\cF^V$ and $\cF^T$ the \emph{$V$-type} and \emph{$T$-type FS categories of $\cF$}, respectively. We have embeddings of categories $\cF\into\cF^T\into\cF^V$.
We may write $\cF^F$ for $\cF.$

\subsection{Poset of trees and monoid of pointed trees} Let $\cF$ be an FS category with \emph{tree set} $\cT:=\{t\in\cF:|\Root(t)|=1\}$. The relation $s\leqslant t$ if and only if there is a forest for which $t=s\circ f$ is a partial order on $\cT$. 
A \emph{pointed tree in $\cF$} is a pair $(t,i)$, where $t$ is a tree in $\cF$ having $n$ leaves, and $1\leqslant i\leqslant n$ is a leaf of $t$ called the \emph{distinguished leaf}. 
Write $\cT_\ast$ for the set of all pointed trees in $\cF$ (called $\cT_p$ in \cite{Brothier-Seelig24}), which is a monoid with the following structure: let $(t,i)\circ(s,j)\in\cT_*$ be formed by gluing the root of $s$ to the $i$th leaf of $t$ and keeping the distinguished leaf of $s$. The identity element is the unique pointed trivial tree. The set $\cT_o$ (resp.~$\cT_\omega$) of pointed trees in $\cT_\ast$ with first (resp.~last) distinguished leaf is a submonoid. 
When the equivalence class of a tree in $\cF$ consists of trees with the same shape, then it makes sense to identify $(t,i)$ with $(t,\ell_t(i))$, where $\ell_t(i)\in\{\0,\1\}^*$ is the address of $i$ in $t$. This is the case for any any tree in a free FS category or any caret in any FS category.

\subsection{Forest-skein groups} 

\subsubsection{Ore categories} Let $\cD$ be a (small) category. We say $\cD$
\begin{itemize}
    \item is \emph{left-cancellative} if whenever there is an equality morphisms $f\circ p=f\circ q$, we have $p=q$; and
    \item has \emph{common right-multiples} if for any morphisms $g,h$ with the same target, there exist morphisms $k,l$ such that $g\circ k=h\circ l$.
\end{itemize}
If $\cD$ satisfies both of these we call $\cD$ an \emph{Ore} category. 
An FS category $\cF$ has either of these properties if and only if $\cF^T$ or $\cF^V$ has it.
Moreover, for FS categories it is sufficient to check the above axioms for $f$, $g$, $h$ being \emph{trees}. 
In this way, an FS category $\cF$ has common right-multiples if and only if its poset of trees $(\cT,\leqslant)$ is \emph{directed}.

\subsubsection{Fraction groupoids and groups} Let $\cD$ be an Ore category and write $\cP_\cD$ for the set of all pairs $(f,g)$ of morphisms having the same target. Consider the equivalence relation $\sim$ on $\cP_\cD$ generated by all $(f,g)\sim(f\circ p,g\circ p)$ and write $[f,g]$ for the class of $(f,g)$. 
The category $\cD$ being Ore implies the quotient $\Frac(\cD):=\cP_\cD/\sim$ is a groupoid with
\begin{align*}
[f,g]\cdot[g,h]=[f,h]\quad\textnormal{and}\quad[f,g]^{-1}=[g,f].
\end{align*}
When $\cD$ is (both-sided) cancellative $f\mapsto [f,\id]$ defines an injection $\cD\into\Frac(\cD)$ and $[f,g]$ can be identified with $f\circ g^{-1}.$
We call this the \emph{fraction groupoid of $\cD$}, and due to this constuction we say that Ore categories $\cD$ admit a \emph{calculus of (right-)fractions}. The assignment $\cD\mapsto\Frac(\cD)$ is functorial and extends naturally to the \emph{universal enveloping groupoid functor} $U$, which is defined on all (small) categories and acts by formally inverting every morphism in a given category. For every object $d$ in an Ore category $\cD$ the \emph{isotropy group} $\Frac(\cD,d)$ is the subgroup of $\Frac(\cD)$ consisting of classes $[f,g]$ where the source of $f$ and $g$ are $d$.

\begin{definition}[forest-skein groups]
Let $\cF$ be an Ore FS category and $X=F,T,V$. The isotropy group $\Frac(\cF^X,1)$ is called the \emph{$X$-type FS group of $\cF$}. 
\end{definition}
Hence, elements of $\Frac(\cF,1)$ are represented by pairs of \emph{trees} $(t,s)$ in $\cF$ having the same number of leaves. We think of the fraction $[t,s]$ diagrammatically: take the tree diagram $t$ and stack the tree diagram for $s$ upside down on top of $t$ so that its leaves are glued in order to those of $t$. Here are some examples of tree pair diagrams:
\begin{align*}
\generators
\end{align*}
For $X=T,V$, elements of $\Frac(\cF^X,1)$ can always be represented by $(t\pi,s)$, where $t,s$ are trees and $\pi$ is a permutation of their leaves. In this article we identify such pairs $(t\pi,s)$ with triples $(t,\pi,s)$ for no other reason than aesthetics. 
This represents the element $[t,\pi,s]=t\circ\pi\circ s^{-1}$ when taking formal inverses which can be drawn similar to above now gluing leaves of $s$ to the leaves of $t$ according to the permutation $\pi$.

\begin{notation}
We write $G^F$, $G^T$, $G^V$ for the FS groups of an Ore FS category $\cF$ suppressing the dependence on $\cF$ when the context is clear. We usually drop ``$F$'' superscripts and write $G=G^F$ to make notations lighter.     
\end{notation}

Deciding if a given category is Ore is a difficult question. Fortunately, we have the following key result; see \cite[Theo.~8.3]{Brothier22}.

\begin{theorem}[large family of explicit Ore FS categories]\label{theo:Ore-categories}
Let $x$ and $y$ be \emph{any} uncoloured trees with the same number of leaves. Then $\cF=\FS\la a,b|x(a)=y(b)\ra$ is an \emph{Ore} FS category where $x(a)$ denotes the tree where all interior vertices are coloured by $a.$
\end{theorem}

\subsection{Properties of forest-skein groups} 
For this section fix an Ore FS category $\cF=\FS\la S|R\ra$ with FS groups $G$, $G^T$, $G^V$, and pointed tree monoid $\cT_\ast$.

\subsubsection{Canonical actions for pointed trees and FS groups} From $\cF$ we construct a totally ordered profinite space $\fC$ as well as actions $\al:G^V\act\fC$ and $\beta:\cT_\ast\act\fC$, which we call the \emph{canonical actions of $\cF$}. We will recall these constructions in detail in section \ref{sec:rigid-FS}, but until then the reader can think of $\al:G^V\act\fC$ like the defining action of Thompson's group $V\act\{\0,\1\}^\N$ on the Cantor space of infinite binary sequences, and $\beta:\cT_\ast\act\fC$ as the action $\{\0,\1\}^*\act\{\0,\1\}^\N$ by adding a finite word to a the start of a sequence. However, in general the action $\al:G^V\act\fC$ is \emph{not always faithful}. 

\subsubsection{Embeddings of Thompson's groups} 

For each colour $a\in S$ we have an injective morphism $C_a:\FS\la*\ra\into\cF$ given by colouring each interior vertex by $a$ (see corollary 3.9 of \cite{Brothier22}), which induces an embedding $C_a:X\into G^X$ (when $X$ is either of the Thompson groups $F,T,V$) which is equivariant with respect to the canonical actions; see \cite[Prop.~4.1]{Brothier-Seelig24}). This is called the \emph{$a$-vertex embedding}.

\subsubsection{Simplicity}\label{sec:simplicity} The collection of all FS categories forms a category where the morphisms are monoidal functors preserving number of leaves. A \emph{quotient} of an FS category is then a surjective morphism. We say the Ore FS category $\cF$ is \emph{simple} if any proper quotient of $\cF$ is not left-cancellative. 
Since having common right-multiple is preserved under quotient we have that $\cF$ is simple if and only if it does not admit any proper Ore quotient. Note this is \emph{not} the usual definition of a simple object in a category. 

For the canonical group action $\al:G^V\act\fC$ we write $o:=\min(\fC)$ and $\omega:=\max(\fC)$ for the extremities of $\fC$ (as a totally ordered space) and write $G^0\lhd G^F$ for the normal subgroup acting trivially near $o$ and $\omega.$ Note that $D(F)=F^0$ but in general $D(G^F)\neq G^0.$ 
Though, we have the following result which follows from \cite[Theo.~3.5]{Brothier-Seelig24}.

\begin{proposition}\label{prop:D(G)}
If the germ groups of $G$ at $o$ and $\omega$ are abelian, then $D(G)=D(G^0).$ The converse also holds.
\end{proposition}

Here is the main conceptual result of \cite{Brothier-Seelig24} connecting two notions of simplicity with dynamics.

\begin{theorem}[simplicity theorem]\cite[Theo.~3.5, 3.7, 3.10, and 4.3]{Brothier-Seelig24} \label{theo:simplicity}
Let $\cF$ be an Ore FS category with FS groups $G$, $G^T$, $G^V$, and let $G^0\lhd G$ be as above. Consider the following assertions.
\begin{enumerate}
 \item The Ore FS category $\cF$ is \emph{simple}.
    \item The derived subgroup $D(G^0)$ is \emph{simple}.
    \item The derived subgroup $D(G^T)$ is \emph{simple}.
    \item The derived subgroup $D(G^V)$ is \emph{simple}.
    \item The canonical group action $G\act\fC$ is \emph{faithful}.
    \item The canonical group action $G^V\act\fC$ is \emph{faithful}.
\end{enumerate}
The assertions (2,3,4,5,6) are equivalent. 
The assertion (1) implies (2). If $\cF$ is moreover cancellative (on both side), then (2) implies (1).
\end{theorem}

\subsubsection{Topological finiteness properties}\label{sec:finiteness-properties} Recall a group $\Gamma$ \emph{is of type $\mathtt{F}_\infty$} if it admits an Eilenberg-MacLane $K(\Gamma,1)$ complex with finite $n$-skeleton for all $n$. All FS groups can be interpreted as \emph{operad groups} of Thumann \cite{Thumann17}, and under a mild hypothesis on the FS category, we may apply Thumann's topological finiteness result \cite[Theo.~4.3]{Thumann17} to deduce many FS groups are of type $\mathtt{F}_\infty$. The following can be found in \cite[Cor.~7.5]{Brothier22}.

\begin{theorem}[topological finiteness properties for FS groups]\label{theo:finiteness-properties}
For any uncoloured trees $x,y$ with the same number of leaves, the FS groups of $\FS\la a,b|x(a)=y(b)\ra$ are of type $\mathtt{F}_\infty$.
\end{theorem}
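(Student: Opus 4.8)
The plan is to realise the three FS groups as \emph{operad groups} in the sense of Thumann \cite{Thumann17} and then to verify the hypotheses of his topological finiteness criterion \cite[Theorem 4.3]{Thumann17}. First I would record that $\cF=\FS\la a,b|x(a)=y(b)\ra$ is an Ore FS category by Theorem \ref{theo:Ore-categories}, so that $G^F\subset G^T\subset G^V$ are genuine groups of fractions and the poset of trees $(\cT,\leqslant)$ is directed. The calculus of fractions is exactly what lets us present each FS group as the Thompson-like (fundamental) group of the small category of forests built from $\cF$, which is the shape of group Thumann's machinery is designed for.

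The identification with operads is the conceptual step. A coloured operad encodes the same combinatorial data as our forests: the colour set is $\{a,b\}$, the generating arity-$2$ operations are the two coloured carets $Y_a,Y_b$, and the single skein relation $x(a)=y(b)$ becomes the unique defining relation between composites of these operations. Passing to the cyclic envelope $\cF^T$ and the symmetric envelope $\cF^V$ corresponds to endowing this operad with cyclic, resp.\ symmetric, structure, so that $G^F,G^T,G^V$ are precisely the planar, cyclic, and symmetric operad groups of one and the same operad. Two of the inputs Thumann requires are then immediate: the operad is \emph{of finite type}, having finitely many colours $\{a,b\}$ and being generated by the finitely many operations $Y_a,Y_b$ subject to a single relation; and the Ore property (directedness of $(\cT,\leqslant)$, i.e.\ common right-multiples) supplies the expansions needed to build the contractible complex on which the groups act.

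The substantive step --- and the main obstacle --- is to verify Thumann's connectivity hypothesis, his notion of \emph{colour-tameness}: one must show that the simplicial complexes of expansions of a fixed tree become arbitrarily highly connected as the number of leaves grows. Concretely, for each $n$ I would study the poset of expansions of the $n$-leaf forest and estimate the connectivity of its realisation, equivalently the connectivity of the descending links of a height function on the associated Stein--Farley-type cube complex. The standard route is a discrete Morse (Bestvina--Brady) argument: take as height the number of carets, and show each descending link is homotopy equivalent to a matching-type complex whose connectivity grows linearly in $n$. The relation $x(a)=y(b)$ must be handled with care, since it identifies certain expansions; the point to check is that these identifications do not drop the connectivity below the required bound.

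Once colour-tameness is in hand, \cite[Theorem 4.3]{Thumann17} yields directly that the planar, cyclic, and symmetric operad groups are of type $\mathrm{F}_\infty$, i.e.\ $G^F,G^T,G^V$ are $\mathrm{F}_\infty$. Equivalently, one may conclude by Brown's criterion without black-boxing Thumann: the action on the contractible complex of expansions is cocompact on each skeleton with cell stabilisers of type $\mathrm{F}_\infty$, and the sublevel sets of the height function form a filtration whose essential connectivity tends to infinity by the descending-link estimate above, which is exactly what Brown's criterion needs to deduce $\mathrm{F}_\infty$.
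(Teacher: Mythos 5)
Your proposal is correct and is essentially the paper's own route: the paper gives no internal proof of this statement but cites \cite[Corollary 7.5]{Brothier22}, whose argument is exactly what you describe --- view $\FS\la a,b|x(a)=y(b)\ra$ and its $T$- and $V$-envelopes as Thumann operads of finite type (finitely many colours, the two carets as generators, one relation), obtain the calculus of fractions from the Ore property (Theorem \ref{theo:Ore-categories}), verify the colour-tameness hypothesis, and invoke \cite[Theorem 4.3]{Thumann17}. Your connectivity/descending-link sketch for colour-tameness is left as a plan rather than executed, but that matches the level of detail of the paper itself, which delegates that verification entirely to the cited reference (see the remark in Section \ref{sec:finiteness-properties}).
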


\subsubsection{Abelianisation}\label{sec:abelianisation} We greatly care about derived subgroups of FS groups as they may be simple; see theorem \ref{theo:simplicity}. To deduce finiteness properties for these groups given the above results, it suffices to witness finiteness of the abelianisation. Here is a useful result for the $T$ and $V$-cases. Let $\Z[X]$ be the set of formal sums of elements of a set $X$ with coefficients in $\Z$, which is free abelian with basis $X$. For any free forest $f$ over a colour set $S$ and any fixed colour $a\in S$ let $\chi_a(f)\in\Z[S\setminus\{a\}]$ be the sum of all colours appearing in $f$ except for any instances of $a$. For example
\begin{align*}
\chi_a(\vcenter{\hbox{\FigD}})=b+c.
\end{align*}

This map provides the following practical way to compute abelianisations of $T$ and $V$-type FS groups given a skein presentation for them. In essence the abelianisation of these groups is given by ``counting colours except for $a$ modulo skein relations''.

\begin{theorem}[abelianisation of $T$ and $V$-type FS groups]\cite[Theo.~5.1]{Brothier-Seelig24} \label{theo:abelianisation}
Let $\cF=\FS\la S|R\ra$ be a presented Ore FS category and let $X=T,V$. Fix a colour $a\in S$. 
The following group morphism
\begin{align*}
G^X\onto\frac{\Z[S\setminus\{a\}]}{\la\chi_a(u)-\chi_a(v):(u,v)\in R\ra} \ , \ [t,\pi,s]\mapsto\chi_a(t)-\chi_a(s)
\end{align*}
induces an isomorphism on the abelianisation $G^X_{ab}$.
\end{theorem}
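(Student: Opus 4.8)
\emph{Proof strategy.} Abbreviate the target as $A:=\Z^{S}/\la\chi(u)=\chi(v),\,a=0:(u,v)\in R\ra$ and write $\phi\colon G^X\to A$ for the proposed assignment $[t,\pi,s]\mapsto\chi(t)-\chi(s)$, where $X=T,V$. The two properties of $\chi$ that drive everything are that it is additive under vertical concatenation of free forests and insensitive to permuting the trees of a forest. First I would check that $\phi$ is a well-defined surjective homomorphism. Invariance under the fraction relation $[F,G]=[F\circ p,G\circ p]$ is immediate, since the contributions of $p$ (resp.~its reindexed copy) cancel; invariance under each skein relation is exactly the imposed relation $\chi(u)=\chi(v)$; and the homomorphism property, after replacing a product by a common right-multiple $G_1k=F_2l$, reduces to $\chi(G_1)+\chi(k)=\chi(F_2)+\chi(l)$. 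Surjectivity holds because $\sigma_b:=[Y_b,Y_a]\in G^F$ (the fraction of the $b$-caret over the $a$-caret) maps to $b-a\equiv b$ in $A$ and these generate $A$. As $A$ is abelian, $\phi$ descends to an epimorphism $\ov\phi\colon G^X_{ab}\onto A$, and the task becomes to invert it.

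The essential structural input --- and the reason the statement excludes $X=F$ --- is that the $a$-vertex colouring $C_a\colon X\into G^X$ lands the \emph{simple}, hence perfect, group $X$ inside $G^X$. Consequently $C_a(X)=[C_a(X),C_a(X)]\subseteq[G^X,G^X]$, so every element in the image of $C_a$ becomes trivial in $G^X_{ab}$. (For $X=F$ the abelianisation of $F$ is $\Z^2$ and the analogous statement is false.)

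The core of the proof is a caret-building lemma. Fix a monochromatic $a$-tree $\alpha_n$ with $n$ leaves and, for a tree $t$ with $n$ leaves, set $g_t:=[t,\alpha_n]\in G^F\subseteq G^X$. I claim that in $G^X_{ab}$ the class of $g_t$ equals $\sum_{b\in S}\chi(t)(b)\,[\sigma_b]$. This I would prove by induction on the number of carets of $t$: writing $t=t'\circ b_{i,m}$ for $t'$ with $m$ leaves and a $b$-caret grafted at leaf $i$, a one-line fraction computation gives $g_{t'}^{-1}g_t=[\alpha_m\circ b_{i,m},\,\alpha_{m+1}]$. The right-hand side is a single $b$-caret inserted into an all-$a$ tree at position $i$, and it is conjugate to $\sigma_b$ by a pure $a$-coloured element of $C_a(X)$ carrying leaf $i$ to leaf $1$; since that conjugator lies in $C_a(X)$ it is invisible in $G^X_{ab}$, giving $[g_t]=[g_{t'}]+[\sigma_b]$. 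Using perfectness of $C_a(X)$ once more to absorb the permutation, the factorisation $[t,\pi,s]=g_t\,\theta\,g_s^{-1}$ with $\theta\in C_a(X)$ shows that the class of any $[t,\pi,s]$ equals $[g_t]-[g_s]$ in $G^X_{ab}$. Hence $\{[\sigma_b]\}_{b\in S}$ generate $G^X_{ab}$, and $b\mapsto[\sigma_b]$ defines an epimorphism $\ov\psi\colon A\onto G^X_{ab}$: it kills $a$ (as $\sigma_a=e$) and respects $\chi(u)=\chi(v)$ (as $u=v$ in $\cF$ forces $g_u=g_v$). Finally $\ov\phi\circ\ov\psi=\id_A$ on the generating classes $b$, so $\ov\psi$ is injective, hence an isomorphism, and $\ov\phi=\ov\psi^{-1}$ is the abelianisation map, as claimed.

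The main obstacle is the caret-building lemma, and inside it the conjugacy claim: that a single $b$-caret inserted at an arbitrary leaf is $C_a(X)$-conjugate to one inserted at the first leaf. Establishing this means producing explicit all-$a$ tree pairs --- together with a permutation in the $V$-case, and a \emph{cyclic} permutation in the more constrained $T$-case --- that realise the required change of leaf position, and tracking leaf addresses carefully through the compositions. Everything else is bookkeeping with the additive invariant $\chi$; it is precisely the perfectness of $T$ and $V$ that collapses the rigid combinatorics of coloured trees onto the single difference $\chi(t)-\chi(s)$.
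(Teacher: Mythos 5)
Since this paper does not prove Theorem \ref{theo:abelianisation} at all (it imports it from \cite[Theorem 5.1]{Brothier-Seelig24}), your argument has to stand on its own, and it does not. The outer framework is fine: well-definedness and surjectivity of $\phi$, the factorisation $[t,\pi,s]=g_t\,\theta\,g_s^{-1}$ with $\theta\in C_a(X)$, and the use of perfectness of $X=T,V$ to kill $C_a(X)$ in $G^X_{ab}$ are all correct, and they correctly reduce the theorem to your caret-building lemma. The gap is the inductive step of that lemma, which rests on a false claim: that $[\alpha_m b_{i,m},\alpha_{m+1}]$ is conjugate to $\sigma_b=[Y_b,Y_a]$ (by an element of $C_a(X)$, or indeed by anything). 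Take Cleary's category $\cC_2=\FS\la a,b\,|\,a_1a_1=b_1b_2\ra$, which is simple (Corollary \ref{cor:Cleary}), so the canonical action is faithful (Theorem \ref{theo:simplicity}), and take $m=2$, $i=1$, $\alpha_2=Y_a$, $\alpha_3=a_1a_1$. The element $h:=[a_1b_1,a_1a_1]$ sends $\Cone(a_1a_1,3)$ to $\Cone(a_1b_1,3)$, and both pointed trees prune to $(Y_a,2)$, so $h$ restricts to the \emph{identity} on the nonempty clopen set $\Cone(Y_a,2)$. By contrast, in the irrational-slope realization $\al(\sigma_b)$ is a two-piece affine map all of whose slopes differ from $1$, so $\sigma_b$ is not the identity on any nonempty open subset of $\fC$. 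Acting as the identity on some nonempty open set is preserved by conjugation by an arbitrary homeomorphism of $\fC$, so $h$ and $\sigma_b$ are not conjugate in $G^X$; germ computations at the endpoints (Proposition \ref{prop:germ-pres}) rule out every other choice of the trees $\alpha_n$ as well. A warning sign you missed: conjugation by \emph{any} element of $G^X$ is invisible in $G^X_{ab}$, so your insistence that the conjugator lie in $C_a(X)$ was vacuous to begin with --- the mechanism itself, not the choice of conjugator, is what fails.

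Part of the step can be repaired because equality in $G^X_{ab}$ is weaker than conjugacy: the cabelling identity $(\alpha_m,\tau)\circ(b_{j,m},\id)=(\alpha_m b_{\tau(j),m},\,\tau^{b_{j,m}})$ gives, for any permutation $\tau$ with $\tau(j)=i$ (cyclic in the $T$-case, noting that cabelling a rotation is again a rotation), an identity in $G^X$ of the form $[\alpha_m\tau,\alpha_m]\cdot[\alpha_m b_{j,m},\alpha_{m+1}]=[\alpha_m b_{i,m},\alpha_{m+1}]\cdot[\alpha_{m+1}\tau^{b_{j,m}},\alpha_{m+1}]$, whose outer factors lie in $C_a(X)$; read in $G^X_{ab}$, this moves the $b$-caret between leaf positions. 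But your induction needs strictly more: it must identify the class of a $b$-caret hanging at a \emph{leaf} of an $a$-tree with the class of $\sigma_b$, whose $b$-caret sits at the \emph{root}, and the counterexample above shows precisely these two elements are non-conjugate, while the fraction relation $(t,s)\mapsto(tp,sp)$ and cabelling never move a caret between root and leaf positions in a general Ore FS category. That root-versus-leaf identification is the entire content of the theorem: it is exactly the point where $T$ and $V$ (where the statement is true) differ from $F$ (where it is false, $G^F_{ab}$ being genuinely larger), and your proposal supplies no argument for it.
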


Recall that $G^0$ is the subgroup consisting of those $g\in G$ that act trivially near the two extremities $o,\omega$ of the canonical space $\fC.$
A similar argument to the one of \cite[Theo.~5.1]{Brothier-Seelig24} implies the following.

\begin{corollary}\label{cor:abelianisation-Gzero}
If the canonical action is faithful, then the abelianisation of $G^0$ is isomorphic to the abelianisation of $G^T.$
\end{corollary}

\begin{proof}[Sketch of proof.]
For the convenience of the reader we sketch a proof of this result.
By definition we have an obvious embedding $G^0\into G^T$ and thus obtain a morphism $G^0\to G^T_{ab}$ by composing with $G^T\onto G^T_{ab}.$
If we take any colour $b\in S$ and any tree $t$ with three leaves we may form $t\circ (I\ot Y_b\ot I)\circ (I\ot Y_a\ot I)^{-1}\circ t^{-1}$ which is in $G^0.$
This element is sent to $b$ inside $G^T_{ab}.$ Since the $b$'s generate $G^T_{ab}$ by theorem \ref{theo:abelianisation} this shows that the composition $G^0\to G^T_{ab}$ is surjective. By universality we deduce a surjective morphism $\varphi:G^0_{ab}\onto G^T_{ab}.$ 
Let us explain why $\varphi$ is injective.
Consider $g\in G^0.$ 
Since $g$ acts trivially on the extremities there exists a large enough tree $t$ with $n$ leaves so that $g$ decomposes into $g=t\circ (I\ot B\ot I)\circ t^{-1}$ where $B$ is an element of the fraction groupoid $\cF$ with $n-2$ roots and $n-2$ leaves. 
Here $B$ stands for "block" since it has same number of roots and leaves.
The block $B$ is a composition of elements of the fraction groupoid with at most one caret. 
This means that $I\ot B\ot I$ is a finite product of $b_{j,n}^{\pm1}$ with $2\leqslant j\leqslant n-1$ and $b\in S$ (where $b_{j,n}$ stands for the forest with $n$ roots and a single caret of colour $b$ at the $j$th root).
We obtain that $g$ is a finite product of element of the form $t\circ b_{j,n}\circ a_{i,n}^{-1}\circ t^{-1}$ or their inverse with $b\in S$ and $2\leqslant i,j\leqslant n-1.$
Using this fact and existence of common-right multiples we deduce that if $\chi_a(h)=b$, then $h$ is conjugated by $D(G^0)$ to $t\circ b_{2,n}\circ a_{2,n}^{-1}\circ t^{-1}$. Then, if $\chi_a(g)=0$ one can use this fact to arrange for $g$ to be trivial up to multiplying by $D(G^0)$. 
This implies that $\varphi$ is injective.
\end{proof}

\begin{example}For instance, if $S=\{a,b\}$ and $R$ consists of the single skein relation 
\begin{align*}
\SkeinRelation,
\end{align*}
then the abelianisation of the associated FS groups $G^0,G^T,G^V$ are all isomorphic. They have one generator $b$ subject to the relation $3b=0$. Hence, $G^0_{ab}\simeq G^T_{ab}\simeq G^V_{ab}\simeq \Z/3\Z.$
\end{example}

We often have that $D(G^T)$ and $D(G^V)$ are finitely presented.
However, $D(G^0)$ is never finitely generated as observed below.

\begin{proposition}\label{prop:not-fg}
The group $D(G^0)$ is not finitely generated.
\end{proposition}

\begin{proof}
Consider an Ore FS category $\cF$ with FS group $G$ and canonical action $G\act\fC.$ Recall that $o,\omega$ are the two extremities of $\fC$ and that $G^0$ is the normal subgroup of $G$ acting trivially near $o$ and $\omega.$
Let $X\subset D(G^0)$ be a finite subset and write $H$ for the subgroup generated by $X$.
By definition of $G^0$ for each $g\in X$ there is an open subset $\Omega_g\subset \fC$ containing $o$ and $\omega$ on which $g$ acts like the identity. 
Set $\Omega:=\cup_{g\in X} \Omega_g$. It is clear that $H$ is contained in the subgroup of $D(G^0)$ whose element act like the identity on $\Omega$.
By taking uncoloured trees and colouring all there nodes by a fixed colour of $\cF$ we obtain an embedding $F\into G^F$ sending $D(F)$ inside $G^0$ and thus $D(D(F))$ inside $D(G^0).$
Since $D(F)$ is simple and non-abelian $D(D(F))=D(F)$. 
Additionally, under this identification on can show that elements of $D(F)$ can act non-trivially arbitrarily close to $o\in\fC$.
This implies that $D(F)\not\subset H$ and thus $D(G^0)\not\subset H.$
Therefore, $D(G^0)$ is not finitely generated.
\end{proof}

\subsection{Rigid actions and reconstruction theorems}

\subsubsection{Some terminology for ordered spaces}

Fix a set $Z$. 
A \emph{total order} or \emph{linear order} $\leqslant$ on $Z$ is a binary relation that is transitive, anti-symmetric, so that two different elements are always comparable. We write $<$ for \emph{strict} orders (when we exclude equality) and $\leqslant$ for non-strict ones. 
The order is \emph{dense} when given $x<y$ there exists $z$ satisfying $x<z<y.$
A \emph{jump} is a pair $(x,y)$ with $x<y$ and satisfying that $x\leqslant z\leqslant y$ implies $z\in\{x,y\}.$
Hence, not having jumps means being dense.
Endpoint stands for minimum and maximum and intervals (open, closed) are defined in the obvious way. We will always assume that a totally ordered space $(Z,\leqslant)$ is equipped with its \emph{order topology}: the topology generated by open intervals.
We define Dedekind completions for spaces without endpoints. 
\begin{definition}
Let $(Z,\leqslant)$ be a dense totally ordered space without endpoints.
\begin{enumerate}
\item A \emph{Dedekind cut} of $(Z,\leqslant)$ is a non-empty proper subset $A\subset Z$ so that if $x\leqslant a$ with $a\in A$, then $x\in A$ and if $a\in A$ then there exists $b\in A$ so that $a<b$.
\item \emph{The Dedekind completion} $(\Ded(Z),\leqslant)$ of $(Z,\leqslant)$ is the set of Dedekind cuts ordered by inclusion. This is again dense, totally ordered, without endpoints.
\end{enumerate}
\end{definition}

We briefly recall some group action terminology.

\begin{definition}
Consider a group action $\Ga\act Z$ on a set $Z$.
\begin{itemize}
\item The \emph{support} of $g\in \Ga$ is the set $\supp(g):=\{x\in Z:\ gx\neq x\}.$
\item If $U\subset Z$ is a subset, then $\Ga_U:=\{g\in \Ga:\ \supp(g)\subset U\}$.
\end{itemize}
Assume now that $\leqslant$ is a total order that $\Ga$ preserves.
\begin{itemize}
\item An element $g\in\Ga$ has \emph{bounded support} or is \emph{bounded} if there exists $x,y\in Z$ so that $\supp(g)\subset (x , y).$
\item The action $\Ga\act (Z,\leqslant)$ is \emph{$n$-o-transitive} (for $n\geqslant 1$) if given any strict chains $x_1<\cdots<x_n$ and $y_1<\cdots<y_n$ inside $X$ there exists $g\in\Ga$ satisfying $g(x_i)=y_i$ for all $1\leqslant i\leqslant n.$
It is \emph{highly-o-transitive} when it is $n$-transitive for all $n\geqslant 1.$
\end{itemize}
Assume now that $\tau$ is a topology on $Z$ and that $\Ga$ act by homeomorphisms.
\begin{itemize}
\item The action is \emph{locally dense} when given any open subset $U\subset Z$ and point $p\in U$ we have that the closure of the orbit $\Ga_U\cdot p$ is a neighbourhood of $p$.
\end{itemize}
\end{definition}

\subsubsection{Some terminology for cyclically ordered spaces}

A \emph{cyclic order} $Cr$ on a set $Z$ is a subset of $Z^3$ satisfying the axioms
\begin{enumerate}
\item $(x,y,z)\in Cr \text{ implies } (y,z,x)\in Cr$;
\item $(x,y,z)\in Cr \text{ implies } (x,z,y)\notin Cr$;
\item if $(x,y,z)$ and $(z,t,x)$ are in $Cr$, then $(y,z,t)$ is in $Cr$ and
\item given $x,y,z\in Z$ we either have $(x,y,z)\in Cr$ or $(x,z,y)\in Cr.$
\end{enumerate}
The last axiom is not always assumed by other authors. It corresponds in having $Cr$ \emph{total}.
Additionally, the second axiom implies \emph{strictness}. If $<$ is a strict total order on $Z$, then 
$$Cr:=\{(x,y,z)\in Z^3:\ x<y<z \text{ or } y<z<x \text{ or } z<x<y\}$$
is a cyclic order.
Conversely, if $Cr$ is a cyclic order and $x_0\in Z$ is a fixed point, then we can \emph{unroll} $Z$ at $x_0$ obtaining a strict totally ordered space $(Z,<)$ so that $x_0$ is the smallest element and where given $y\neq x_0\neq z$ we have $y<z$ if and only if $(x_0,y,z)\in Cr$. 
(For example, we can unroll a circle into a half open real interval). 
We define in the analogous way than for total orders the notions of intervals, density, order-topology, and Dedekind completion for cyclic orders. 
We will say that a group action $\Ga\act (Z,Cr)$ is \emph{order-preserving} if $(x,y,z)\in Cr$ implies $(gx,gy,gz)\in Cr$ for all $g\in\Ga.$

\subsubsection{Rigidity theorems for group actions}
We now enunciate three rigidity theorems. Let $\gamma:\Ga\act (Z,\tau)$ be a group $\Ga$ acting by homeomorphism on a topological space $(Z,\tau)$.
The action is called \emph{T-rigid} (T standing for topology) when 
\begin{itemize}
\item $(Z,\tau)$ is locally compact, Hausdorff, and has no isolated points;
\item $\gamma$ is faithful and is \emph{locally dense}. 
\end{itemize}
When $\Ga$ admits a T-rigid action we say that $\Ga$ is a \emph{T-rigid group}.

The following theorem is due to Rubin \cite[Cor.~3.5]{Rubin89}.
We also recommend the paper \cite{Belk-Elliott-Matucci23} for a short and elementary proof.

\begin{theorem}\label{theo:T-rigid}
If $\gamma_i:\Ga_i\act (Z_i,\tau_i), i=1,2$ are T-rigid actions, then for any isomorphism $\theta:\Ga_1\to \Ga_2$ there exists a unique homeomorphism $f:Z_1\to Z_2$ satisfying that 
$$f\circ \gamma_1(g)=\gamma_2(\theta(g))\circ f \text{ for all } g\in \Ga_1.$$
Hence, a T-rigid group admits a unique T-rigid action up to equivariant homeomorphism. 
\end{theorem}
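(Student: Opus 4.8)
The plan is to reconstruct the acted-upon space \emph{functorially} from the abstract group, so that the isomorphism $\theta$ transports automatically to a homeomorphism. Precisely, I would attach to each T-rigid action $\gamma\colon\Ga\act(Z,\tau)$ a topological space $\widehat Z$ together with an action $\Ga\act\widehat Z$, built using \emph{only} the group structure of $\Ga$, such that: (i) any isomorphism $\theta\colon\Ga_1\to\Ga_2$ induces a $\theta$-equivariant homeomorphism $\widehat Z_1\to\widehat Z_2$; and (ii) the original action $\Ga\act Z$ is equivariantly homeomorphic to $\Ga\act\widehat Z$. Granting this, the existence half of the theorem is immediate by composing $Z_1\simeq\widehat Z_1\simeq\widehat Z_2\simeq Z_2$ (this is the route of \cite{Belk-Elliott-Matucci23}).

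The substantive step is the reconstruction, which I would carry out in two stages. First, recover an invariant \emph{support lattice}: for $g\in\Ga$ the support $\supp(g)$ is open and its regular closure lies in the Boolean algebra $\mathrm{RC}(Z)$ of regular closed subsets, on which $\Ga$ acts. The heart of Rubin's argument is a package of lemmas --- and this is exactly where local density, faithfulness, and the absence of isolated points are used --- expressing purely in terms of the group multiplication when two elements have disjoint supports, and more generally recovering the partial order and the Boolean operations on the sublattice generated by the sets $\overline{\supp(g)}$. From this one also extracts algebraically the rigid stabiliser subgroups $\Ga_U$ as $U$ ranges over a basis of regular open sets. Second, recover the points: each $p\in Z$ determines an ultrafilter on the support lattice (roughly, the regular open sets whose closure is a neighbourhood of $p$), and conversely --- here local compactness enters alongside local density --- every such ultrafilter comes from a unique point, the resulting bijection being a homeomorphism for the natural Stone-type topology on the ultrafilter space. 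This produces $\widehat Z$ and the action $\Ga\act\widehat Z$ entirely from $\Ga$; property (i) is then formal since every ingredient was group-theoretic, and (ii) is precisely the assertion that $Z$ coincides with its own reconstruction.

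Uniqueness of $f$ is the easy part, and I would argue it directly. If $f,f'$ both satisfy the intertwining relation, set $\phi:=f^{-1}\circ f'$, a self-homeomorphism of $Z_1$; then $\phi\circ\gamma_1(g)=f^{-1}\circ\gamma_2(\theta(g))\circ f'=f^{-1}\circ\gamma_2(\theta(g))\circ f\circ\phi=\gamma_1(g)\circ\phi$ for every $g$, so $\phi$ commutes with the $\Ga_1$-action. If $\phi(p)\neq p$ for some $p$, choose disjoint open neighbourhoods $U\ni p$ and $V\ni\phi(p)$; since $Z_1$ has no isolated points while local density makes $\overline{\Ga_U\cdot p}$ a neighbourhood of $p$, there is $g\in\Ga_1$ with $\supp(g)\subseteq U$ and $g\cdot p\neq p$. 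As $g$ fixes $\phi(p)\in V$, we get $\phi(g\cdot p)=g\cdot\phi(p)=\phi(p)$, contradicting injectivity of $\phi$. Hence $\phi=\mathrm{id}$ and $f=f'$.

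I expect the main obstacle to be the reconstruction of the support lattice together with the verification that its ultrafilters recover exactly the points of $Z$: this is where all four hypotheses (no isolated points, local compactness, local density, faithfulness) are genuinely needed and where the combinatorial core of Rubin's proof lies. By contrast, the functoriality asserted in (i) and the uniqueness of $f$ are routine once the reconstruction is in place.
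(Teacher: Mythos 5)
The paper does not actually prove this statement: it is quoted as Rubin's reconstruction theorem \cite[Corollary 3.5]{Rubin89}, with \cite{Belk-Elliott-Matucci23} recommended for a short, elementary proof. So there is no in-paper argument to compare against line by line. Your outline of the existence half is a faithful description of the strategy of those references: algebraically recover the Boolean algebra generated by (regular closures of) supports together with the rigid stabilisers $\Ga_U$, then recover points of $Z$ as suitable ultrafilters on that lattice, local compactness guaranteeing that every such ultrafilter is realised by a unique point, and functoriality of the whole construction giving the $\theta$-equivariant homeomorphism. As a \emph{proof}, however, this remains a roadmap: the two steps you yourself flag as the combinatorial core --- the purely group-theoretic characterisation of disjointness of supports (and hence of the lattice order), and the verification that the ultrafilter space is equivariantly homeomorphic to $Z$ --- are named but not carried out, and they are precisely the nontrivial content of the theorem. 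If this were submitted as a self-contained argument it would have a genuine gap there; as an account of the cited proof it is accurate.

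Your uniqueness argument, by contrast, is complete and correct, and is the standard one. The computation showing $\phi:=f^{-1}\circ f'$ commutes with $\gamma_1$ is right, and the contradiction is sound: if $\phi(p)\neq p$, Hausdorffness gives disjoint opens $U\ni p$, $V\ni\phi(p)$; if every $g$ with $\supp(g)\subset U$ fixed $p$ then $\overline{(\Ga_1)_U\cdot p}=\{p\}$ would be a neighbourhood of $p$ by local density, forcing $p$ to be isolated; so some such $g$ moves $p$ while fixing $\phi(p)$, and $\phi(g\cdot p)=g\cdot\phi(p)=\phi(p)$ contradicts injectivity of $\phi$. This half needs no reference to the reconstruction machinery.
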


Similarly, there are the classes of O-rigid (O for order) and C-rigid (C for cyclic order) groups and actions. 
If $\Ga$ is O-rigid (resp.~C-rigid), then there exists an O-rigid (resp.~C-rigid) action $\gamma:\Ga\act (Z,\leqslant)$ (resp.~$\Ga\act (Y,Cr)$) on a \emph{Dedekind complete} space that is order-preserving. If $\theta:\Ga_1\to \Ga_2$ is a group isomorphism between two O-rigid groups (resp.~C-rigid groups), then there exists a unique bijection $f:Z_1\to Z_2$ that is either \emph{order-preserving} or \emph{order-reversing} satisfying:
$$f\circ \gamma_1(g)=\gamma_2(\theta(g))
\circ f \text{ for all } g\in \Ga_1.$$
In particular, if $\Ga$ is O-rigid (resp.~C-rigid), then one can reconstruct $(Z,\leqslant)$ (resp.~$(Y,Cr)$), up to reversing the order, and the action of $\Ga$ on it. We now enunciate two rigidity theorems with non-optimal hypothesis that are sufficient for our needs. The following theorem is due to McCleary \cite[Theo.~4]{McCleary78}; see also \cite[Theo.~1]{McCleary-Rubin05}.
\begin{theorem}\label{theo:O-rigid}
Let $(Z,\leqslant)$ be a dense totally ordered set without extremities.
Let $\Ga\act (Z,\leqslant)$ be a faithful, order-preserving action that is $2$-o-transitive and has one non-trivial bounded element.
Then the action of $\Ga$ on the Dedekind completion of $(Z,\leqslant)$ is \emph{O-rigid}.
\end{theorem}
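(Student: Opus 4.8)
The plan is to reduce the statement to Rubin's T-rigidity theorem (Theorem~\ref{theo:T-rigid}), which applies once we move to the Dedekind completion: since $\Ded(Z)$ is a Dedekind complete linear order without endpoints, its order topology is locally compact (closed bounded intervals are compact), Hausdorff, and has no isolated points (density). The action of $\Ga$ on $Z$ extends uniquely to an order-preserving action on $\Ded(Z)$, hence to an action by homeomorphisms, and it stays faithful because an element acting trivially on $\Ded(Z)$ acts trivially on the dense subspace $Z$. So the one nontrivial input needed to invoke Theorem~\ref{theo:T-rigid} is \emph{local density} of $\Ga\act\Ded(Z)$.

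The first --- and, I expect, hardest --- step is to upgrade the hypotheses to the statement that $\Ga\act(Z,\leqslant)$ is $n$-o-transitive for every $n$ (in fact $3$-o-transitivity is all the rest of the argument uses). Conjugating the given nontrivial bounded element $g_0$ by $2$-o-transitivity immediately shows $\Ga_{(c,d)}\neq\{e\}$ for every bounded open interval $(c,d)$; promoting this to high transitivity is essentially McCleary's structure dichotomy for o-$2$-transitive $\ell$-permutation groups (either there are no bounded elements and the action is only finitely-o-transitive --- the ``projective'' case --- or bounded elements exist and the action is highly-o-transitive). The delicate part, and the technical heart of the whole theorem, is the orbital analysis of bounded elements together with the way $2$-o-transitivity lets one amalgamate such orbitals; this is where I would expect most of the effort to go.

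Granting $3$-o-transitivity, local density follows quickly. Given open $U\ni p$, shrink to a bounded interval $(c,d)\ni p$ with $c,d\in Z$, choose $c<c'<p<d'<d$ in $Z$, and (writing $\supp(g_0)\subseteq(a,b)$ with $g_0$ moving some $w\in(a,b)$) conjugate $g_0$ by an $h\in\Ga$ with $h(a)=c'$, $h(w)=p$, $h(b)=d'$, which exists by $3$-o-transitivity. This produces a nontrivial element of $\Ga_{(c',d')}\subset\Ga_U$ moving $p$, and letting $h$ vary shows the $\Ga_U$-orbit of $p$ is dense in a neighbourhood of $p$; a gap point $p\in\Ded(Z)\setminus Z$ is handled the same way, now pushing a $Z$-point across the corresponding cut. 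Hence $\Ga\act\Ded(Z)$ is T-rigid, and Theorem~\ref{theo:T-rigid} produces, for any isomorphism $\theta\colon\Ga_1\to\Ga_2$ between two groups of this type, a unique homeomorphism $f\colon\Ded(Z_1)\to\Ded(Z_2)$ intertwining the actions.

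It remains to see that $f$ is monotone, which is precisely the O-rigidity conclusion. This is the standard last step: the linear order is recoverable from the dynamics up to reversal because betweenness of points is --- for $p_1<p_3$ in $Z$ one checks, using the high transitivity, that the point-set $(p_1,p_3)$ is exactly the maximal $\Ga_{p_1,p_3}$-invariant open subset with compact closure, a description preserved by the equivariant homeomorphism $f$, and a bijection of endpoint-free linear orders preserving betweenness is order-preserving or order-reversing. (One also records that $2$-o-transitivity rules out any $\Ga$-invariant Dedekind cut of $Z$, confirming that $\Ded(Z)$, rather than a proper invariant subspace, is the correct space on which to reconstruct.) Since the two actions preserve their orders and $f$ is monotone, $\Ga\act\Ded(Z)$ is O-rigid, as claimed.
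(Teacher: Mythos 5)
The first thing to say is that the paper does not prove this statement at all: Theorem~\ref{theo:O-rigid} is enunciated as an imported result, cited from McCleary \cite[Theorem 4]{McCleary78} and McCleary--Rubin \cite[Theorem 1]{McCleary-Rubin05}, whose own arguments go through the theory of locally moving groups acting on chains (recovering the ordered space from the algebra of ``supports'') rather than through Rubin's topological reconstruction theorem. So there is no internal proof to compare with; your proposal has to stand on its own as an alternative derivation via Theorem~\ref{theo:T-rigid}.

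Judged on its own, the outline (extend to $\Ded(Z)$, check local compactness, no isolated points, faithfulness, prove local density, apply Theorem~\ref{theo:T-rigid}, then upgrade the homeomorphism to a monotone map) is coherent, but it has a genuine gap exactly where you say the effort lies: the promotion of ``$2$-o-transitive with one non-trivial bounded element'' to the $3$- or high o-transitivity that every subsequent step uses is asserted, not proved. A direct attempt fails with $2$-o-transitivity alone (conjugating $g_0$ you can prescribe only two images, so you cannot simultaneously control the support and the displacement of the conjugate), and the classical dichotomy you invoke is usually proved for $\ell$-permutation groups, where one can truncate elements using the lattice operations; the hypotheses here concern an arbitrary subgroup of $\Aut(Z,\leqslant)$, so this step genuinely needs an argument (it is essentially the content of McCleary's structure theory, i.e.\ of the theorem being proved). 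Two further, smaller points: in the local density step you cannot demand $h(w)=p$ when $p\in\Ded(Z)\setminus Z$ is a cut --- transitivity only prescribes images of points of $Z$ --- so for gap points you must trap $p$ between controlled $Z$-points (prescribing $h$ at the four points $a<w<w'<b$ bounding and witnessing the displacement of $g_0$), which is repairable but uses $4$-o-transitivity, not $3$; and the final monotonicity step is over-engineered, since $\Ded(Z)$ is a densely ordered Dedekind complete chain, hence a linear continuum, where betweenness is purely topological ($p_2$ lies between $p_1$ and $p_3$ if and only if $p_1,p_3$ lie in different components of the complement of $p_2$), so the equivariant homeomorphism produced by Theorem~\ref{theo:T-rigid} is automatically order-preserving or order-reversing without any dynamical characterisation of intervals.
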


Here is a cyclic version of this theorem due to McCleary and Rubin \cite[Theo.~3]{McCleary-Rubin05}.
\begin{theorem}\label{theo:C-rigid}
Let $(Z,Cr)$ be a dense cyclically ordered space. 
Let $\Ga\act (Z,Cr)$ be a faithful order-preserving action that is $3$-o-transitive.
Then the action of $\Ga$ on the Dedekind completion of $(Z,Cr)$ is \emph{C-rigid}.
\end{theorem}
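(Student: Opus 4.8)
The plan is to deduce Theorem \ref{theo:C-rigid} from its linear counterpart, Theorem \ref{theo:O-rigid}, by \emph{unrolling} the cyclic order at a point and passing to point stabilisers, and then to reassemble the local pictures into a global reconstruction.

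First I would fix a point $x_0\in Z$ and unroll $(Z,Cr)$ at $x_0$ as in the discussion preceding the theorem, obtaining a strict total order $<_{x_0}$ on $Z$ with minimum $x_0$; deleting $x_0$ gives a dense totally ordered space $(Z_{x_0},<_{x_0})$ with $Z_{x_0}:=Z\setminus\{x_0\}$ and no endpoints (density kills both extremities). The stabiliser $\Ga_{x_0}$ acts on $(Z_{x_0},<_{x_0})$ by order-preserving homeomorphisms, and faithfully, since an element of $\Ga_{x_0}$ fixing $Z_{x_0}$ pointwise fixes all of $Z$. Restricting to triples one of whose coordinates is $x_0$ shows that $3$-o-transitivity of $\Ga\act(Z,Cr)$ forces $2$-o-transitivity of $\Ga_{x_0}\act(Z_{x_0},<_{x_0})$. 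The only extra ingredient needed to invoke Theorem \ref{theo:O-rigid} is one non-trivial bounded element of $\Ga_{x_0}$; in the applications in this paper (and more generally for locally moving actions) such elements are abundant, and supplying them in full generality is part of what McCleary and Rubin establish — this is the one place where the cyclic case is genuinely more delicate than a mechanical translation. With this in hand, Theorem \ref{theo:O-rigid} gives that $\Ga_{x_0}$ acting on the Dedekind completion of $(Z_{x_0},<_{x_0})$ — canonically the Dedekind completion of $(Z,Cr)$ unrolled at $x_0$ — is O-rigid.

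Next I would globalise. Given two actions as in the hypothesis and an isomorphism $\theta:\Ga_1\to\Ga_2$, the crucial point is to recognise point stabilisers group-theoretically: in a $3$-o-transitive cyclic action the family of point stabilisers is a distinguished conjugacy class of subgroups, characterised (for instance) as the proper subgroups that act $2$-o-transitively on the complement and are maximal with this property. Hence $\theta$ carries point stabilisers of $\Ga_1$ onto those of $\Ga_2$ and induces a $\theta$-equivariant bijection $f_0:Z_1\to Z_2$ on points. Applying the first step at each point and the uniqueness clause of Theorem \ref{theo:O-rigid}, the map $f_0$ is order-preserving (or order-reversing) on every unrolled chart, and these choices are forced to be globally consistent; thus $f_0$ preserves or reverses $Cr$. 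Extending by density and continuity to the Dedekind completions yields the required equivariant, cyclic-order-preserving-or-reversing bijection, and its uniqueness follows from faithfulness together with density of orbits.

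The main obstacle is exactly the group-theoretic recognition of point stabilisers (and then the recovery of the cyclic order from triples of such subgroups): this is the technical heart of the McCleary--Rubin argument and does not follow formally from the linear case. A robust alternative — morally how Rubin's general machinery proceeds (cf.\ Theorem \ref{theo:T-rigid} and \cite{Rubin89}) — is to bypass individual stabilisers and instead reconstruct from $\Ga$ the lattice of ``regular open'' cyclic intervals, via the subgroups of boundedly supported elements and their commutation relations, identify this lattice with the regular-open algebra of the completed space, recover the space as the associated point space, and finally recover the cyclic order up to reversal from the action; this route trades stabiliser recognition for the (also non-trivial) verification that $3$-o-transitivity together with local density pins down the abstract lattice.
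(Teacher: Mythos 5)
This statement is not proved in the paper at all: it is quoted verbatim as \cite[Theorem 3]{McCleary-Rubin05} (just as Theorem \ref{theo:O-rigid} is quoted from McCleary), so there is no in-paper argument to compare yours against. Your submission therefore has to stand as a self-contained proof, and as written it does not: it is a plan with two gaps that you yourself flag, and at least one of them is fatal to the proposed reduction rather than a routine verification.

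The fatal one is the bounded element. After unrolling at $x_0$, Theorem \ref{theo:O-rigid} requires a non-trivial element of $\Ga_{x_0}$ with bounded support in $(Z\setminus\{x_0\},<_{x_0})$, and this simply does not follow from the hypotheses of Theorem \ref{theo:C-rigid}. Concretely, let $\Ga$ be the orientation-preserving part of $\mathrm{PGL}_2(\Q)$ acting on $P_1(\Q)$ with its cyclic order: this action is faithful, order-preserving, dense and $3$-o-transitive (sharp $3$-transitivity of $\mathrm{PGL}_2$ sends any positively oriented triple to any other by an orientation-preserving element), yet the stabiliser of $\infty$ is a group of affine maps of $\Q$, each non-identity element of which has at most one fixed point and hence is never boundedly supported. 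So there exist actions satisfying every hypothesis of Theorem \ref{theo:C-rigid} for which your chart-by-chart appeal to Theorem \ref{theo:O-rigid} is unavailable; the circular theorem is genuinely not a formal corollary of the linear one via unrolling, and the remark that McCleary and Rubin ``supply such elements in full generality'' cannot be right, since no such elements exist in this example. (The same example defeats your proposed fallback through Rubin's Theorem \ref{theo:T-rigid}: with no boundedly supported elements, $\Ga_U$ is trivial for small $U$ and local density fails.) The second gap --- the group-theoretic recognition of point stabilisers, which you characterise as ``maximal proper subgroups acting $2$-o-transitively on the complement'' --- is asserted without justification and is, as you say, the technical heart of the McCleary--Rubin argument; a proof cannot simply defer it. For the purposes of this paper the correct move is the one the authors make: cite \cite[Theorem 3]{McCleary-Rubin05} and do not reprove it.
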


\begin{remark}
If a group $\Ga$ is O-rigid or C-rigid then it remembers a dynamic on a space with an order (or cyclic order) up to reversing. Taking the order topology (which is invariant under reversing) we obtain a topological space. 
Hence, an O-rigid or a C-rigid group remembers more than a T-rigid group. 
All FS groups coming from FS categories with a faithful canonical action are T-rigid. We will prove more by showing that in the $F$-case (resp.~$T$-case) the group is O-rigid (resp.~C-rigid).
\end{remark}

\section{Rigid actions for forest-skein groups}\label{sec:rigid-FS}

Fix an Ore FS category $\cF$ with tree set $\cT$, pointed tree monoid $\cT_\ast$, and FS groups $G^F$, $G^T$, $G^V$.
We recall the construction of the canonical group and monoid actions of $\cF$ (and their completions). 
Further details can be found in \cite{Brothier22,Brothier-Seelig24}. We will then turn the canonical \emph{group} action into three rigid actions. 
Note that the construction of the canonical group action is functorial (from FS categories to group actions) while the construction of rigid action (from groups to group actions) is \emph{not}, for example the inclusion $T\into V$ does not provide an equivariant continuous map from the circle to the Cantor space $\R/\Z\to\{\0,\1\}^\N$ as such a map is necessarily constant.

\subsection{Canonical actions}\label{sec:canonical-actions}

\subsubsection{Dyadic rational analogue} 

Given two trees $t,s\in\cT$ satisfying $s\leqslant t$ there exists a unique forest $f\in\cF$ so that $t=s\circ f.$
Define the map $\Phi(f):\Leaf(s)\to\Leaf(t)$ so that $\Phi(f)(\ell)$ is the first leaf of the $\ell$th tree of $f$.
This defines a directed system having for limit $$\fQ:=\varinjlim_{t\in\cT}\Leaf(t)=\frac{\cup_{t\in\cT}\{t\}\times\Leaf(t)}{\sim}$$ 
where $\sim$ is generated by $(t,j)\sim(tf,\Phi(f)(j))$ for all $t,f\in\cF$.
We obtain a group action $\al:G^V\act\fQ$ that satisfies the formula:
$$[t,\pi,s] \cdot [s,j]:= [t,\pi(j)],$$
where $t,s$ are trees, $\pi$ is a permutation of their leaves, and $[s,j]$ denotes the class of the $j$th leaf of the tree $s$ inside $\fQ.$ We may refer to $\al:G^V\act\fQ$ as the \emph{canonical group action of $\cF$.}
We will often use the notation $\ell^f:=\Phi(f)(\ell)$ to denote the numbering of the first leaf of the $\ell$th tree of a forest $f$. Equivalently, $\fQ$ can be viewed as the quotient of the pointed trees $\cT_\ast=\cup_{t\in\cT}\{t\}\times\Leaf(t)$ by the equivalence $\sim$ generated by all $(t,\ell)\sim(tf,\ell^f)$. From this perspective, we obtain a natural action $\beta:\cT_\ast\act\fQ$ of the pointed tree monoid $(\cT_*,\circ)$ which factors through the self action $\cT_\ast\act\cT_\ast$ by left-multiplication:
\begin{align*}
(t,i)\cdot[s,j]:=[(t,i)\circ(s,j)].
\end{align*}
We may refer to $\beta:\cT_*\act\fQ$ as the \emph{canonical monoid action of $\cF$.}

\subsubsection{Total order}

Given $[t,i],[s,j]\in\fQ$ we write $[t,i]\leqslant[s,j]$ when there exist forests $f,g\in\cF$ satisfying $t\circ f=s\circ g$ and $i^f\leqslant j^g.$
This defines a dense total order on $\fQ$ having one extremity $o$ equal to its minimum. Note that $o=[t,1]$ for all trees $t$ (the class of a tree pointed at its first leaf).
Moreover, $G\act (\fQ,\leqslant)$ is order-preserving and $G^T\act (\fQ,Cr)$ preserves the cyclic order $Cr$ obtained from $\leqslant.$ Similarly, $\cT_*\act(\fQ,\leqslant)$ is order-preserving.
We write $\fQ^*$ for $\fQ\setminus\{o\}$: $\fQ$ minus its minimum.

\subsubsection{Cantor space analogue} 
By reversing the arrows we obtain a completion of $\fQ$. Given two trees $s\leqslant t$ with $t=s\circ f$ define 
$\Psi(f):\Leaf(t)\to\Leaf(s)$ so that for any leaf $\ell$ of $t$ we have $\Psi(f)(\ell)$ is the unique leaf of $s$ which is a descendant of $\ell$.
This forms an inverse system whose limit 
$$\fC:=\varprojlim_{t\in\cT} \Leaf(t)= \{x\in\prod_{t\in\cT}\Leaf(t) : \ \Psi(f)(x(tf))=x(t) \text{ for all } t\in\cT,\  f\in\cF\}$$ 
is a profinite space that we may call the \emph{canonical space of $\cF$}.
There is a unique action by homeomorphisms $\al:G^V\act \fC$ satisfying 
$$([t,\pi,s]\cdot x)(t)=\pi(x(s))$$
with $x=(x(t))_{t\in\cT}$. 
We write $x\leqslant y$ when $x(t)\leqslant y(t)$ eventually in $t\in\cT.$
This defines a total order on $\fC$ and the order topology coincides with the profinite topology.
We write 
$$o:=\min\fC \text{ and } \omega:=\max\fC$$
for its extremities. We will explain how the pointed tree monoid acts on $\fC$ in a moment. 

\subsubsection{Embeddings of $\fQ$ into $\fC$}

Recall $\fQ^*=\fQ\setminus\{o\}$. There is a unique embedding 
$$\iota^+:\fQ^*\into \fC, \ q\mapsto q^+$$ 
satisfying that $\iota^+([t,j])=x$
where $x(t\circ h)$ is the first leaf of the $j$th tree of $h$ for any forest $h$. We define a second embedding 
$$\iota^-:\fQ^*\into \fC, \ q\mapsto q^-$$
defined as follows. Given $q\in \fQ^*$ the interval $[q^+,\omega]$ is clopen in $\fC$ and has clopen complement $[o,q^+)$.
We set $q^-:=\sup([o,q^+))$ the supremum of $[o,q^+)$. This indeed exists since $[o,q^+)$ is compact.
We have the following descriptions in terms of orbits:
$$G^V\cdot o = \iota^+(\fQ^*)\cup\{o\}=:\fQ^+ \text{ and } G^V\cdot \omega = \iota^-(\fQ^*)\cup\{\omega\}=:\fQ^-.$$
Hence, $\fQ^+$ and $\fQ^-$ are analogous to all those binary sequences which are eventually all $\0$'s and all $\1$'s, respectively. 
By construction, $(q^-,q^+)$ is a jump for each $q\in \fQ^*$ and the maps $\iota^-,\iota^+$ are order-preserving with dense range.
Moreover, $(\omega,o)$ is a jump for the circular order $Cr$ obtained from $\leqslant.$
For convenience, we may extend $\iota^+$ to the extremity of $\fQ$ so that $\iota^+([t,1])=o$ for each tree $t$ pointed at its first leaf.
Additionally, if we identify $\fQ$ with a subset of $\fC$ it will be always via $\iota^+.$

One can characterise $G$ and $G^T$ inside $G^V$ using these orders. The following can be found in \cite[Prop.~6.1]{Brothier22}.

\begin{proposition}[characterising FS type with orders]\label{prop:FS-group-order}
Let $\leqslant$ be the total order of $\fC$ and $Cr$ the cyclic order deduced from it.
The subgroup of $g\in G^V$ that preserves the order $\leqslant$ (resp.~the cyclic order $Cr$) is equal to $G$ (resp.~$G^T$).
\end{proposition}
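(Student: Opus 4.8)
The plan is to read the statement off the ``block'' structure of the action $\al:G^V\act\fC$. Recall from the preliminaries that every $g\in G^V$ can be written $g=[t,\pi,s]$ with $t,s$ trees having a common number $n$ of leaves and $\pi\in\Sigma_n$; moreover $g\in G$ exactly when some such representative has $\pi=\id$, and $g\in G^T$ exactly when some such representative has $\pi$ a rotation of $\{1,\dots,n\}$ (a morphism $1\leftarrow n$ in $\cF$, resp.\ in $\cF^T$, is a tree, resp.\ a tree post-composed with a cyclic permutation, and one may absorb the permutation of the ``denominator'' into the numerator, e.g.\ $[(t,\al),(s,\beta)]=[(t,\al\beta^{-1}),(s,\id)]$). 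First I would record the key geometric fact: a tree $r$ with $n$ leaves induces a clopen partition $\fC=C_1(r)\sqcup\dots\sqcup C_n(r)$, where $C_j(r)=\{x\in\fC:x(r)=j\}$; by the definition of $\leqslant$ (comparison ``eventually in $t\in\cT$'') these cones are consecutive closed intervals $C_1(r)<\dots<C_n(r)$, and the canonical ``zoom-in'' bijection $C_j(r)\cong\fC$ is an order isomorphism.

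Then I would establish two computations for the action. \emph{(a)} A tree fraction $[t,s]\in G$ acts by an \emph{order-preserving} homeomorphism of $\fC$ (hence also preserves $Cr$): applying the defining formula $([t,s]\cdot x)(s)=x(t)$ to the refined representative $[t\circ f,s\circ f]$ gives $([t,s]\cdot x)(s\circ f)=x(t\circ f)$ for every forest $f$; since both $\{s\circ f\}_f$ and $\{t\circ f\}_f$ are cofinal in the directed poset $\cT$, and since $x<y$ means $x(w)<y(w)$ for all trees $w$ beyond some tree, substituting $w=t\circ f$ yields $[t,s]x<[t,s]y$. \emph{(b)} For a tree $s$ and $\pi\in\Sigma_n$, the element $\varsigma:=[s,\pi,s]$ (that is, ``$s\circ\pi\circ s^{-1}$'') carries the cone $C_j(s)$ onto $C_{\pi(j)}(s)$ (up to replacing $\pi$ by $\pi^{-1}$, according to conventions) and, read through the zoom-in identifications, restricts on each cone to the identity map of $\fC$; consequently $\varsigma$ preserves $\leqslant$ iff $\pi=\id$, and preserves $Cr$ iff $\pi$ is a rotation.

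Next I would combine these via the factorisation $g=[t,\pi,s]=[t,s]\circ[s,\pi,s]$, which is immediate from the fraction calculus. Since $[t,s]$ preserves $\leqslant$ and $Cr$ by (a): if $g$ preserves $\leqslant$ then $\varsigma=[t,s]^{-1}\circ g$ does, so by (b) $\pi=\id$ and $g=[t,s]\in G$; conversely every $g\in G$ is a tree fraction and so preserves $\leqslant$ by (a). Likewise, if $g$ preserves $Cr$ then $\varsigma$ does, which by (b) forces $\pi$ to be a rotation (a bijection of $\{1,\dots,n\}$ preserving the induced cyclic order is a rotation --- unroll at a fixed point), so $g=[(t,\pi),(s,\id)]$ lies in $\Frac(\cF^T,1)=G^T$; conversely any $g\in G^T$ admits a representative with $\pi$ a rotation, and then $g=[t,s]\circ[s,\pi,s]$ preserves $Cr$ because both factors do. This gives both halves of the proposition.

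The main obstacle is the block description underlying (a), (b), and the geometric fact above: one must carefully unwind the inverse-limit presentation of $\fC$ (together with the embeddings $\iota^+,\iota^-$ and the ``eventually'' definition of $\leqslant$) and the cabling formula for composition in $\cF^V$ to verify that a tree cuts $\fC$ into consecutive cones each order-isomorphic to $\fC$, that tree fractions act order-preservingly, and that $[s,\pi,s]$ merely permutes the top-level cones by $\pi$ while acting trivially inside each. One must also keep the conventions consistent ($\pi$ versus $\pi^{-1}$; ``cyclic permutation'' meaning ``rotation of $\{1,\dots,n\}$''). Everything after that is elementary manipulation of permutations and of linear and cyclic orders, and is insensitive to the particular representative $[t,\pi,s]$ of $g$ that one picks.
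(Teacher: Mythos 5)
This proposition is quoted from \cite[Proposition 6.1]{Brothier22}; the present paper gives no proof of it, so there is nothing internal to compare against. Your argument is correct and is the natural one, built exactly from the ingredients the paper does record: the easy inclusions are already stated in Section \ref{sec:canonical-actions} ($G\act(Q,\leqslant)$ and $G^T\act(Q,Cr)$ are order-preserving), and your converse via the factorisation $[t,\pi,s]=[t,s]\circ[s,\pi,s]$ together with the local formula $\al([t,\pi,s])\restriction_{\Cone(s,j)}=\beta(t,\pi(j))\circ\beta(s,j)^{-1}$ and the fact that the cones $\Cone(s,1)<\cdots<\Cone(s,n)$ are consecutive intervals correctly forces $\pi$ to be trivial (resp.\ a rotation); note also that this block argument needs no faithfulness of $\al$, since it is phrased for a fixed representative.
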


\subsubsection{Canonical monoid action, cones, and compatibility of monoid and group actions}\label{sec:topology-and-compatibility}

{\bf Canonical monoid action.}
Recall that $\cT_*$ is the monoid of pointed trees from $\cF$ with the gluing operation which acts naturally on $\fQ$ via $\beta:\cT_*\act\fQ$. 
Identifying $\fQ$ as a dense subspace of $\fC$ via $\iota^+$, we obtain that the action $\beta(t,i):\fQ\to\fQ$ is continuous for all $(t,i)\in\cT_*$, so can be uniquely extended to $\beta(t,i):\fC\to\fC$. From this we obtain a monoid action $\beta:\cT_*\act\fC$.

{\bf Cones.}
Given a tree $t$ and a leaf $j$ we consider the \emph{cone}
$\Cone(t,j):=\beta(t,j)(\fC)$
inside $\fC$ equal to the closure of 
$\beta(t,j)(\fQ)=\{[(t,j)\circ (z,i)]: (z,i)\in\cT_\ast\}.$
We note that $\Cone(t,j)$ is equal to the half-open interval
$$\{z\in\fC: [t,j]^+\leqslant z < [t,j+1]^+\}$$
and also equal to the closed interval 
$$\{z\in\fC: [t,j]^+\leqslant z \leqslant [t,j+1]^-\}$$
as $[t,j]^+$ and $[t,j+1]^+$ are the images of $o$ and $\omega$ under the order-preserving map $\beta(t,j)$, respectively. 
The cones form a clopen basis of the topology of $\fC.$ We often write elements of $\Cone(t,j)$ as $(t,j)\cdot w:=\beta(t,j)(w)$ hence interpreting them as having for ``prefix" $(t,j)$. 
Any tree $t$ defines a partition $(\Cone(t,\ell):\ell\in\Leaf(t))$ of $\fC$ into finitely many clopen sets.

{\bf Connection between actions.} 
The canonical group action $\al:G^V\act\fC$ is \emph{locally determined} by the canonical monoid action $\beta:\cT_\ast\act\fC$. Indeed
\begin{align*}
\al([t,\pi,s])\restriction_{\Cone(s,j)}=\beta(t,\pi(j))\circ\beta(s,j)^{-1}, \ (s,j)\cdot w\mapsto (t,\pi(j))\cdot w.
\end{align*}
for all $[t,\pi,s]\in G^V$ and $j\in\Leaf(s)$. 

\begin{definition}[FS-affine]\label{def:FS-affine}
A partial transformation $\phi:\fC\to\fC$ is called \emph{FS-affine} if it has for domain and codomain some cones $\Cone(s,j),\Cone(t,i)$ and is of the form
$$(s,j)\cdot w\mapsto (t,i)\cdot w.$$
\end{definition}

\subsubsection{Pruned relations}\label{sec:pruned}
Pruned relations are relations between free pointed trees that are important for understanding the canonical actions. 
We can \emph{prune} any free tree $t$ at its $i$th leaf. This is the pointed tree $\prune(t,i):=(t',i')$ obtained by removing all carets that are not on the path from the root of $t$ to its $i$th leaf. We use the term ``prune'' as we are removing branches to the tree $t$.
Consider two free trees $u,v$ that are equivalent (i.e.,~they define a skein relation).
If $u$ has $n$ leaves, then for each $1\leqslant i\leqslant n$ we say that $\prune(u,i)$ is prune equivalent to $\prune(v,i).$
Let $\sim_{prune}$ be the equivalence relation on free pointed trees generated by it.
A useful result (see \cite[Prop.~2.3]{Brothier-Seelig24}) is that if $(t,i)$ and $(s,j)$ are free pointed trees that are prune equivalent, then they act in the same way on $\fC$, i.e.,~$\beta(t,i)=\beta(s,j)$.

\subsection{Rigid actions for FS groups}

Write $(\fR^V,\tau)$ for the space $\fC$ equipped with its topology $\tau$.
We will show that $G^V\act (\fR^V,\tau)$ is rigid.
However, the restriction of this action to $G$ or even $G^T$ is not. 
We will easily construct two spaces from $(\fR^V,\tau)$ on which $G$ and $G^T$ act in a rigid way.
We start by a lemma.

\begin{lemma}
The jumps of $(\fC,\leqslant)$ are exactly the $(q^-,q^+)$ with $q\in \fQ^*$.
\end{lemma}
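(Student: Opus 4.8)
The plan is to show the two inclusions: every pair $(q^-,q^+)$ with $q\in Q^*$ is a jump, and conversely every jump of $(\fC,\leqslant)$ arises this way. The first inclusion is essentially recorded already in the construction: for $q\in Q^*$ we set $q^-=\sup[o,q^+)$, and since $\iota^+$ and $\iota^-$ have dense range and are order-preserving, the interval $[q^+,\omega]$ is clopen with clopen complement $[o,q^+)$; thus nothing of $\fC$ lies strictly between $q^-$ and $q^+$, so $(q^-,q^+)$ is a jump. I would spell this out by noting $q^-<q^+$ (density of $\iota^-(Q^*)$ rules out equality) and that any $z$ with $q^-\leqslant z\leqslant q^+$ lies in $\overline{[o,q^+)}=[o,q^-]$ or equals $q^+$, forcing $z\in\{q^-,q^+\}$.

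For the converse, suppose $(x,y)$ is a jump of $(\fC,\leqslant)$, i.e. $x<y$ and the open interval $(x,y)$ is empty. The key fact to invoke is that the cones $\Cone(t,\ell)$ form a clopen basis of $\fC$, and that $Q^*$ — identified with $\{[t,j]^+:(t,j)\in\cT_\ast,\ j\neq 1\}$ via $\iota^+$ — is dense in $\fC$. Pick a cone $\Cone(t,\ell)$ containing $y$ but not $x$ (possible since $\fC$ is Hausdorff with a clopen basis and $x\neq y$; shrink a basic clopen neighbourhood of $y$ away from $x$). Since each cone is an interval of the form $\{z:[t,\ell]^+\leqslant z<[t,\ell+1]^+\}$ (as recalled in Section~\ref{sec:topology-and-compatibility}), its left endpoint is $[t,\ell]^+\in\fC$. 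Because $x\notin\Cone(t,\ell)$ and $y\in\Cone(t,\ell)$ with $x<y$, we get $x<[t,\ell]^+\leqslant y$; as $(x,y)$ is a jump and $[t,\ell]^+\neq x$, we conclude $y=[t,\ell]^+$. Now set $q:=[t,\ell]\in Q^*$ (it is in $Q^*$, not the minimum, precisely because $x<y$ shows $y\neq o$, so $\ell\neq 1$ after possibly passing to a subtree — here one uses that $o$ is the class of every tree pointed at its first leaf). Then $y=q^+$, and since $[o,q^+)=[o,y)$ is clopen and $x=\sup[o,y)$ by the jump condition, $x=q^-$ by definition of $\iota^-$. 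Hence $(x,y)=(q^-,q^+)$.

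The main obstacle I anticipate is the bookkeeping in the converse direction: one must carefully justify that the chosen cone's \emph{left} endpoint lands on $y$ (rather than its right endpoint, or an interior point), and that the resulting $q=[t,\ell]$ genuinely lies in $Q^*$ and not at the minimum $o$. This amounts to unwinding the identification of cones with half-open intervals and the description $o=[t,1]$ for trees pointed at the first leaf, together with the fact that $\iota^+$ is injective on $Q^*$ with $\iota^+([t,1])=o$. A clean way to package this is: the function $z\mapsto$ (the unique leaf of $t$ whose cone contains $z$) is locally constant and order-preserving onto $\Leaf(t)$, so the points where it jumps are exactly the left endpoints $[t,\ell]^+$ for $2\leqslant\ell\leqslant|\Leaf(t)|$; every jump $(x,y)$ of $\fC$ is detected by some tree $t$ (by density of the $\Cone$ basis), and then $y$ is one of these left endpoints. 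Everything else is the routine verification of the two set-theoretic inclusions above.
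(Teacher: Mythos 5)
Your proof is correct and follows essentially the same route as the paper: the key step in both is to recognise $y$ as the left endpoint $[t,\ell]^+$ of a cone (the paper decomposes the clopen set $[y,\omega]$ into finitely many cones, while you select a basic cone separating $y$ from $x$ and use the jump condition to force its left endpoint onto $y$ — a cosmetic difference). Your additional verification that each $(q^-,q^+)$ is indeed a jump is the content the paper records earlier ``by construction'', using that $[o,q^+)$ is compact so its supremum $q^-$ is attained in it.
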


\begin{proof}
Consider a jump $(x,y)$ in $(\fC,\leqslant).$
By definition of the order-topology we deduce that $[o,x]$ is clopen with complement $[y,\omega]$.
By the definition of the profinite topology $[y,\omega]$ can be partitioned in a finite union of cones.
This implies that $y=q^+$ for a certain $q\in \fQ^*.$
Necessarily, since $(x,q^+)$ is a jump we must have $x=q^-.$
\end{proof}

Write $\fC^*$ for $\fC$ without its two extremities $o,\omega$ and recall that $\fQ^*:=\fQ\setminus\{o\}$.
Define the equivalence relation $\sim$ on $\fC$ so that $q^-\sim q^+$ for all $q\in \fQ^*$ and moreover $o\sim \omega.$
Denote by $\fR=\fR^F$ and $\fR^T$ the quotient spaces $\fC^*/\sim$ and $\fC/\sim$, respectively.

\begin{lemma}
The space $(\fR,\leqslant)$ is isomorphic to the Dedekind completion of $(\fQ^*,\leqslant)$.
Moreover, there exists an isomorphism $f:\fR\to \Ded(\fQ^*)$ that restrict to the identity on $\fQ^*$ with respect to the obvious inclusions. 
Similarly, $(\fR^T,Cr)$ is isomorphic to the Dedekind completion of $\fQ^*$ equipped with the cyclic order deduced from $\leqslant.$
\end{lemma}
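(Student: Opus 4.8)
The plan is to present $E$ as a dense, Dedekind-complete totally ordered space without extremities that contains $Q^*$ as an order-dense subset, and then to appeal to the uniqueness of the Dedekind completion; the explicit isomorphism will read off a cut from a point. Write $\pi\colon\fC^*\to E$ for the quotient map. First I would verify that the composite $Q^*\xrightarrow{\iota^+}\fC^*\xrightarrow{\pi}E$ is an injective order-embedding with order-dense image. Injectivity rests on the observation that no point of the form $q^-$ belongs to $\iota^+(Q^*)$: were $q^-=\iota^+(q')$, then density of $(Q^*,\leqslant)$ would place a point of $\iota^+(Q^*)$ strictly between $q^-$ and $q^+=\iota^+(q)$, contradicting the previous lemma that $(q^-,q^+)$ is a jump. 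Order-density of the image in $E$ then follows from the topological density of $\iota^+(Q^*)$ in $\fC$ and the fact that every $\sim$-class has at most two elements, so any nonempty open interval of $\fC^*$ that is not a single jump meets $\iota^+(Q^*)$ in a point not identified with either of its endpoints.

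Next I would establish the two remaining structural properties of $E$. That $(E,\leqslant)$ is dense and has no extremities is immediate from the previous lemma: the jumps of $(\fC,\leqslant)$ are exactly the pairs $(q^-,q^+)$, all collapsed in $E$, while $o$ and $\omega$ are not endpoints of any jump and are simply removed in passing to $\fC^*$, so no new extremity appears. That $(E,\leqslant)$ is Dedekind complete carries the real content: since $\fC$ is compact Hausdorff in its order topology it is a complete lattice, so given a Dedekind cut $A$ of $E$ one sets $s:=\sup_\fC\pi^{-1}(A)$, checks $s\in\fC^*$ (using that $A$ is nonempty and proper and $\pi^{-1}(A)\subseteq\fC^*$), and shows $\pi(s)$ realizes $A$, i.e.\ $A=\{\bar x\in E:\bar x<\pi(s)\}$. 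The verification divides into the case $s\in\pi^{-1}(A)$, which is impossible because it would make $\pi(s)$ a maximum of the cut $A$, and the complementary case, where one uses that a set with supremum $s\notin\pi^{-1}(A)$ is cofinal below $s$ and that $\pi^{-1}(A)$ is a down-set. With density, absence of extremities, and Dedekind completeness in hand, uniqueness of the Dedekind completion gives a unique order-isomorphism $E\to\Ded(Q^*)$ fixing $Q^*$; concretely it is $f\colon E\to\Ded(Q^*)$, $\bar x\mapsto\{q\in Q^*:\iota^+(q)<x\}$. One checks this is a well-defined function of $\bar x$ (independent of the representative $x$, precisely because no $\iota^+(q)$ lies in a half-open interval $[q'^-,q'^+)$), that it lands in $\Ded(Q^*)$, that it is order-preserving, injective by order-density, surjective by Dedekind completeness of both sides, and that it restricts to the canonical inclusion $q\mapsto\{q'\in Q^*:q'<q\}$.

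For the cyclic statement I would rerun the argument with the cyclic order $Cr$ in place of $\leqslant$: combining the previous lemma with the noted fact that $(\omega,o)$ is a jump for $Cr$, the jumps of $(\fC,Cr)$ are the $(q^-,q^+)$ together with the single extra jump $(\omega,o)$, all collapsed in $E^T=\fC/\sim$, so $(E^T,Cr)$ is a dense cyclically ordered space; Dedekind completeness again descends from compactness of $\fC$; and $Q^*$, with the cyclic order deduced from $\leqslant$, sits in it order-densely. The analogue of the uniqueness of the Dedekind completion for cyclic orders then yields $(E^T,Cr)\simeq\Ded(Q^*)$. It is worth flagging in the write-up that the extremity $o=\omega$ of $\fC$, which is absent from $Q^*$, reappears in $E^T$ as exactly the point filling the collapsed jump $(\omega,o)$, which is why the cyclic completion has one more point than the quotient $E$ coming from the linear picture. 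The main obstacle I expect is the Dedekind completeness of $E$ --- the cut-realization step --- since, although each sub-verification is routine, one must carefully track the two representatives of a collapsed pair and the two deleted extremities, and the very same bookkeeping recurs when checking that $f$ is well defined on $\sim$-classes.
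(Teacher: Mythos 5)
Your proposal is correct and matches the paper's argument in substance: the paper constructs the isomorphism $\Ded(Q^*)\to E$ by sending a cut $A$ to the image of $\sup_{\fC}\iota^+(A)$ (using compactness of $\fC$ and the classification of jumps as the pairs $(q^-,q^+)$), and exhibits as inverse exactly your map $\bar x\mapsto\{q\in Q^*: \iota^+(q)<x\}$. Your extra step of verifying density, absence of extremities, and Dedekind completeness of $E$ so as to invoke uniqueness of the completion is just a repackaging of the same verifications, so the two proofs are essentially identical.
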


\begin{proof}
We only prove the first statement. The second can be shown using a similar argument.
Let $A\subset \fQ^*$ be a Dedekind cut.
Observe that the topological closure of $\iota^+(A)$ inside $\fC$ is a closed interval of the form $[o,x]$ for some $x\in \fC^*.$
Write $j:\fC^*\onto \fR$ for the quotient map. 
We claim that $f:A\mapsto j(x)$ defines an isomorphism $(\Ded(\fQ^*),\leqslant)\to (\fR,\leqslant)$ that produces a commutative diagram $f\circ C=j\circ \iota^+$ for the canonical inclusion $C:\fQ^*\into \Ded(\fQ^*),
x\mapsto(o,x)$
and for $j\circ\iota^+:\fQ^*\into \fR$.
It is immediate that $f$ is order-preserving.
Now, if $q\in \fQ^*$, then it defines the Dedekind cut $C(q):=A_q=(o,q)$ so that $\overline{\iota^+(A_q)}=[o,q^-]$ and thus $j(A_q)=j\circ \iota^+(q)$.
It remains to show that $f$ is bijective.
Consider $g:\fR\to \Ded(\fQ^*)$ defined as $g(x)=\{ q\in \fQ^*:\ j\circ\iota^+(q)<x\}.$
One can check that $g$ is the inverse of $f$ implying that $f$ is indeed bijective.
\end{proof}

\begin{theorem}\label{theo:rigid-FS}
The action $G\act \fC$ (resp.~$G^T\act \fC$) factorises into an order-preserving action $G\act (\fR,\leqslant)$ (resp.~$G^T\act (\fR^T,Cr)$).

Assume now that the FS category $\cF$ is \emph{simple} (which implies $\al:G^V\act\fC$ is faithful). Write $G^0\lhd G$ for the normal subgroup of $g\in G$ acting trivially around $o$ and $\omega$.

We have that:
\begin{itemize}
\item $D(G^0)\act (\fR,\leqslant)$ is O-rigid; 
\item $D(G^T)\act (\fR^T,Cr)$ is C-rigid; and
\item $D(G^V)\act (\fR^V,\tau)$ is T-rigid.
\end{itemize}
\end{theorem}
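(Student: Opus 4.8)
The plan is to verify, for each $X=F,T,V$, the hypotheses of the relevant reconstruction theorem: Theorem~\ref{theo:O-rigid} for $X=F$, Theorem~\ref{theo:C-rigid} for $X=T$, and the definition preceding Theorem~\ref{theo:T-rigid} for $X=V$. The order-theoretic conditions are checked on the dense invariant subset $Q^*$, the topological ones on $\fC$ directly. \textbf{Factorisation.} By Proposition~\ref{prop:FS-group-order}, $G$ preserves $\leqslant$ and $G^T$ preserves $Cr$, so each fixes the extremities $o,\omega$ and permutes the jumps of $\fC$; hence $\sim$ is invariant and the actions descend to order-preserving actions $G\act(E,\leqslant)$ and $G^T\act(E^T,Cr)$, which is the first assertion. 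By the lemma above, $(E,\leqslant)$ and $(E^T,Cr)$ are the Dedekind completions of $Q^*$ (for $\leqslant$, resp.\ for $Cr$), inside which $Q^*$ sits as a dense invariant subset; moreover $(Q^*,\leqslant)$ is dense without extremities (no minimum since $Q$ is dense with minimum $o\notin Q^*$, and no maximum since $Q$ has none). Restricting to the derived subgroups keeps the actions order- resp.\ cyclic-order-preserving, so it remains to check faithfulness, a non-trivial bounded element, and $n$-o-transitivity.

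\textbf{Easy hypotheses and T-rigidity.} The jump endpoints $\iota^\pm(Q^*)$ meet every cone, hence are dense in $\fC$; an element of $G^V$ acting trivially on $Q^*\subseteq E$ (resp.\ $E^T$) therefore fixes a dense subset of $\fC$, so is the identity of $\fC$, and by faithfulness of $\al$ (our hypothesis) is trivial. Thus all three actions are faithful. An element of $K$ is trivial on clopen neighbourhoods of $o$ and $\omega$, hence has support in a proper closed interval, i.e.\ is bounded; as $\{e\}\neq[K,K]\subseteq K$ (non-triviality from Theorem~\ref{theo:simplicity}), $[K,K]$ has a non-trivial bounded element. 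Also $\fC=E^V$ is profinite, hence locally compact, Hausdorff and without isolated points (every cone $\Cone(u,i)$ splits into the two non-empty subcones obtained by bifurcating its $i$th leaf with a caret, so no cone is a singleton). For T-rigidity it remains only to prove local density of $[G^V,G^V]\act\fC$. Given open $U\ni p$, shrink $U$ to a cone $C$ with $p\in C\subseteq U$; for a cone $D\subseteq C$ with $p\notin D$, pick a cone $D'\ni p$ inside $C$ disjoint from $D$ and, by directedness of $\cT$, a tree $u$ whose standard partition refines $C,D,D'$. Let $A\subseteq D'$ be the $u$-cone through $p$ and $B\subseteq D$ a $u$-cone; the element $g=[u,\pi,u]$, with $\pi$ transposing the leaves of $u$ indexing $A$ and $B$, lies in $G^V$, is supported in $A\cup B\subseteq U$, maps $p$ into $D$, and has trivial image under the abelianisation map of Theorem~\ref{theo:abelianisation} (since $\chi(u)-\chi(u)=0$), so $g\in[G^V,G^V]$. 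Hence $[G^V,G^V]_U\cdot p$ meets every subcone of $C$ and its closure is the clopen neighbourhood $C$ of $p$; local density holds, and Theorem~\ref{theo:T-rigid} gives T-rigidity of $[G^V,G^V]\act(E^V,\tau)$.

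\textbf{High transitivity of the derived subgroups.} It remains to establish $2$-o-transitivity of $[K,K]$ on $(Q^*,\leqslant)$ and $3$-o-transitivity of $[G^T,G^T]$ on $(Q^*,Cr)$. First, $G=G^F$ is $n$-o-transitive on $Q^*$ for every $n$, and $G^T$ cyclically so: given chains $q_1<\cdots<q_n$ and $r_1<\cdots<r_n$ in $Q^*$, use directedness of $\cT$ to find trees $s,t$ exhibiting the $q_i$, resp.\ $r_i$, as cone boundaries, then refine $s$ and $t$ by carets until the numbers of leaves in the initial block, in each of the $n-1$ middle blocks, and in the final block agree; the resulting tree pair (with a cyclic permutation inserted in the $T$-case) defines an element of $G$ (resp.\ $G^T$) carrying $q_i\mapsto r_i$. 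Second, $[K,K]$ and $[G^T,G^T]$ are normal in $G$ and $G^T$ (as $K\lhd G$), and for any cone $C$ with $o<\min C$ and $\max C<\omega$ we have $G^F_C\subseteq K$ with $G^F_C\cong G^F$ (conjugate by the homeomorphism $\fC\cong C$ and extend by the identity), so $[K,K]$ contains $[G^F_C,G^F_C]\cong[G^F,G^F]\neq\{e\}$ supported in $C$; thus the derived subgroups act non-trivially on every subinterval. A standard bootstrap now concludes: a non-trivial normal subgroup of an order-preserving $2$-o-transitive group is transitive (conjugate a non-trivial element across, using order-preservation to control the direction), and then, inductively, since $\Stab_G(y)$ is $n$-o-transitive on $\{z:z>y\}$ for all $n$ and $\Stab_{[K,K]}(y)$ contains $[G^F_C,G^F_C]\neq\{e\}$ for an interior cone $C$ to the right of $y$, the subgroup $[K,K]$ is $n$-o-transitive for all $n$; likewise $[G^T,G^T]$. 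Feeding $2$- resp.\ $3$-o-transitivity into Theorems~\ref{theo:O-rigid} and~\ref{theo:C-rigid} yields O-rigidity of $[K,K]\act(E,\leqslant)$ and C-rigidity of $[G^T,G^T]\act(E^T,Cr)$.

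The step I expect to require the most care is the last one. The familiar ``compactly supported $\Rightarrow$ perfect'' commutator trick is unavailable here, because an infinite product of shifts is not finite-piecewise-FS-affine, which is precisely why the argument routes through the normal-subgroup bootstrap rather than through perfectness of $K$; and the tree-pair bookkeeping underpinning the $n$-o-transitivity of $G$ and $G^T$ themselves, while routine, is where the combinatorics of the FS category $\cF$ actually enters.
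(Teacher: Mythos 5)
Your proof is correct and follows the same skeleton as the paper's --- verify the hypotheses of Theorems \ref{theo:O-rigid}, \ref{theo:C-rigid}, and \ref{theo:T-rigid} on the dense invariant subset $Q^*$ (resp.\ on $\fC$ itself) --- but the two substantive sub-steps are discharged by genuinely different means. For local density of $[G^V,G^V]$, the paper builds an element supported in the three adjacent cones $\Cone(t,j-1)\cup\Cone(t,j)\cup\Cone(t,j+1)$ and then invokes a construction from the prequel to replace it by a commutator; you instead take a transposition $[u,\pi,u]$ supported in two disjoint subcones and observe that it has trivial image under the map of Theorem \ref{theo:abelianisation} (whose kernel is $[G^V,G^V]$, the derived subgroup being characteristic), which is cleaner and avoids the external citation. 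For the $2$- and $3$-o-transitivity of $[K,K]$ and $[G^T,G^T]$ on $Q^*$, the paper simply cites Claims 1 and 2 from the proof of Theorem 3.5 of \cite{Brothier-Seelig24}; you give a self-contained route --- high o-transitivity of $G$ and $G^T$ by block-matching of tree pairs, followed by the standard normal-subgroup bootstrap, conjugating a non-trivial element of $[K,K]$ supported in a prescribed interior cone across via the ($n$-o-transitive) stabiliser of the already-placed points. This is more work but makes the theorem independent of the prequel, and the ingredients you need (an embedding $G^F\into G^F_C$ landing $[G^F,G^F]$ inside $[K,K]$, and the fact that a non-trivial element must move a point of the dense subset $\iota^+(Q^*)$) are all available. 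Two minor slips worth fixing: $G^T$ does \emph{not} fix $o$ and $\omega$ individually --- it merely permutes the jumps of the cyclic order, of which $(\omega,o)$ is one, and that is what actually makes $\sim$ invariant; and in the bootstrap you only ever use the embedding $G^F\into G^F_C$, not the asserted isomorphism $G^F_C\cong G^F$, whose surjectivity is not obvious and is not needed.
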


\begin{proof}
We prove the $F$-case first.
By construction $(\fR,\leqslant)$ is a totally ordered set that is dense and has no extremities.
Since $G\act \fC$ is order-preserving we immediately obtain that it restricts to an action on $\fC^*$ and on the quotient $\fR:=\fC^*/\sim.$
Using the previous lemma we can identity the inclusion $\fQ^*\subset\fR$ with $\fQ^*\subset \Ded(\fQ^*).$
The assumption and Theorem \ref{theo:simplicity} implies that $G\act \fQ^*$ is faithful implying that the action on the Dedekind completion is faithful. Of course, the restriction of the action to $D(G^0)$ continues to be faithful.
By definition of $G^0$, all elements of $G^0$ (and thus of $D(G^0)$) are bounded.
In order to apply Theorem \ref{theo:O-rigid} it is then sufficient to show that $D(G^0)\act (\fQ^*,\leqslant)$ is 2-o-transitive.
This follows easily from Claims 1 and 2 in the proof of Theorem 3.5 in \cite{Brothier-Seelig24}. 
In fact, it is even true that $D(G^0)\act \fQ^*$ is highly-o-transitive.

The $T$-case is rather identical. Let us explain the $V$-case.
We know that $\fR^V=\fC$ is a profinite space. 
Hence, it is compact Hausdorff without isolated points. 
By assumption and Theorem \ref{theo:simplicity} we have that $G^V\act\fR^V$ is faithful and is by construction an action by homeomorphisms.
To apply Theorem \ref{theo:T-rigid} it is sufficient to show that the action of $\Gamma:=D(G^V)$ on $\fR^V$ is locally dense.  
Fix $U\subset \fR^V$ open, a point $x\in U$ and let us show that the closure $\overline{\Gamma_U\cdot x}$ is a neighbourhood of $x$ (where recall $\Gamma_U$ is the subgroup of $g\in\Ga$ supported in $U$).
It is sufficient to show that there exists a cone $x\in C\subset U$ so that for each subcone $D\subset C$ there exists $g\in\Ga_C$ satisfying $g(x)\in D.$
Hence, take any pointed tree $(t,j)$ so that $C:=\Cone(t,j)$, as well as $\Cone(t,j-1)$ and $\Cone(t,j+1)$, are contained inside $U$. Then any subcone $D\subset C$ can be written $\Cone(tf,\ell)$ where $f$ is a forest and $\ell$ is a descendant of $j$. Here we take $f$ so that only its $j$th tree is non-trivial.
Now, the partial transformation $\phi:=\beta(tf,\ell)\circ \beta(t,j)^{-1}$ sends $C$ into $D$.
From there, we then choose any forest $f'$ having only trivial trees except for its $(j-1)$th and $(j+1)$th ones, and is such that that $f$ and $f'$ have same number of leaves.
Take now $\sigma$ a bijection between the leaves of $f$ and $f'$ sending the leaves of $tf'$ equal to the $j$th leaf of $t$ to the leaf $\ell$ of $tf.$ 
Additionally, any leaf of $tf'$ numbered $i$ that is not a descendant of the $(j-1),j$, or $(j+1)$th tree of $f'$ is sent to $i.$ We obtain that $g:=[tf,\sigma,tf']$ is in $G^V$, is supported in $\Cone(t,j-1)\cup\Cone(t,j)\cup\Cone(t,j+1)\subset U$ and restricts to $\phi$ on $\Cone(t,j).$
Now, to transform $g\in G^V$ into an element of $D(G^V)$ we case use a standard construction explain in \cite[Prop.~3.4]{Brothier-Seelig24}.
\end{proof}

\begin{remark}\label{rem:rigid-non-homeo}
\begin{enumerate}
\item Since the rigidity of an action is preserved by taking larger groups we deduce that $G^X\act \fR^X$ is rigid (in the appropriate sense) for $X=F,T,V.$
\item 
The order topologies of $\fR$ and $\fR^T$ are the quotient topologies for the maps $\fC^*\onto \fR$ and $\fC\onto \fR^T.$ When $\fR$ and $\fR^T$ are considered as topological spaces we have that $G\act \fR$ and $G^T\act \fR^T$ are actions by homeomorphisms and are T-rigid. 
\item 
Note that $G^T\act \fR^V$ is not locally dense (which explains why $\fR^V$ is not \emph{the} T-rigid space of $G^T$). Indeed, $G^T_U\cdot q=\{q\}$ for any $q\in \fQ$ and neighbourhood $U$ of $q$ (where recall $G^T_U$ is the subgroup of $g\in G^T$ supported in $U$).
\item 
The space $(\fR,\leqslant)$ is complete dense totally ordered without endpoints. When $\cF$ has countably many colours we get that $(\fR,\leqslant)$ is separable and thus isomorphic to the unit open real interval equipped with its usual order (see for instance \cite[Chap.~6]{Rosenstein82}). 
Similarly, when $\cF$ has countably many colours $(\fR^T,Cr)$ is isomorphic to the circle and $(\fR^V,\leqslant)$ to the Cantor space both equipped with their usual order.
\item We only know one example of an Ore FS category $\cF$ where the triple of spaces $(\fR,\fR^T,\fR^V)$ is not isomorphic to the unit real open interval, the circle, and the Cantor space. 
This happens for example 2.5.2 in \cite{Brothier-Seelig24} where the colour set $S$ is the real interval $(0,1)$ and for all $r,s\in S$ we have the skein relation 
$$Y(s)\circ (Y(r)\ot I) \sim Y(r)\circ (I\ot Y(x)) \text{ where } x=rs+x(1-rs).$$
Then, $G$ is the group of all order-preserving finite piecewise affine homeomorphisms of $(0,1)$. We have $\fR\simeq (0,1)$ and $\fR^T\simeq \R/\Z$, but $\fR^V$ is not second countable.
\item 
The spaces $\fR,\fR^T,\fR^V$ are never pairwise homeomorphic since $\fR$ is non-compact and connected, $\fR^T$ is compact and connected, and $\fR^V$ is compact and totally disconnected. 
Since the rigid spaces are remembered by the groups we deduce that the type $F,T,V$ of an FS group is remembered.
\end{enumerate}
\end{remark}

\subsection{Full groups}\label{sec:full-groups}
Consider a topological space $Z$ and a group acting on it faithfully by homeomorphisms $\Ga\act Z$. 
The \emph{full group of $\Gamma$} is the subgroup $[[\Gamma\act Z]]\subset\Homeo(Z)$ of $h:Z\to Z$ satisfying that there exists a \emph{finite} open covering $\{U_i\}_i$ of $Z$ and some group elements $\{\ga_i\}_i$ in $\Gamma$ so that $h\restriction_{U_i}=\ga_i\restriction_{U_i}$ for all $i.$
Note that certain authors do not require finiteness of the cover. 
The two definitions coincide when $Z$ is compact but otherwise produce two different groups in general (for instance if one considers the standard action of Thompson's group $F\act (0,1)$). The next proposition computes various full groups.

\begin{proposition}\label{prop:local-completion}
Let $\cF$ be an Ore FS category with FS groups $G^F$, $G^T$, $G^V$ and faithful canonical action $\al:G^V\act\fC$. Let $G^0\lhd G$ be the normal subgroup whose elements fix a neighbourhood of the extremities of $\fC$ under $\al:G^V\act\fC$. Let $\ga^X:G^X\act\fR^X$ be the rigid action of $G^X$ as constructed in the previous section.
We have the following equalities:
\begin{enumerate}
    \item $[[D(G^0)\act \fR^F]]=\gamma^F(G^0)$;
    \item $[[G^F\act\fR^F]]=\gamma^F(G^F)$;
    \item $[[ D(G^T)\act\fR^T]] = \gamma^T(G^T)$;
    \item $[[ D(G^T)\act\fR^V]] = \gamma^V(G^V)$
\end{enumerate}
In particular, item (4) implies $[[D(G^V)\act \fR^V]]=\gamma^V(G^V).$
\end{proposition}

\begin{proof}
Let $\cF$ be an Ore FS category as above and let $\cT$ be its set of trees. We will start by proving item (4) as the others will follow from it.
\vspace{2mm}

{\it Proof of item (4).} Write $H^V$ for $[[D(G^T)\act\fR^V]]$ and since the canonical action of $G^V$ is faithful we may identify $G^V$ with the subgroup $\gamma^V(G^V)\subset\Homeo(\fR^V).$
As usual, we identify $G^T$ with a subgroup of $G^V.$
Fix $h\in H^V$ and let us show that $h\in G^V$. 
By definition of full groups there exists a finite open cover of $\fR^V$ so that $h$ coincides with an element of $D(G^T)$ on each open set in the cover.
Each open set can be subdivided into finitely many disjoint cones. 
Up to further subdividing we may assume that $h$ is \emph{FS-affine} on each of the cones (see definition \ref{def:FS-affine}). 
Hence, there exists a finite family of cones $(C_1,\dots, C_n)$ partitioning $\fR^V$ such that $h$ is FS-affine on each $C_k$. 
This means there exist pointed trees $(x_k,i_k)$ and $(y_k,j_k)$ with $1\leqslant k\leqslant n$, such that $h$ restricted to $C_k$ is given by the formula
$$(x_k,i_k)\cdot w\mapsto (y_k,j_k)\cdot w,\ w\in\fR^V.$$

The rest of the proof consists of showing up to even further subdivisions, we may assume that the trees $x_k$ and $y_k$ do not depend on $k$.

Having common right multiples in $\cF$ assures the existence of a tree $s_0\in\cT$ and forests $f_k\in\cF$ satisfying that $s_0=x_k\circ f_k$ for all $1\leqslant k\leqslant n$. 
Let $m$ be the number of leaves of $s_0$ and write $f_{k,l}$ for the $l$th tree of $f_k$. 
For any $1\leqslant\ell\leqslant m$ and any $1\leqslant k\leqslant n$ we have that $h$ restricted to $\Cone(s_0,\ell)$ is the transformation
$$(s_0,\ell)\cdot w\mapsto (y_k,j_k)\circ(f_{k,j_k},\widehat\ell)\cdot w$$
for some leaf $\widehat\ell$ of $f_{k,j_k}$. 
Hence, up to further growing the trees $y_k$, we may assume that there exists a tree $s_0$ with $m$ leaves and pointed trees $(z_1,\nu_1),\dots,(z_m,\nu_m)\in\cT_\ast$ satisfying that $h$ restricted to $\Cone(s_0,\ell)$ is described by
\begin{align}\label{eq:h-formula}
(s_0,\ell)\cdot w\mapsto (z_\ell,\nu_\ell)\cdot w,    
\end{align}
where $1\leqslant \ell\leqslant m$ and $w\in\fR^V$. 
Also, since $h$ is a homeomorphism the family of cones $(\Cone(z_\ell,\nu_\ell):1\leqslant\ell\leqslant m)$ is
\begin{enumerate}
    \item\label{eq:disjoint} pairwise disjoint; and
    \item\label{eq:cover} covers $\fR^V$.
\end{enumerate}
We have made the $x_k$'s a fixed tree $s_0$. We will now work on the $z_\ell$'s.
Again, since $\cF$ has common right multiples, there exists a tree $t\in\cT$ and forests $g_\ell\in\cF$ such that $$t=z_\ell\circ g_\ell \text{ for all } 1\leqslant \ell\leqslant m.$$
Write $M$ for the number of leaves of $t$. We write $g_{\ell,\nu_\ell}$ for the $\nu_\ell$th tree of $g_\ell$. Let us consider the forest
\begin{align*}
g:=g_{1,\nu_1}\ot g_{2,\nu_2}\ot\cdots\ot g_{m,\nu_m},
\end{align*}
set $s:=s_0\circ g$, and write $N$ for the number of leaves of $s$.

\vspace{2mm}
{\it Claim 1: There exists a bijection $\pi:\Leaf(s)\to\Leaf(t)$, hence $N=M$.}
\vspace{2mm}

Let $1\leqslant p\leqslant N$ be a leaf of $s=s_0\circ g$. There are unique $1\leqslant\ell\leqslant m$ and $\widehat p\in\Leaf(g_{\ell,\nu_{\ell}})$ such that $p$ corresponds to the $\widehat p$th leaf of the $\ell$th tree in $g$. Let $1\leqslant\pi(p)\leqslant M$ be the leaf of $t$ such that $\pi(p)$ corresponds to the $\widehat p$th leaf of the tree $g_{\ell,\nu_{\ell}}$ inside $t=z_{\ell}\circ g_{\ell}$. This furnishes a well-defined function $\pi:\Leaf(s)\to\Leaf(t)$ that we now show is bijective. Suppose $1\leqslant p,q\leqslant N$ are distinct leaves of $s$. If $p$ and $q$ are on different trees of $g$, then $\pi(p)$ and $\pi(q)$ are leaves of $t$ that are descendants of pointed trees whose cones are disjoint by (\ref{eq:disjoint}). Hence, $\pi(p)\not=\pi(q)$. If $p$ and $q$ are on the same tree in $g$, then their relative position remains unchanged when sent via $\pi$ to leaves of $t$. Hence, if $p\not=q$, then $\pi(p)\not=\pi(q)$. We have shown $\pi$ is injective. Now fix a leaf $1\leqslant p\leqslant M$ of $t$. By (\ref{eq:cover}), $p$ is a descendant of some pointed tree $(z_\ell,\nu_\ell)$ inside $t$, i.e.,~$p$ is on the $\nu_\ell$th tree of $g_\ell$ inside $t=z_\ell\circ g_\ell$. It is now easy to find a leaf of $s$ sent to $p$ under $\pi$. This completes the proof of the claim. 

\vspace{2mm}
Consequently, it makes sense to build the fraction $[t,\pi,s]\in G^V$, which we now show acts identically to $h$.

\vspace{2mm}
{\it Claim 2: We have that $\al([t,\pi,s])=h$.}
\vspace{2mm}

It is enough to show that $\al([t,\pi,s])$ restricted to each $\Cone(s_0,\ell)$ has the formula (\ref{eq:h-formula}). Fix $1\leqslant\ell\leqslant m$ and let $w\in\fR^V$. It follows that $w=(g_{\ell,\nu_\ell},\mu)\cdot\widehat w$ for a unique leaf $\mu$ of $g_{\ell,\nu_\ell}$ and $\widehat w\in\fR^V$. It follows by definition of $\pi$ that $(s_0,\ell)\circ(g_{\ell,\nu_\ell},\mu)\cdot\widehat w$ is sent to $(z_\ell,\nu_\ell)\circ(g_{\ell,\nu_\ell},\mu)\cdot\widehat w$ by $\al([t,\pi,s])$. This holds no matter what leaf of $g_{\ell,\nu_\ell}$ that $w$ passes through. Hence, $\al([t,\pi,s])$ restricted to $\Cone(s_0,\ell)$ is the transformation
\begin{align*}
(s_0,\ell)\cdot w\mapsto(z_\ell,\nu_\ell)\cdot w.
\end{align*}
This is precisely the formula in (\ref{eq:h-formula}). As such, we have $\al([t,\pi,s])=h$.

We have shown that $H^V\subset G^V$. 
To prove the opposite inclusion it is sufficient to show that given two (non-trivial) pointed trees $(t,i), (s,j)$ there exists $g\in D(G^T)$ satisfying that $g\restriction_{\Cone(s,j)}$ is the transformation
$$(s,j)\cdot w\mapsto (t,i)\cdot w.$$
This follows from a standard argument that has been carefully explained in \cite[Prop.~3.4]{Brothier-Seelig24}. 
Hence, $G^V\subset H^V$, and this concludes the proof of item (4).
\vspace{2mm}

{\it Proof of item (3).} To avoid confusions we now stop identifying $G^V$ with $\gamma^V(G^V).$
Consider $H^T:=[[D(G^T)\act\fR^T]]$ and fix $h\in H^T$ where note we are now acting on the rigid space $\fR^T$ for the $T$-type group. 
By definition, $h$ is continuous and locally order-preserving on the connected space $(\fR^T,Cr)$. This implies that $h$ preserves the cyclic order $Cr$. 
Hence, there exists a unique lift $h_0:\fR^V\to \fR^V$ through the canonical quotient $\fR^V\onto\fR^T$ that preserves the cyclic order of $\fR^V$.
Now, $h_0$ acts locally like a transformation of $\gamma^V(D(G^T))$ and thus must be in $H^V$. The proof of above yields $H^V=\gamma^V(G^V)$ and thus $h_0\in \gamma^V(G^V)$.
Since $h_0$ is preserving the cyclic order of $\fR^V$ and is in $\gamma^V(G^V)$ we deduce that $h_0\in \gamma^V(G^T)$ by Proposition \ref{prop:FS-group-order}.
This implies that $h\in \gamma^T(G^T)$ and thus $H^T\subset \gamma^T(G^T).$
A similar argument than in the preceding case permits to obtain the converse inclusion.
\vspace{2mm}

{\it Proof of item (2).} For clarity we drop ``$F$'' superscripts. 
Let $H:=[[G\act \fR]]$ and $h\in H.$
The map $h:\fR\to\fR$ is locally order-preserving and is a homeomorphism. Hence, it is order-preserving and thus extends uniquely into a homeomorphism $h_0$ of $\fR^V\setminus\{o,\omega\}$ where recall $o=\min\fR^V$ and $\omega=\max\fR^V.$
It necessarily satisfy $\lim_o h=o$ and $\lim_\omega h=\omega$.
By setting $h_0(o)=o$ and $h_0(\omega)=\omega$ we obtain a homeomorphism of $\fR^V.$
Since on a \emph{finite} open cover it acts like an element of $G$ it acts in particular like an element of $G^V$ on $\fR^V$.
Therefore, $h_0$ is in $\gamma^V(G^V)$ by the proof of above.
Since it is order-preserving it must be in $\gamma^V(G)$ by proposition \ref{prop:FS-group-order}.
This implies that $h\in\gamma(G)$ and thus $[[G\act\fR]]=\gamma(G).$
\vspace{2mm}

{\it Proof of item (1).} Finally, consider $H^0:=[[D(G^0)\act \fR]]$ and $h\in H^0$.
Since $D(G^0)\subset G$ we deduce that $h\in [[G\act\fR]]=\gamma(G).$
The element $h$ coincides on a finite open cover with elements of $D(G^0)$. Each of these open sets can be chosen to be intervals.
Hence, there exist $x<y$ in $\fR$ and $g,k\in D(G^0)$ so that $hz=gz$ for all $z<x$ and $ht=kt$ for all $t>y$ ($x$ is the right endpoint of the first interval and $y$ is the left endpoint of the last interval).
Since $g,k$ are in $D(G^0)\subset G^0$ there exist $x'<x<y<y'$ so that $gz=z$ for all $z<x'$ and $ht=t$ for all $t>y'.$
We deduce that $h\in G^0$ and thus $H^0\subset G^0.$
The converse is a direct application of \cite[Prop.~3.4.(1)]{Brothier-Seelig24}.
The statement is that if $g,h,k$ are in $G^0$, that $A$ is clopen inside $\fC$ and $C$ is a proper cone of $\fR^V$, then $g\restriction_A=[g,khk^{-1}]\restriction_A$ when are satisfied
$$h(C)\cap C=\varnothing \text{ and } \supp(g)\cup A\subset k(C).$$
Indeed, take $g\in G^0$ that we consider acting on the canonical space $\fC$.
There exists a large enough tree $t$ with $n\geqslant 6$ leaves so that $g$ acts trivially on $\Cone(t,1),\Cone(t,2)$ and $\Cone(t,n-1), \Cone(t,n)$.
Consider $C=\Cone(t,3)$ and $h\in G^0$ satisfying $h(C)\subset \Cone(t,4).$
This can be done by taking $h=[t'',t']$ where $t'$ consists in $t$ to which we glue a caret at the second leaf and $t''$ consists in $t$ to which we glue a caret at the fourth leaf. 
We have that $h$ acts trivially on $\Cone(t,1)$ and on $\Cone(t,5)$, and then $h(\Cone(t,3))\subset \Cone(t,4).$
Take now $A=\cup_{j=3}^{n-2}\Cone(t,j)$.
In particular, $A$ contains $\supp(g).$
We now construct $k\in G^0$ satisfying $A\subset k(C).$
Let $s$ be any tree with $n-4$ leaves.
Glue $s$ on the $(n-1)$th leaf of $t$ to form $s'$ and glue $s$ to the third leaves of $t$ to form $s''.$ Then the element $k:=[s'',s']$ acts trivially on $\Cone(t,1)$ and $\Cone(t,2)$ and sends $A$ to $\Cone(t,3)=:C.$
Therefore, $g$ and the commutator $[g,khk^{-1}]$ coincides on the support of $g$.
From there we deduce that $g$ belongs to the full group of $D(G^0)\act \fC$ and then belongs to the full group of $D(G^0)\act \fR.$ This completes the proof.
\end{proof}

\begin{remark}\label{rem:local-completion}
\begin{enumerate}
\item
If $\Lambda\subset\Gamma$ is subgroup such that $[[\Lambda]]=\Gamma$ then for any intermediate subgroup $\Lambda\subset\Pi\subset\Gamma$ we have $[[\Pi]]=\Gamma$.
\item Suppose the canonical action is faithful so we can identify $G^X$ with $\gamma^X(G^X)$ and let $j:\fC\onto\fR^T$ be the canonical quotient. Since $[[G^T\act\fR^V]]=G^V$, one might think 
$[[G\act\fR^T]]=G^T$, though this is not true. Since $G^T$ acting on $\fR^T$ is full, by item (1) of this Remark we have $[[G\act \fR^T]]\subset G^T$. Now if $g\in[[G\act \fR^T]]$, then $g$ fixes $j(o)$. Moreover, $G$ is characterised as the subgroup of $G^T$ whose elements fix $j(o)$, hence $g\in G$. Thus $[[ G\act\fR^T]]=G$.
\end{enumerate}
\end{remark}

\section{Invariants}\label{sec:invariants}

In this section we use the previous rigidity results to give invariants for the simple derived subgroups of FS groups whose canonical action is faithful. To this end we {\bf fix a presented Ore FS category $\cF=\FS\la S|R\ra$ which we assume has faithful canonical group action.} We write $G^X$ for the $X$-type FS group of $\cF$ and $\gamma^X:G^X\act\fR^X$ for its rigid action as constructed above, $X=F,T,V$. As usual we may drop ``$F$'' superscripts.

\subsection{Abelianisation}

We have the following practical proposition for distinguishing between themselves the simple groups $D(G^T)$ (resp.~the simple groups $D(G^V)$).

\begin{proposition}\label{prop:abelianisation-invariant}
The simple group $D(G^Y)$ ``remembers'' $G^Y_{ab}$ for $Y=T,V$. 
In details, this means that if $\phi:D(G^Y)\to\Lambda$ is a group isomorphism, then $\Lambda$ admits a rigid action $\beta:\Lambda\act \mathfrak L$. 
Then take $[[\Lambda]]$ the full group of this action and consider its abelianisation $[[\Lambda]]_{ab}.$
We have that $\phi$ extends to an isomorphism $G^Y\to [[\Lambda]]$ that descends to an isomorphism $G^Y_{ab}\to [[\Lambda]]_{ab}.$
In particular, if $G$ and $L$ are FS groups satisfying $D(G^Y)\simeq D(L^Y)$, then $G^Y_{ab}\simeq L^Y_{ab}.$
Similarly, $D(G^0)$ remembers $(G^0)_{ab}.$
In particular, if $G,L$ are FS groups satisfying $D(G^0)\simeq D(L^0)$, then $(G^0)_{ab}\simeq (L^0)_{ab}.$
\end{proposition}

\begin{proof}
Take $Y=T$ or $V$.
Since $\cF$ is an FS category with faithful canonical action, Theorem \ref{theo:simplicity} implies $D(G^Y)$ is a simple group. 
By Theorem \ref{theo:rigid-FS} the action $\gamma^Y:D(G^Y)\act\fR^Y$ is rigid.
Proposition \ref{prop:local-completion} implies that the full group of this rigid action is $\gamma^Y(G^Y)\subset\Homeo(\fR^Y).$
Assume that $\phi:D(G^Y)\to \Lambda$ is a group isomorphism.
This automatically implies that $\Lambda$ admits a (unique) rigid action $\beta:\Lambda\act \mathfrak L$ and moreover that there exists a unique homeomorphism $f:\fR^Y\to \mathfrak L$ satisfying $f(g\cdot x)=\phi(g)\cdot f(x)$ for all $x\in \fR^Y$ and $g\in D(G^Y).$
Let $[[\Lambda]]$ be the full group of $\Lambda\act \mathfrak L$.
Conjugating by $f$ yields an isomorphism from $G^Y$ to $[[\Lambda]]$ that extends $\phi.$
It is then standard that $f$ descends into an isomorphism of the abelianisations.

A similar argument implies the second statement using that $D(G^0)\act \fR$ is rigid and that $G^0$ is the full group of $D(G^0)\act \fR$.
\end{proof}

Recall that Thompson group $F$ is the group of (finite) piecewise affine homeomorphisms of $[0,1]$ with slopes being powers of $2$ and breakpoints being dyadic rationals.
One may define a similar group by replacing $\frac{1}{2}$ in the definition by the golden ratio $\tau=\frac{\sqrt 5-1}{2}$.
Greenberg defines such a group (when acting on the real line rather than acting on the unit interval) in \cite[Ex.~1.22(b)]{Greenberg87}. Cleary then rediscovered this group and extensively studied it in \cite{Cleary00} and it is now known as \emph{Cleary's golden ratio slope Thompson group} (or \emph{Cleary's group} for short). Burillo, Nucinkis, and Reeves have shown Cleary's group enjoys calculus of tree-pair diagrams and also defined and studied its $T$ and $V$-versions in \cite{Burillo-Nucinkis-Reeves21, Burillo-Nucinkis-Reeves22}. Since $\tau^2+\tau=1$, there are two ways to split the interval $[0,1]$ into two pieces of length $\tau$ and $\tau^2$. These splittings are parametrised by two colours $a$ and $b$, respectively, and one obtains a skein relation:
\begin{align*}
\ClearyB.
\end{align*}
From this latter point of view we may consider these groups as FS groups and include them in a one parameter family of bicoloured FS groups (called the \emph{generalised Cleary groups}) as in the corollary below. For example, here is the skein relation which comes after the one above:
\begin{align*}
\ClearyThree.
\end{align*}

\begin{corollary}[simplicity and rigidity for the family of generalised Cleary groups]\label{cor:Cleary}
For any $n\geqslant 2$ define the $n$th \emph{Cleary FS category}
$$\cC_n:=\FS\la a,b| \lambda_n(a)=\rho_n(b)\rangle$$
where $\lambda_n$ and $\rho_n$ are left and right vines of length $n$, respectively.
This is a simple Ore FS category with FS groups denoted $G_n^F,G_n^T,G_n^V$.
When $n=2$, we recover the Cleary irrational slope Thompson groups. If $D(G_n^X)\simeq D(G_m^Y)$, then $n=m$, and $X=Y$ for all $n,m\geqslant 2$, $X,Y\in\{T,V\}.$ 
Moreover, these latter groups are infinite simple of type $\mathtt F_\infty$.
\end{corollary}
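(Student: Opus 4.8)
The plan is to assemble the statement from results already in hand, together with one short abelianisation computation. First I would record that $\cC_n=\FS\la a,b\mid\lambda_n(a)=\rho_n(b)\ra$ has $\lambda_n$ and $\rho_n$ both with $n+1$ leaves, so Theorem~\ref{theo:Ore-categories} gives that $\cC_n$ is an Ore FS category. Grafting a left-vine of length $n-1$ onto the left leaf of a caret produces a left-vine of length $n$, so $\cC_n$ is exactly the skein presentation~(\ref{eq:skein-presA}) with $t$ the left-vine of length $n-1$ (non-trivial since $n\geqslant2$) and with the $b$-tree the right-vine of length $n$; hence $\cC_n$ is \emph{simple} by the simplicity of~(\ref{eq:skein-presA}) established in \cite{Brothier-Seelig24}, and Theorem~\ref{theo:simplicity} then gives that each $[G_n^X,G_n^X]$ is simple. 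The identification of the $n=2$ groups with Cleary's $F_\tau\subset T_\tau\subset V_\tau$ is \cite[Section~3.6.3]{Brothier22}, and Theorem~\ref{theo:finiteness-properties} shows each $G_n^X$ is of type $\mathrm{F}_\infty$.

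Next I would compute $(G_n^X)_{ab}$ for $X=T,V$. The unique skein relation is $\lambda_n(a)=\rho_n(b)$, and $\lambda_n$ and $\rho_n$ each have exactly $n$ interior vertices, so $\chi(\lambda_n(a))=na$ and $\chi(\rho_n(b))=nb$ in $\Z^{\{a,b\}}$; Theorem~\ref{theo:abelianisation} then yields
\[
(G_n^X)_{ab}\;\simeq\;\Z^{\{a,b\}}/\la\, na=nb,\ a=0\,\ra\;\simeq\;\Z/n\Z\qquad(X=T,V).
\]
This is finite of order $n$, so $[G_n^X,G_n^X]$ has index $n$ in $G_n^X$. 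Since $G_n^X$ contains the infinite group $T$ (resp.\ $V$) via the vertex-colouring $C_a$, it is infinite, hence so is its finite-index subgroup $[G_n^X,G_n^X]$; and a finite-index subgroup of a group of type $\mathrm{F}_\infty$ is again of type $\mathrm{F}_\infty$, so for $X=T,V$ the group $[G_n^X,G_n^X]$ is infinite, simple, and of type $\mathrm{F}_\infty$.

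For the non-isomorphism statement, suppose $[G_n^X,G_n^X]\simeq[G_m^Y,G_m^Y]$ with $X,Y\in\{T,V\}$. By Theorem~\ref{theo:rigid-FS} and Remark~\ref{rem:rigid-non-homeo} the group $[G_n^X,G_n^X]$ is T-rigid with rigid space $E^X$, which (the colour set being finite, hence countable) is the circle $S_1$ when $X=T$ and the Cantor space $\cC$ when $X=V$; as $S_1$ is connected and $\cC$ totally disconnected these are not homeomorphic, so uniqueness of the T-rigid action (Theorem~\ref{theo:T-rigid}) forces $X=Y$. Then Proposition~\ref{prop:abelianisation-invariant} says that each of the isomorphic simple groups $[G_n^X,G_n^X]$ and $[G_m^X,G_m^X]$ remembers its abelianisation, so $\Z/n\Z\simeq(G_n^X)_{ab}\simeq(G_m^X)_{ab}\simeq\Z/m\Z$ and hence $n=m$. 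I do not expect a serious obstacle here: the only computation is the abelianisation, which given Theorem~\ref{theo:abelianisation} reduces to the elementary presentation above, and the one point to be careful with is checking that $\cC_n$ really is an instance of~(\ref{eq:skein-presA}) so that its simplicity may be imported; everything else is a direct application of the rigidity results of Section~\ref{sec:rigid-FS}.
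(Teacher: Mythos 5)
Your proposal is correct and follows essentially the same route as the paper's proof: Ore via Theorem~\ref{theo:Ore-categories}, simplicity by recognising $\cC_n$ as an instance of~(\ref{eq:skein-presA}) with $t$ a left-vine (imported from \cite{Brothier-Seelig24}), the abelianisation $\Z/n\Z$ via Theorem~\ref{theo:abelianisation}, the rigid space (circle vs.\ Cantor set) to recover $X$, and Proposition~\ref{prop:abelianisation-invariant} to recover $n$. The only cosmetic differences are that you compute the abelianisation inline rather than citing it from the prequel, and you spell out the finite-index argument for type $\mathrm F_\infty$ that the paper leaves implicit.
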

\begin{proof}
The observation for the $n=2$ case follows from the work of Burillo, Nucinkis, and Reeves; see \cite{Burillo-Nucinkis-Reeves22}.
We have Ore FS categories by Theorem \ref{theo:Ore-categories}. 
The categories are simple by \cite[Cor.~F]{Brothier-Seelig24}. 
By \cite[Theo.~E]{Brothier-Seelig24} we have that the abelianisation of $G_n^Y$ is isomorphic to $\Z/n\Z$ for $n\geqslant 2$, $Y=T,V$.
Applying the previous proposition yields that $D(G_n^Y)$ remembers $n$.
Now, since $D(G_n^Y)$ is T-rigid, it remembers its rigid space which is the circle when $Y=T$ and the Cantor space when $Y=V$.
Hence, $D(G_n^Y)$ remembers $Y$ as well.
Simplicity of the derived subgroups is given by \cite[Cor.~D]{Brothier-Seelig24}. 
Being of type $\mathtt F_\infty$ comes from having finite abelianisation and \cite[Theo.~8.1]{Brothier22}.
\end{proof}

\subsection{Generalities of germ groups for FS groups} 

Fix a faithful action by homeomorphisms $\eta:\Gamma\act Z$. We say $g,h\in\Gamma$ have the same \emph{germ} at $z\in Z$, written $g\sim_zh$, if $g$ and $h$ act the same on a neighbourhood of $z$. 
This defines an equivalence relation and we write $[g]_z$ for the equivalence class of $g$. We write $[\eta:\Gamma\act Z;z]$ for the subgroup $\Gamma_z\subset\Ga$ of elements fixing $z$ modulo $\sim_z$ which forms the \emph{group} of germs of the action $\eta:\Ga\act Z$ at the point $z$ under the multiplication $[g]_z[h]_z=[gh]_z$. If $\eta$ were a rigid action of $\Gamma$, then the list of all germ groups of $\eta$ would be an invariant of the group $\Gamma$, i.e.,~if $\Gamma\simeq\Gamma'$, then $\Gamma'$ admits a rigid action $\eta':\Gamma'\act Z'$ and there is a homeomorphism $\psi:Z\to Z'$ and a group isomorphism $\theta:\Gamma\to\Gamma'$ conjugating the actions and in particular giving $[\eta:\Gamma\act Z;z]\simeq [\eta':\Gamma'\act Z';\psi(z)]$.

\subsubsection{Quotients of $F$-type FS groups}
Recall that $o,\omega$ are the extremities of the totally ordered space $\fC$ and that $\fC=\fR^V$ where $G^V\act\fR^V$ is the rigid action of $G^V$.
Write $G^0_o,G^0_\omega\lhd G$ for the normal subgroups of $G$ of elements fixing a neighbourhood of $o$ and $\omega$, respectively. Since $o$ and $\omega$ are global fixed points for $\al:G\act\fC$, the germ groups at these points are by definition the quotients
\begin{align*}
[G\act\fC;o]=G/G^0_o\quad\textnormal{and}\quad[G\act\fC;\omega]=G/G^0_\omega.
\end{align*}
\textit{We will see examples where $[G\act \fC;\omega]$ is non-amenable; for instance example \ref{ex:germ-group}.}

\subsubsection{Relating germ groups}\label{sec:relating-germs} Since $D(G)\subset G\subset G^T\subset G^V$, we get a tower of inclusions at the level of germ groups for the \emph{canonical action} $\al:G^V\act \fC$ taken at any point $p\in\fC.$ 
If $p\in\fC^*=\fC\setminus\{o,\omega\}$, then we achieve equality. 
To see why, let $g\in G^V$ and pick a cone $C\subset\fC^*$ containing $p$ on which $g$ acts FS-affinely (recall definition \ref{def:FS-affine})
\begin{align*}
g\restriction_C=\beta(t,\pi(\ell))\circ\beta(s,\ell)^{-1}
\end{align*}
for some trees $t,s$, permutation $\pi$, and leaf $\ell$ which is not the first or last leaf of $s$. Since $p$ is not an extremity, up to passing to a subcone of $C$, we can assume $\pi(\ell)$ is not the first or last leaf of $t$. 
It is now not difficult to build an element of $D(G)$ which acts the same as $g$ on $C$ (see proof of item (1) in proposition \ref{prop:local-completion}). 
Hence, the germ of $g$ at $p$ is the same as the germ at $p$ of an element of $D(G)$, so
\begin{align*}
[D(G)\act\fC;p]=[G\act\fC;p]=[D(G^T)\act\fC;p]=[G^V\act\fC;p]
\end{align*}
as long as $p$ is not an extremity of $\fC$.

Recall the map $j:\fR^V\onto\fR^T$ which glues jumps together 
(If $\cF$ has countably many colours, then $j$ is conjugated to the usual map $\{0,1\}^\N\onto \R/\Z,~x\mapsto\sum_{k\geqslant1}\frac{x_k}{2^k}$). 
Fix $p\in j(\fQ).$ We have that $j^{-1}(p)=\{p^-,p^+\}$, where $p^-\in \fQ^-$ and $p^+\in \fQ^+$ and $(p^-,p^+)$ is a jump. 
In particular, certain open interval $(x,y)$ of $p$ in $\fR^T$ can be lifted via $j$ into two neigbourhoods $(x',p^-]$ and $[p^+,y')$ inside $\fR^V$. 
This induces that the germ of $g\in G^T$ at $p$ corresponds to the product of the two germs of $g$ at $p^-$ and at $p^+$. 
We deduce the following which has been carefully proved in \cite[Prop.~2.12]{Brothier-Seelig26}.

\begin{proposition}
The map \begin{align*}
\widehat{j}:[G^T\act\fR^T;p]&\to[G^T\act\fC;{p^-}]\times[G^T\act\fC;{p^+}]\\
[g;p]&\mapsto([g;p^-],[g;p^+])
\end{align*}
is a group isomorphism for each $p\in j(\fQ).$
\end{proposition}

\subsection{Presentations of certain germ groups}\label{sec:pruning}
In this section we show how to calculate germ groups of the canonical action at its endpoints \emph{as long as said action is faithful.}

Recall $\cF=\FS\la S|R\ra$ is a presented simple Ore FS category. For any free tree $t$ (over $S$) let $\prune_o(t)$ (resp.~$\prune_\omega(t)$) be the word of colours read from the root to its \emph{first} (resp.~last) leaf. 
For example 
\begin{align*}
\prune_o(\vcenter{\hbox{\CompleteABC}})=ab\quad\textnormal{and}\quad\prune_\omega(\vcenter{\hbox{\CompleteABC}})=ac.
\end{align*}
The mappings $t\mapsto\prune_\chi(t)$ extend to (non-monoidal) functors
\begin{align*}
\prune_\chi:\FS\la S\ra\to\Mon\la S\ra, \ f\mapsto\prune_\chi(f), \text{ where }\chi\in\{o,\omega\}.
\end{align*}
Via the universal property for presented FS categories (see \cite[Cor.~1.9]{Brothier22}), the functors $\prune_\chi$ factor through
\begin{align*}
\ov\prune_\chi:\cF\to\Mon\la S|\prune_\chi(R)\ra,
\end{align*}
where $\prune_\chi(R)$ denotes the image of the set of skein relations $R$ under $\prune_\chi\times\prune_\chi$. 
Applying the universal enveloping groupoid functor $U$
yields a functor
\begin{align*}
U(\ov\prune_\chi):\Frac(\cF)\to\Gr\la S|\prune_\chi(R)\ra.
\end{align*}
Recall that the functor $U$ transforms (small) categories into groupoids. It is standard that a (category) presentation for a category $\cD$ gives a (groupoid) presentation for $U(\cD)$; see \cite[Chap.~II, Sec.~3.1]{Dehornoy-Digne-Godelle-Krammer-Michel15}. 
Moreover, when the category admits a calculus of right-fractions, $U$ coincides naturally with the functor $\Frac$.

\begin{proposition}[some germ groups from skein presentations]\label{prop:germ-pres}
Let $\cF=\FS\la S|R\ra$ be an Ore FS category with faithful canonical action. The prune maps define two group isomorphisms
$$[G\act\fC ; o]\simeq \Gr\la S|\prune_o(R)\ra \text{ and } [G\act\fC ; \omega]\simeq \Gr\la S|\prune_\omega(R)\ra.$$
\end{proposition}

Before proving the above proposition let us illustrate the results with an example.

\begin{example}\label{ex:germ-group}
Consider the FS category $\cF$ which is generated by two colours $a,b$ and has unique skein relation
\begin{align*}
\SkeinRelation
\end{align*}
This FS category is known to be Ore and has faithful canonical action; \cite[Sec.~2.1.4]{Brothier-Seelig26}. Hence, the previous proposition applies. 
By reading colours on the sides we deduce
$$[G\act \fC;o]\simeq \Gr\langle a,b| aa=b\rangle\simeq \Z \text{ and } [G\act \fC;\omega]\simeq \Gr\langle a,b| aa=bbb\rangle.$$
Note that the germ group at $\omega$ is non-abelian and even non-amenable.
\end{example}

\begin{proof}[Proof of Proposition \ref{prop:germ-pres}]
We prove 3 claims applied to the endpoint $o$. Arguments easily applied to $\omega$ under obvious modifications. 

\vspace{2mm}
{\it Claim 1: The monoids $\cT_o,\cT_\omega\subseteq\cT_*$ of trees pointed at their first and last leaves are Ore monoids.}
\vspace{2mm}

Recall that for all $(t,\ell)\in\cT_*$ the transformation $\beta(t,\ell):\fC\to\fC$ is injective; \cite[Prop.~2.7]{Brothier-Seelig24}. This implies $\beta(\cT_*)$, and hence any of its submonoids, is left-cancellative. In particular, $\beta(\cT_o)$ is left-cancellative. Now let us show $\beta(\cT_o)$ has common-right-multiples. Let $(t,1),(s,1)\in\cT_o$. Since $\cF$ has common right multiples we are able to pick forests $p,q\in\cF$ satisfying $tp=sq$ in $\cF$. Write $p_1$ (resp.~$q_1$) for the first tree of $p$ (resp.~$q$). It follows that $(p_1,1),(q_1,1)\in\cT_o$, and
\begin{align*}
\beta(t,1)\circ\beta(p_1,1)&=\beta(tp,1)
=\beta(sq,1)
=\beta(s,1)\circ\beta(q_1,1).
\end{align*}
Hence, $\beta(\cT_o)$ has common right multiples, and is thus a (right-)Ore monoid. It admits a fraction group $\Frac(\beta(\cT_o))$.

\vspace{2mm}
{\it Claim 2: The function $\kappa_o:S\to\beta(\cT_o),~s\mapsto\beta(Y_s,1)$ 
induces an isomorphism $\kappa_o:\Gr\la S|\prune_o(R)\ra\to\Frac(\beta(\cT_o))$.}
\vspace{2mm}

The universal property of free monoids implies the map $\kappa_o:S\to\beta(\cT_o)$ defined above extends uniquely to a monoid morphism $\Mon\la S\ra\to \beta(\cT_o)$. This morphism is surjective as $\kappa_o(S)=\{\beta(Y_s,1):s\in S\}$ generates $\beta(\cT_o)$.
Furthermore, if $u\sim v$ is a skein relation in $R$, then 
\begin{align*}
\kappa_o(\prune_o(u))=\beta(u,1)
=\beta(v,1)
=\kappa_o(\prune_o(v)).
\end{align*}
By the universal property of quotients, we obtain that $\kappa_o$ factors through a surjection $\Mon\la S|\prune_o(R)\ra\onto\beta(\cT_o)$. Applying the universal enveloping groupoid functor to this monoid morphism gives a surjective group morphism $\kappa_o:\Gr\la S|\prune_o(R)\ra\onto\Frac(\beta(\cT_o))$. Now using the assumption that the canonical action is faithful we will show $\kappa_o:\Gr\la S|\prune_o(R)\ra\onto\Frac(\beta(\cT_o))$ is injective. From above obtain a surjective group morphism $$\prune_o:G\onto\Gr\la S|\prune_o(R)\ra,~[t,s]\mapsto\prune_o(t)\prune_o(s)^{-1}.$$ Hence, any element $\gamma$ in $\Gr\la S|\prune_o(R)\ra$ can be written as $\prune_o([t,s])$ for some fraction of trees $[t,s]\in G$. If $\gamma\in\ker(\kappa_o)$, then $\beta(t,1)\circ\beta(s,1)^{-1}=\id_{\Cone(s,1)}$, which is to say that $\al([t,s])$ acts trivially on $\Cone(s,1)$. 
Since $\cF$ has common right multiples, we pick forests $p,q\in\cF$ such that $tp=sq$ and rewrite $[t,s]=[sq,sp]$. Observe that the first trees $p_1$ and $q_1$ of $p$ and $q$ must have the same number of leaves as otherwise $\al([t,s])$ would act non-trivially on $\Cone(s,1)$. It follows that $[p_1,q_1]$ is a well-defined fraction of trees in $G$ that must satisfy $\al([p_1,q_1])=\id_{\fC}$ again since $\al([t,s])$ acts trivially on $\Cone(s,1)$. 
Since the canonical action of $\cF$ is faithful we have $[p_1,q_1]=e$. This implies $\gamma=\prune_o([t,s])=e$ and so $\kappa_o$ is injective.

\vspace{2mm}
{\it Claim 3: The morphism $G\onto\Frac(\beta(\cT_o)),~[t,s]\mapsto\beta(t,1)\circ\beta(s,1)^{-1}$ factors through an isomorphism on the group of germs $[G\act\fC;o]=G/G^0_o$.} 
\vspace{2mm}

We now identify the group of germs of $G\act\fC$ at $o$ with $\Gr\la S|\prune_o(R)\ra$. Recall $G_o\lhd G$ consists of all group elements acting trivially on a cone containing $o$. Since
\begin{align*}
\kappa_o(\prune_o([t,s]))=\beta(t,1)\circ\beta(s,1)^{-1}
\end{align*}
we obtain that $G_o=\ker(\kappa_o\circ\prune_o)$. Hence, the surjection $\kappa_o\circ\prune_o:G\onto\Frac(\beta(\cT_o))$ factors through an isomorphism $$G/G_o\to\Frac(\beta(\cT_o)),~[t,s]G_o\mapsto\beta(t,1)\circ\beta(s,1)^{-1}.$$
This completes the proof. 
\end{proof}

\section{A class of examples}\label{sec:examples}

In this section we study a class of examples of FS categories and their groups. We begin by considering all Ore FS categories and reduce our generality by making certain assumptions on the skein presentation as we go. 
Our aim is to classify these groups using their canonical dynamic.

\subsection{Dynamics}\label{sec:dynamics} We begin with a fixed presented Ore FS category $\cF=\FS\la S|R\ra$ which we do \emph{not} assume has faithful canonical action. 
Our goal is to find conditions on $\la S|R\ra$ which ensure $\cF$ has faithful canonical action. 
As usual, we write $\cT$ for the tree set and $\cT_\ast$ for the pointed tree monoid of $\cF$. 
We denote the $X$-type FS group of $\cF$ by $G^X$, $X=F,T,V$, and write $\al:G^V\act\fC$ for its canonical group action and $\beta:\cT_\ast\act\fC$ for its canonical monoid action. In this section we will deal with the Cantor space of infinite binary sequences $\{\0,\1\}^\N$. Given a finite binary word $w$ we will write $\ov w$ for the sequence $www\cdots$ which repeats $w$ infinitely many times. Hence, $\ov\0$ and $\ov\1$ are the constant sequences having value $\0$ and value $\1$, respectively. Given a finite binary word $w$ whose length is $n$ and a binary sequence $p$ we may write $w\cdot p$ whose first $n$ entries are those of $w$ and then whose entries are those of $p$.

\subsubsection{Choosing coordinates for the canonical actions}
\label{sec:sequence-space} 

Recall that $\fQ=\cT_\ast/\sim$ is a quotient of the pointed trees having $\fC$ for completion; see section \ref{sec:canonical-actions}.
The inclusion $\iota^+:\fQ\into \fC$ is analogous to the finitely supported binary sequences $\{\0,\1\}^{(\N)}$ inside $\{\0,\1\}^\N$.
Recall that $\{\0,\1\}^*$ is a monoid with word concatenation and $\cT_\ast$ is a monoid with gluing pointed trees. For each colour $a\in S$ let $\varphi_a:\{\0,\1\}^*\to\cT_\ast$ be the unique monoid morphism having
\begin{align*}
\varphi_a(\0)=(Y_a,\0) \ \textnormal{and} \ \varphi_a(\1)=(Y_a,\1).    
\end{align*}
Note that this is injective. Moreover, if $w$ is a word, then
\begin{align*}
\varphi_a(w\cdot\0)=\varphi_a(w)\circ(Y_a,\0)
\end{align*}
whose class is equal to the class of $\varphi_a(w)$ inside $\fQ.$
Hence, $\varphi_a$ factors through an injection $\varphi_a:\{\0,\1\}^{(\N)}\to \fQ$. We note that $\varphi_a$ is order-preserving and continuous. 

\begin{assumption}\label{ass:1}
For the remainder of the section assume the poset of trees $\cT$ in $\cF$ has a \emph{cofinal sequence of $a$-trees.}
\end{assumption}

This means that for any tree $t\in\cT$ there exists an $a$-tree $s$ (a tree with all vertices coloured by $a$) satisfying $t\leqslant s$, or equivalently, $s=t\circ f$ for some forest $f$.
It follows that any pointed tree can be grown into a pointed $a$-tree. 
Note that the FS categories from the introduction of type (\ref{eq:skein-presC}) do \emph{not} have a cofinal sequence of $a$-trees, whereas the categories of type (\ref{eq:skein-presA}) and (\ref{eq:skein-presB}) do. 
As a consequence of having a cofinal sequence of $a$-trees we have that $\varphi_a:\{\0,\1\}^{(\N)}\to \fQ$ is surjective and extends uniquely to an order-preserving homeomorphism  $\varphi_a:\{\0,\1\}^\N\to\fC$. 
We obtain actions $\widehat{\beta}:\cT_*\act\{\0,\1\}^\N$ and $\widehat{\al}:G^V\act\{\0,\1\}^\N$ through conjugating the actions $\beta$ and $\al$ by $\varphi_a$, that is
\begin{align*}
\widehat{\beta}(t,i)(p):=\varphi_a\circ\beta(t,i)\circ\varphi_a^{-1}(p)\quad\textnormal{and}\quad\widehat{\al}(g)( p):=\varphi_a\circ\al(g)\circ\varphi_a^{-1}(p)
\end{align*}
for all pointed trees $(t,i)\in\cT_*$, fractions $g\in G^V$, and sequences $p\in\{\0,\1\}^\N$. We are now focused on understanding the actions $\widehat{\beta}$ and $\widehat{\al}$ and we henceforth refer to these as the canonical actions of $\cF$. Note that $\widehat{\al}$ is determined by $\widehat{\beta}$ in a similar way as described in section \ref{sec:topology-and-compatibility}. For each colour $b\in S$ it will be convenient to notate $$B_{\0}:=\widehat{\beta}(Y_b,1) \text{ and } B_{\1}=\widehat{\beta}(Y_b,2).$$ Here the choice of upper case letter corresponds to the lower case colour, for instance $C_\0=\widehat{\beta}(Y_c,1)$ for some other colour $c\in S$. Our choice to use ``$a$-coordinates'' via $\varphi_a:\{\0,\1\}^\N\to\fC$ implies the actions of the pointed $a$-carets under $\widehat{\beta}$ is as simple as possible:
\begin{align*}
A_\0(p)=\0\cdot p\quad\textnormal{and}\quad
A_\1(p)=\1\cdot p \quad \text{ for all } p\in\{\0,\1\}^\N.
\end{align*}
The action for the other colours is more complicated.

\subsubsection{The action of pointed carets}\label{sec:pointed-carets} 
The remainder of this section is dedicated to understanding the action of the pointed carets of colours different from $a$. 
Recall that for any finite uncoloured binary tree $t$ there is an order-preserving map
\begin{align*}
\ell_t:\{1,2,\dots,n\}\to\{\0,\1\}^*,~i\mapsto\ell_t(i)
\end{align*}
which gives the address of the $i$th leaf of $t$. 
This extends naturally to a map $\ell_t:\{1,2,\dots,n\}^\N\to\{\0,\1\}^\N$ by $$\ell_t(i_1i_2i_3\dots):=\ell_t(i_1)\ell_t(i_2)\ell_t(i_3)\cdots.$$
This map is manifestly injective and continuous. 
Moreover, since the addresses of the leaves of $t$ partition $\{\0,\1\}^\N$ it follows that $\ell_t$ is also surjective. It follows that $\ell_t$ is a homeomorphism. Heuristically, the map $\ell_t$ consists of identifying the rooted infinite binary tree with the rooted infinite $n$-ary tree in which each $n$-ary caret is replaced by a copy of the tree $t$. Now, let us fix two finite uncoloured binary trees $x$ and $y$ both having $n>1$ leaves. 

\begin{assumption}\label{ass:2}
For the remainder of the section we assume $\cF$ has skein presentation $\la a,b|x(a)=y(b)\ra$.   
\end{assumption}

This is coherent with our previous assumption of having a cofinal sequence of $a$-trees. The skein relation $x(a)=y(b)$ implies $$\varphi_a\circ\ell_x=\varphi_b\circ\ell_y$$ as maps $\{1,2,\dots,n\}^\N\to\fC$. Recall here that $\varphi_c:\{\0,\1\}^\N\to\fC$ is the $c$-vertex embedding for $c\in\{a,b\}$.
With these considerations we can refine our understanding of $B_\0$ and $B_\1$. From the definitions we unwrap
\begin{align*}
B_{\tt i}&=\widehat{\beta}(Y_b,{\tt i})\\
&=\varphi_a\circ\beta(Y_a,{\tt i})\circ\varphi_a^{-1}\\
&=\ell_x\circ\ell_y^{-1}\circ\varphi_b\circ\beta(Y_b,{\tt i})\circ\varphi_b^{-1}\circ\ell_y\circ\ell_x^{-1}
\end{align*}
for both ${\tt i}\in\{\0,\1\}$. Let us explain in words what this transformation does to a binary sequence $p\in\{\0,\1\}^\N$.
\begin{enumerate}
    \item First, $p$ is uniquely identified with a sequence whose entries are leaves of the binary tree $x$.
    
    For example if the address of the second leaf of the tree $x$ is $\1\0$ and $p=\1\0\1\0\1\0\cdots$, then we can write $p=\ell_x(2)\ell_x(2)\ell_x(2)\cdots$.
    \item The coordinate change homeomorphism $\ell_y\circ\ell_x^{-1}$ replaces the $i$th entry $\ell_x(k_i)$ of $p$ with $\ell_y(k_i)$ for all $i\geqslant1$. 
    This gives a binary sequence $q=\ell_y\ell_x^{-1}(p)$.
    
    Continuing the example above, suppose the second leaf of the tree $y$ had address $\0\1$. Then $\ell_y\ell_x^{-1}(\1\0\1\0\1\0\cdots)=\ell_y(222\cdots)=\0\1\0\1\0\1\cdots$.
    \item The action $\varphi_b\circ\beta(Y_b,{\tt i})\circ\varphi_b^{-1}$ is done using only the colour $b$. It is then easy to describe as: $\varphi_b\circ\beta(Y_b,{\tt i})\circ\varphi_b^{-1}(q)={\tt i}\cdot q$ for ${\tt i}\in\{\0,\1\}$. 
    
    In the example above we have that $\varphi_b\circ\beta(Y_b,{\tt i})\circ\varphi_b^{-1}(\0\1\0\1\0\1\cdots)={\tt i}\0\1\0\1\0\1\cdots$ because the sequence $\0\1\0\1\0\1\cdots$ is in ``$b$-coordinates''.
    \item Finally, ${\tt i}\cdot q$ is uniquely identified with a sequence whose entries are leaves of the binary tree $y$. 
    Applying $\ell_x\circ\ell_y^{-1}$ consists in interpreting a pointed $b$-tree (or a limit of those) into a pointed $a$-tree. 
    A concrete description of this transformation is difficult to obtain without further assumptions on $x$ and $y$. 
\end{enumerate}

Recall that a right-vine is a tree define by starting with a caret and repeatedly gluing carets to the last leaf of the previous tree. 

\begin{assumption}\label{ass:3}
For the remainder of the section assume that the tree $y$ in the skein presentation $\la a,b|x(a)=y(b)\ra$ is a \emph{right-vine}. To emphasise this assumption we now write $\rho$ instead of $y$ ($\rho$ for \emph{right}-vine). 
\end{assumption}

Hence, the addresses of the leaves of $\rho$ are given by
\begin{align}\label{eq:right-vine-leaves}
\ell_\rho(i)=\1^{i-1}\0 \quad\textnormal{and}\quad \ell_\rho(n)=\1^{n-1}
\end{align}
for $1\leqslant i<n$ (compare to the diagrams \ref{eq:vines}). We introduce some more notation for the tree $x$. We write $L_x$ (resp.~$R_x$) for the length of the left (resp.~right) side of $x$, that is, $\ell_x(1)=\0^{L_x}$ and $\ell_x(n)=\1^{R_x}$. Since $\rho$ is a right-vine we have a prune relation $$(Y_a,\0)^{L_x}=\prune(x(a),\0^{L_x})\sim_{prune}\prune(\rho(b),\0)=(Y_b,\0).$$ 
Hence, from section \ref{sec:pruned} we obtain that the pointed trees $(Y_a,\0)^{L_x}$ and $(Y_b,\0)$ act the same under the canonical action $\beta$. Hence, $A_\0^{L_x}=B_\0$, and so
\begin{align*}
B_\0(p)=\0^{L_x}\cdot p \text{ for all } p\in\{\0,\1\}^\N
\end{align*}
We now understand well the action of $B_\0$ with the assumptions we have made.
\emph{It remains to understand the action of $B_\1$.}

\subsubsection{An infinite tree made from the finite tree $x$.}\label{sec:infinite-tree} 
To understand the action of $B_\1$ we now partition the Cantor space into infinitely many cones which are adapted to $B_\1$. 
First, recall that a \emph{quasi-tree} with \emph{cell} 
$x$ ($x$ being a tree) is a tree built by inductively gluing together copies of $x$; see \cite[Sec.~1.1.1]{Brothier-Seelig24}. 
Define $T_x$ to be the \emph{infinite} quasi-right-vine having cell $x$. In other words $T_x$ is constructed by starting with $x$ and inductively gluing another copy of $x$ to the last leaf. For example
\begin{align*}
\textnormal{if}\quad x=\Cell,\quad\textnormal{then}\quad T_x=\InfiniteQuasiRightVine.
\end{align*}
The address of the $i$th leaf of $T_x$ is thus the binary word $w_i:=\ell_x(n)^j\cdot \ell_x(k)$,
where $i=j(n-1)+k$, $j\geqslant0$, and $1\leqslant k\leqslant n-1$. Here $n$ is the number of leaves of $x$. 
Using this quasi-tree we can understand the action of $B_\1$. 
Recall a sequence $p\in\{\0,\1\}^\N$ has \emph{tail} $q\in\{\0,\1\}^\N$ if $p=w\cdot q$ for some finite word $w\in\{\0,\1\}^*$. Having the same tail is an equivalence relation on $\{\0,\1\}^\N$.

\begin{proposition}\label{prop:infinite-tree}
For all $i\geqslant1$ and $q\in\{\0,\1\}^\N$ we have $B_\1(w_i\cdot q)=w_{i+1}\cdot q$. In particular, $B_\1$ preserves tail equivalence.
\end{proposition}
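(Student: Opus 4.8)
The plan is to reduce the statement about the action of $B_\1$ on $\cC$ to the already-understood combinatorics of pruning and of the $b$-carets acting ``purely'' via the homeomorphism $\ell_y^{-1} = \ell_\rho^{-1}$. Recall from Section \ref{sec:pointed-carets} that
$$B_\1 = \ell_x\circ\ell_\rho^{-1}\circ\beta_b(Y_b,\1)\circ\ell_\rho\circ\ell_x^{-1},$$
and $\beta_b(Y_b,\1)(q) = \1\cdot q$. So the first step is to compute, for each $i\geqslant 1$, the image $\ell_x^{-1}(w_i\cdot q)$ inside $\{1,\dots,n\}^\N$, then push it through $\ell_\rho$ (turning it into a binary sequence), prepend a $\1$, and pull back through $\ell_\rho^{-1}$ and then $\ell_x$. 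The key observation driving the whole argument is the defining recursive structure of $T_x$: since $T_x$ is the infinite quasi-right-vine with cell $x$, its $i$th leaf address $w_i = \ell_x(n)^j\cdot\ell_x(k)$ (with $i=j(n-1)+k$, $1\leqslant k\leqslant n-1$) records exactly ``descend $j$ full copies of $x$ along the rightmost leaf $\ell_x(n)=\1^{R_x}$, then take the $k$th leaf of the next copy of $x$''.

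The main computation is to show that $\ell_x^{-1}$ reads $w_i\cdot q$ as the sequence in $\{1,\dots,n\}^\N$ whose first $j$ entries are $n$ (the last leaf of $x$), whose $(j{+}1)$st entry is $k$, and whose remaining tail is $\ell_x^{-1}(q)$; this is just the statement that the leaves of $x$ partition $\cC$ and that $\ell_x(n)^j\cdot\ell_x(k)$ is the address obtained by concatenating the corresponding caret-choices. Applying $\ell_\rho$ to this $n$-ary sequence and using the right-vine leaf formula $(\ref{eq:right-vine-leaves})$ — $\ell_\rho(n)=\1^{n-1}$ and $\ell_\rho(k)=\1^{k-1}\0$ for $k<n$ — converts the prefix ``$n$ repeated $j$ times then $k$'' into the binary word $\1^{(n-1)j}\cdot\1^{k-1}\0 = \1^{(n-1)j+k-1}\0 = \1^{i-1}\0$, followed by the binary tail $\ell_\rho(\ell_x^{-1}(q))$. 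Prepending $\1$ gives $\1^{i}\0\cdot(\text{tail})$, and now running the same identification in reverse: $\1^i\0$ parses (via $\ell_\rho^{-1}$) as the $n$-ary prefix consisting of $n$ repeated $\lfloor i/(n-1)\rfloor$-ish times followed by the appropriate residue — precisely the prefix coding the leaf numbered $i+1$ in the quasi-right-vine — and then $\ell_x$ turns that $n$-ary prefix back into the binary word $w_{i+1}$. Care is needed at the ``carry'' case $k=n-1$, where incrementing $i$ rolls over to the next copy of $x$; but since $\1^{i}\0 = \1^{(n-1)(j+1)}\0$ exactly when $i = (n-1)(j+1)$, i.e. $k=n-1$ and $i+1 = (n-1)(j+1)+1$ has $j'=j+1$, $k'=1$, the formula $w_{i+1}=\ell_x(n)^{j+1}\cdot\ell_x(1)$ comes out correctly. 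Throughout, the tail $q$ is never touched, which immediately yields $B_\1(w_i\cdot q) = w_{i+1}\cdot q$.

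The ``in particular'' is then essentially free: any $p\in\cC$ with tail $q$ can, up to replacing $q$ by a further tail, be written as $w_i\cdot q$ for some $i$ — indeed the leaves $\{w_i : i\geqslant 1\}$ of $T_x$ form a cofinal family of prefixes accumulating only at the point $\omega=\1^\infty$, so every $p\neq\omega$ lies in some cone $\Cone(w_i,\cdot)$ and hence has the form $w_i\cdot q$ — and then $B_\1(p) = w_{i+1}\cdot q$ has the same tail $q$. The point $\omega$ is the global fixed point $\beta(\text{anything})(\omega)=\omega$ (it is the max of $\fC$), so tail equivalence is preserved there trivially. Alternatively one can simply note that $w_i$ and $w_{i+1}$ differ, and hence $B_\1$ shifts the ``depth at which the tail begins'' but leaves the tail itself intact, so $B_\1(p)$ and $p$ always share the tail $q$.

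\textbf{Main obstacle.} The only genuinely delicate point is bookkeeping the two nested identifications $\ell_x$ (binary $\leftrightarrow$ $n$-ary via cells of shape $x$) and $\ell_\rho$ (binary $\leftrightarrow$ $n$-ary via right-vine cells) simultaneously, and making sure the index arithmetic $i = j(n-1)+k$ behaves correctly under $i\mapsto i+1$ across the carry boundary $k=n-1\rightsquigarrow k'=1,\ j'=j+1$. This is purely combinatorial and finite, but it is the step where an off-by-one error is easiest to make, so I would verify it explicitly on the running example $x=\Cell$ (where $n=3$, so $w_i$ toggles between ``$\0$'' and ``$\1\0$'' inside each cell) before writing the general case.
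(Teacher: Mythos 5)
Your proposal is correct and follows essentially the same route as the paper's proof: conjugate $B_\1$ by $\ell_\rho\circ\ell_x^{-1}$, observe that prepending $\1$ turns the right-vine address $\1^{i-1}\0$ of the $i$th leaf into $\1^{i}\0$ (with the same carry analysis at $k=n-1$), and translate back via $\ell_x\circ\ell_\rho^{-1}$, leaving the tail untouched. The paper packages the carry step as the identities $\1\cdot\ell_\rho(k)=\ell_\rho(k+1)$ and $\1\cdot\ell_\rho(n-1)=\ell_\rho(n)\cdot\ell_\rho(1)$, but this is the same computation.
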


\begin{proof}
Recall that $B_\1:=\widehat{\beta}(Y_b,\1)$. Since $\rho$ is a right-vine the addresses of the leaves of $\rho$ satisfy
\begin{align}\label{eq:right-vine}
\1\cdot\ell_\rho(k)=\ell_\rho(k+1)\quad\textnormal{and}\quad\1\cdot\ell_\rho(n-1)=\ell_\rho(n)\cdot\ell_\rho(1)
\end{align}
for all $1\leqslant k<n-1$; see (\ref{eq:right-vine-leaves}). Let $i\geqslant1$ and consider the address of the $i$th leaf $w_i$ of the infinite binary tree $T_x$.
We have
\begin{align*}
\ell_\rho\ell_x^{-1}(w_i\cdot q)=\ell_\rho(n)^j\cdot\ell_\rho(k)\cdot \ell_\rho\ell_x^{-1}(q)
\end{align*}
where $i=j(n-1)+k$. Applying $\varphi_b\circ\beta(Y_b,\1)\circ\varphi_b^{-1}$ to this gives
\begin{align*}
\varphi_b\circ\beta(Y_b,\1)\circ\varphi_b^{-1}(\ell_\rho(n)^j\cdot\ell_\rho(k)\cdot q)&=\ell_\rho(n)^j\cdot(\varphi_b\circ\beta(Y_b,\1)\circ\varphi_b^{-1})(\ell_\rho(k)\cdot\ell_\rho\ell_x^{-1}(q))\\
&=\ell_\rho(n)^j\cdot\1\cdot\ell_\rho(k)\cdot\ell_\rho\ell_x^{-1}(q).
\end{align*}
Using (\ref{eq:right-vine}) we deduce this is $\ell_\rho(n)^j\cdot\ell_\rho(k+1)\cdot\ell_\rho\ell_x^{-1}(q)$ if $k<n-1$ and $\ell_\rho(n)^{j+1}\cdot\ell_\rho(1)\cdot\ell_\rho\ell_x^{-1}(q)$ if $k=n-1$.
Finally, applying $\ell_x\ell_\rho^{-1}$ gives
\begin{align*}
B_\1(w_i\cdot q)=
\begin{cases}
\ell_x(n)^j\cdot\ell_x(k+1)\cdot q&\textnormal{if $k<n-1$}\\
\ell_x(n)^{j+1}\cdot\ell_x(1)\cdot q&\textnormal{if $k=n-1$}.
\end{cases}
\end{align*}
In either case this is just $w_{i+1}\cdot q$.
\end{proof}

\begin{remark}\label{rem:B1}
\begin{enumerate}
    \item The family of cones $(\{w_i\cdot p:p\in\{\0,\1\}^\N\}:i\geqslant1)$ partitions $\{\0,\1\}^\N\setminus\{\ov\1\}$. The above proposition asserts that around every point of $\{\0,\1\}^\N\setminus\{\ov\1\}$ we can find a neighbourhood on which $B_\1$ acts like a pointed $a$-tree. In fact, $B_\1$ is locally ``affine'' on $\{\0,\1\}^\N\setminus\{\ov\1\}$. Moreover, $B_\1$ is order-preserving and so it fixes $\ov\1$. As we now have an explicit description of the maps $A_\0$, $A_\1$, $B_\0$, and $B_\1$, we can completely describe the canonical monoid action $\widehat{\beta}:\cT_\ast\act\{\0,\1\}^\N$ and canonical group action $\widehat{\al}:G^V\act\{\0,\1\}^\N$ in the setting of assumptions \ref{ass:1}, \ref{ass:2}, and \ref{ass:3}. We leave pushing beyond these assumptions for future work.   
    \item That $B_\1$ preserves tail equivalence is not true \emph{in general}. Indeed, for the skein presentation $\la a,b|a_1a_1a_2=b_1b_2b_2\ra$ one may quickly verify that $B_\1(\ov{\0\1\1})=\ov{\1\0\0}$.
    \item By construction, the restriction of the action $\widehat{\al}:G^V\act\{\0,\1\}^\N$ to the $a$-copy of Thompson's group $V$ inside $G^V$ is the usual action of $V$ on binary sequences. 
This can be explicitly verified using the descriptions of the caret maps $A_\0,A_\1$ of above. Let $j:\{\0,\1\}^\N\onto\R/\Z$ be the usual surjection
\begin{align*}
j(p)=\sum_{k\geqslant1}\frac{p_k}{2^k}.
\end{align*}
The canonical action $\widehat{\al}:G^V\act\{\0,\1\}^\N$ induces actions $\gamma^T:G^T\act \R/\Z$ and $\gamma:G\act(0,1)$ through the map $j$. 
By the above observation, these actions restricted to $T$ and $F$ are just their standard actions, which are rigid. It follows that if the actions $\gamma^T:G^T\act\R/\Z$ and $\gamma:G\act(0,1)$ of the larger groups $G^T$ and $G$ are faithful, then they are automatically rigid.
This corroborates our general results from section \ref{sec:rigid-FS} in the special case of assumption \ref{ass:1}.
\end{enumerate}
\end{remark}

\subsubsection{Computation of germ groups} 
Under our current assumption we are able to compute the missing germ groups of $G\act \fC$ from proposition \ref{prop:germ-pres}.

\begin{proposition}\label{prop:germ-not-singular}
Let $\cF=\FS\la a,b|x(a)=\rho(b)\ra$ have canonical actions $\widehat{\al}:G^V\act\{\0,\1\}^\N$ and $\widehat{\beta}:\cT_\ast\act\{\0,\1\}^\N$ as described above. Suppose $q\in\{\0,\1\}^\N$ does \emph{not} have tail $\ov\1$.
\begin{enumerate}
    \item For all $(t,\ell)\in\cT_\ast$ there is prefix $u$ of $q$ and a binary word $v$ such that $\beta(t,\ell)(u\cdot z)=v\cdot z$ for all $z\in\{\0,\1\}^\N$. 
    \item The $a$-vertex embedding $V\into G^V$ induces an isomorphism of germ groups $[V\act\{\0,\1\}^\N;q]\simeq [G^V\act\{\0,\1\}^\N;q]$.
\end{enumerate}
\end{proposition}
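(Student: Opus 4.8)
The plan is to leverage the dynamics developed in Section \ref{sec:dynamics}, in particular Proposition \ref{prop:infinite-tree}, which describes the action of $B_\1$ explicitly on the cones $w_i\cdot\cC$ partitioning $\cC\setminus\{\1^\infty\}$. The key structural point is that a sequence $q\in\cC$ has tail $\1^\infty$ if and only if its $\beta$-orbit under $\cT_\ast$ can ``escape to the right'' past the cones $w_i\cdot\cC$; since we assume $q$ does not have tail $\1^\infty$, any application of a pointed caret to $q$ lands in a region where the caret acts as a prefix substitution.

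For part (1), I would argue by induction on the length of the vine decomposition of the (free) tree $t$, or equivalently by factoring $\beta(t,\ell)$ as a composition of the basic caret maps $A_\0, A_\1, B_\0, B_\1$ (using the identification of $\cF$ with $\FS\la a,b\mid x(a)=\rho(b)\ra$). The maps $A_\0, A_\1, B_\0=A_\0^{L_x}$ all act globally by prefixing a fixed word, so the only issue is $B_\1$. By Proposition \ref{prop:infinite-tree}, $B_\1$ sends $w_i\cdot\cC$ to $w_{i+1}\cdot\cC$ affinely (substituting the prefix $w_i$ by $w_{i+1}$) and fixes $\1^\infty$. The point $q$ lies in some $w_i\cdot\cC$ (as $q\ne\1^\infty$), so $B_\1$ acts as a prefix substitution on a neighbourhood of $q$, and the image $B_\1(q)$ still does not have tail $\1^\infty$ --- this is because the tail of $B_\1(w_i\cdot z)=w_{i+1}\cdot z$ agrees with the tail of $z$, which agrees with the tail of $q$. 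So the property ``does not have tail $\1^\infty$'' is preserved along the orbit, and composing finitely many such prefix-substitution maps (on successively smaller cones around $q$) gives a single cone $u\cdot\cC$ and word $v$ with $\beta(t,\ell)(u\cdot z)=v\cdot z$. I would also need to track that these compositions stay prefix-substitutions, which follows since each factor is locally a prefix substitution around the relevant point; a short lemma recording that the set of points without tail $\1^\infty$ is $\beta(\cT_\ast)$-invariant makes this clean.

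For part (2), recall $\al([t,\pi,s])$ is locally determined by $\beta$: on $\Cone(s,j)$ it equals $\beta(t,\pi(j))\circ\beta(s,j)^{-1}$. Given $q$ without tail $\1^\infty$, choose a standard partition refining things enough that $q$ lies in some $\Cone(s,j)$ with both $\beta(s,j)^{-1}$ and $\beta(t,\pi(j))$ acting as prefix substitutions near the relevant points (using part (1)). Then $\al(g)$ acts on a neighbourhood of $q$ by $w\cdot z\mapsto w'\cdot z$ for words $w,w'$ --- which is precisely the germ at $q$ of an element of the standard Thompson group $V$ under the $a$-vertex colouring $V\into G^V$ (by item (3) of Remark \ref{rem:B1}, that embedding realises $V$ as the usual action on binary sequences). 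This shows the natural map $\Germ(V\act\cC;q)\to\Germ(G^V\act\cC;q)$ is surjective; injectivity is immediate since $V\into G^V$ is injective and a germ of an element of $V$ being trivial at $q$ already forces that element to agree with the identity near $q$, whose germ in $\Germ(V\act\cC;q)$ is trivial. (Alternatively, injectivity follows because the map is a restriction of the injection on full germ groupoids.)

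The main obstacle I anticipate is the bookkeeping in part (1): making precise that a composition of maps, each of which is \emph{only locally} a prefix substitution (on a cone depending on where we are in the orbit), remains locally a prefix substitution around the fixed point $q$, and producing a single uniform cone $u\cdot\cC$ on which the formula $\beta(t,\ell)(u\cdot z)=v\cdot z$ holds. The crucial enabling fact that defuses this is the invariance of $\{q : q \text{ has no tail } \1^\infty\}$ under $\beta(\cT_\ast)$, which guarantees we never hit the exceptional fixed point $\1^\infty$ of $B_\1$ along the way; once that is isolated, the rest is a finite induction.
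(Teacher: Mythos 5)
Your plan is correct and matches the paper's proof essentially step for step: both argue part (1) by induction on the length of $\beta(t,\ell)$ as a word in the caret generators $A_\0,A_\1,B_\0,B_\1$, using Proposition \ref{prop:infinite-tree} to handle $B_\1$ and the (implicitly used in the paper, explicitly isolated by you) fact that prefix substitutions preserve tails so the orbit of $q$ never reaches $\1^\infty$; part (2) then follows in both treatments by writing $\al(g)$ near $q$ as a prefix substitution $w\cdot z\mapsto w'\cdot z$ and realising it by an element of $V$. The only cosmetic difference is that the paper takes the source tree $s$ to be $a$-coloured so that only $\beta(t,\pi(\ell))$ needs part (1), whereas you apply part (1) to both sides.
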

\begin{proof}
{\it Proof of item (1).} Fix a sequence $q\in\{\0,\1\}^\N$ whose tail is not $\ov\1$ meaning $q$ is \emph{not} of the form $w\cdot\ov\1$ for some binary word $w$. 
Note the set $\Pi:=\{A_\0,A_\1,B_\0,B_\1\}$ generates the image monoid $\widehat{\beta}(\cT_\ast)$. 
We will prove the claim by induction on length of a product of generators from $\Pi$ representing $\widehat{\beta}(t,\ell)$. Among length $1$ words the claim is trivial for the generators $A_\0$, $A_\1$, and $B_\0$. Indeed, we may pick the prefix $u$ of $q=\varnothing\cdot q$ to be the empty word $\varnothing$ as $$A_\0(\varnothing\cdot z)=\0\cdot z,~A_\1(\varnothing\cdot z)=\1\cdot z,~\textnormal{and}~B_\0(\varnothing\cdot z)=\0^{L_x}\cdot z$$ for all $z\in\{\0,\1\}^\N$. 
Here the choice of $v$ is $\0$, $\1$, and $\0^{L_x}$, respectively. 
As for $B_\1$, since $q$ does not have tail $\ov\1$, we are guaranteed that it passes through some leaf $w_i$ of the infinite tree $T_x$ from section \ref{sec:infinite-tree}. Hence, if we choose $u=w_{i}$, which is a prefix of $q$ as just explained, and $v=w_{i+1}$, we have that $$B_\1(u\cdot z)=B_1(w_i\cdot z)=w_{i+1}\cdot z=v\cdot z$$ for all $z\in\{\0,\1\}^\N$. We have just proven the claim holds for length $1$ words in the generators $\Pi$. Assume now the claim holds for length $k\geqslant2$ words over $\Pi$ and let $\widehat{\beta}(t,\ell)$ be represented by a word of length $k+1$. We may factor $\widehat{\beta}(t,\ell)$ as $Z\circ W$, where $Z\in\Pi$ is a pointed caret action, and $W$ has length $k$. By the inductive hypothesis pick a prefix $u$ of $q$ and a binary word $v$ such that $W(u\cdot z)=v\cdot z$ for all $z\in\{\0,\1\}^\N$. Similar to the base case, if $Z$ is $A_\0$, $A_\1$, or $B_\0$, then the claim is clear as $$A_\0W(u\cdot z)=\0v\cdot z,~A_\1W(u\cdot z)=\1v\cdot z,~\textnormal{and}~B_\0W(u\cdot z)=\0^{L_x}v\cdot z.$$
It remains to consider $Z=B_\1$. Since $u$ is a prefix of $q$ we may write $q=u\cdot q'$ for some $q'\in\{\0,\1\}^\N$. Since $q$ does not have tail $\ov\1$ we may pick long enough prefix $u'$ of $q'$ such that the word $v\cdot u'$ passes through one of the leaves of the infinite binary tree $T_x$. Hence, we may write $v\cdot u'=w_j\cdot u''$ where $w_j$ is a leaf of $T_x$ and $u''$ is some (possibly empty) binary word. We now have
\begin{align*}
\widehat{\beta}(t,\ell)(u\cdot u'\cdot z)=B_\1W(u\cdot u'\cdot z)=B_\1(v\cdot u'\cdot z)
=B_\1(w_j\cdot u''\cdot z)
=w_{j+1}\cdot u''\cdot z
\end{align*}
for all $z\in\{\0,\1\}^\N$. Since $u\cdot u'$ is a prefix of $q$, this proves the claim.

\vspace{2mm}
{\it Proof of item (2).} Again, let $q\in\{\0,\1\}^\N$ be a sequence whose tail is not equal to $\ov\1$. The $a$-vertex embedding $V\into G^V$ induces an embedding on germ groups $[V\act\{\0,\1\}^\N;q]\into[G^V\act\{\0,\1\}^\N;q]$. We aim to show this is surjective. To this end, let $g\in G^V$ and pick a triple $(t,\pi,s)$ representing $g$ such that $s$ is $a$-coloured (possible since $\cF$ has a cofinal sequence of $a$-trees). Let $\ell$ be the address of the leaf of $s$ whose cone contains the sequence $q$ so that $q=\ell\cdot y$ for some $y\in\{\0,\1\}^\N$. Since $q$ does not have tail $\ov\1$ it follows that $y$ does not have tail $\ov\1$. With this we may use item (1) to pick a finite prefix $u$ of $y$ and a binary word $v$ such that $\widehat{\beta}(t,\pi(\ell))(u\cdot z)=v\cdot z$ for all $z\in\{\0,\1\}^\N$. By compatibility of the canonical monoid and group actions we have
\begin{align*}
\widehat{\al}(g)(\ell\cdot u\cdot z)&=\widehat{\beta}(t,\pi(\ell))\circ\widehat{\beta}(s,\ell)^{-1}(\ell\cdot u\cdot z)
=\widehat{\beta}(t,\pi(\ell))( u\cdot z)
=v\cdot z
\end{align*}
for all $z\in\{\0,\1\}^\N$. It is not hard to construct an element $h\in V$ that acts by prefix replacement $\ell\cdot u\cdot z\mapsto v\cdot z$ on the cone determined by $\ell\cdot u$. Hence, the germ of $h\in V$ at $q$ is that of the germ of $g\in G^V$ at $q$, and so the induced map $[V\act\{\0,\1\}^\N;q]\into[G^V\act\{\0,\1\}^\N;q]$ is surjective.
\end{proof}

\subsection{Classification}\label{sec:isomorphism-problem} We now prove Theorem \ref{mainthm:B}.
Consider the FS categories $\cH_n,\cG_t,\cJ_t$ from the introduction with $n\geqslant 2$ and $t$ a non-trivial uncoloured tree.

\subsubsection{The groups are simple}
The category $\cH_n$ was proven to be Ore and have faithful canonical action in \cite[Sec.~7]{Brothier-Seelig24}.
Hence, it produces three groups $H_n,H_n^T,H_n^V$ so that $D(H_n^T)$ and $D(H_n^V)$ are simple.
Additionally, $H_n$ was proven to be isomorphic to the $n$-ary Thompson group (or $F$-type Higman--Thompson group).
In particular, $D(H_n)=D(H_n^0)$ and this group is simple but not finitely generated by Proposition \ref{prop:not-fg}.
The categories $\cG_t$ and $\cJ_t$ are Ore by \cite[Theo.~8.3]{Brothier22}. 
The category $\cG_t$ was proven to have faithful canonical action in \cite[Sec.~6]{Brothier-Seelig24}. 
A slight generalisation of \cite[Theo.~2.6]{Brothier-Seelig26} implies $\cJ_t$ has faithful canonical action.
They provide six groups $G_t,G_t^T,G_t^V,J_t,J_t^T,J_t^V$ that are all finitely presented (and are in fact of type $\tt F_\infty$) by \cite[Theo.~8.3]{Brothier22}.
We consider the subgroups $G_t^0\subset G_t$ and $J_t^0\subset J_t$ obtained by taking the elements acting trivially near the extremities of $\fC$ in their canonical actions.
The groups $D(G_t^0),D(J_t^0)$ are simple but are not finitely generated by Proposition \ref{prop:not-fg}.
We will later show that $D(G_t^0)=D(G_t)$ and $D(J_t^0)\neq D(J_t).$
The other derived groups $D(G_t^T),D(G_t^V),D(J_t^T),D(J_t^V)$ are simple since their FS categories have faithful canonical action.

\subsubsection{Classification using abelianisation}
Write $\cE_F,\cE_T,\cE_V$ for the families of the $D(H_n)$, $D(G_t^0)$, $D(J_t^0)$, the $D(H_n^T)$, $D(G_t^T)$, $D(J_t^T)$, and the $D(H_n^V)$, $D(G_t^V)$, $D(J_t^V)$, respectively.
We have proven that the groups of $\cE:=\cE_F\cup \cE_T\cup \cE_V$ are all simple.
Moreover, Theorem \ref{theo:rigid-FS} implies that they are all rigid.
The rigid spaces of groups in $\cE_F,\cE_T,\cE_V$ are homeomorphic to the open real unit interval, the circle, and the Cantor space, respectively.
By uniqueness of the rigid space we deduce that if $A,B\in\cE$ and are isomorphic, then they must both be in $\cE_F,\cE_T$, or in $\cE_V.$
We now compute the abelianisation of the $H_n^X,G_t^X,J_t^X$ where $X=T$ or $V$.
Theorem \ref{theo:abelianisation} implies that
\begin{align*}
(G_t^X)_{ab}\simeq\Z/N_t\Z,\quad(J_t^X)_{ab}\simeq\Z/(N_t+1)\Z,\quad(H_n^X)_{ab}\simeq\Z^{n-2}.
\end{align*}
where $N_t$ denotes the number of leaves in the tree $t$.
Since $t$ is assumed to be a non-trivial tree it has at least two leaves and thus $N_t\geqslant 2$ and $N_t+1\geqslant 3$.
In particular, $D(G_t^X)$ and $D(J_t^X)$ are proper finite index subgroups of $G_t^X$ and $J_t^X$. 
They are finitely presented (in fact of type $\tt F_\infty$) since $G_t^X$ and $J_t^X$ are.
By proposition \ref{prop:abelianisation-invariant}, we deduce that 
$$D(G_t^X)\simeq D(G_s^X)\Rightarrow t,s \text{ have same number of leaves}$$
and a similar statement holds for the $J_t^X$'s.
Additionally, $$D(H_n^X)\simeq D(H_m^X)\Rightarrow n=m.$$
Moreover, $D(H_n^X)$ is not isomorphic to $D(G_t^X)$, nor $D(J_t^X)$.

Consider now the $F$-case with the groups $D(H_n^0),D(G_t^0),D(J_t^0).$
The abelianisations of $H_n^0,G_t^0,J_t^0$ are remembered by their derived subgroup (see Proposition \ref{prop:abelianisation-invariant}). 
Moreover, these abelianisations are isomorphic to the abelianisation of the corresponding $T$-type FS groups by Corollary \ref{cor:abelianisation-Gzero}.
We deduce similar conclusion that in the $T$ and $V$-cases: $D(G^0_t)\simeq D(G^0_s)$ (resp.~$D(J_t^0)\simeq D(J_s^0)$) implies $N_t=N_s$, $D(H_n)\simeq D(H_m)$ implies $n=m$, and $D(H_n)\not\simeq D(G^0_t),D(J^0_t).$

\subsubsection{Classification using germ groups}

It remains to show that $D(G_t^X)\not\simeq D(J_s^X)$ for $X=T,V$ and that $D(G_t^0)\not\simeq D(J_s^0).$
We do it by comparing the list of all germ groups of their rigid actions. 

Recall the action canonical action $\widehat\al:\Ga^V\act\{\0,\1\}^\N$ for an FS group $\Ga^V$ satisfying assumptions (\ref{ass:1},~\ref{ass:2},~\ref{ass:3}) described in section \ref{sec:dynamics}. This action is rigid. 

For the usual action of Thompson's group $V\act\{\0,\1\}^\N$ it is standard that the group of germs $[V\act\{\0,\1\}^\N;p]\simeq\Z$ if $p$ has periodic tail and is trivial otherwise. By propositions \ref{prop:germ-pres} and \ref{prop:germ-not-singular} we deduce:
\begin{align*}
[G^V_t\act\{\0,\1\}^\N;p]&\simeq
\begin{cases}
\Gr\la a,b|a=b^{N_t}\ra&~~~~\textnormal{if $p$ has tail $\ov\1$,}\\
\Z&~~~~\textnormal{if $p$ has periodic tail not equal $\ov\1$,}\\
1&~~~~\textnormal{otherwise,}
\end{cases}\\
[J^V_s\act\{\0,\1\}^\N;p]&\simeq
\begin{cases}
\Gr\la a,b|a^2=b^{N_s+1}\ra&\textnormal{if $p$ has tail $\ov\1$,}\\
\Z&\textnormal{if $p$ has periodic tail not equal $\ov\1$,}\\
1&\textnormal{otherwise.}
\end{cases}
\end{align*}
On one hand we have $\Gr\la a,b|a=b^{N_t}\ra\simeq\Z$ generated by $b$ and on the other hand we obtain a quotient $$\Gr\la a,b|a^2=b^{N_s+1}\ra\onto\Z/2\Z*\Z/(N_s+1)\Z$$ by imposing the relation $a^2=b^{N_s+1}=e$.
The tree $s$ was assumed non-trivial implying that $N_s+1\geqslant 3.$
This implies that $\Z/2\Z * \Z/(N_s+1)\Z$ is non-abelian (it is not even amenable).
Now, $D(G_t^V)$ has a rigid action whose full group is $G_t^V$.
The rigid action of $G_t^V$ is conjugate to the action $G_t^V\act\{\0,\1\}^\N$.
Hence, $D(G_t^V)$ remembers the list of germ groups for the action $G_t^V\act\{\0,\1\}^\N.$
The same apply to $J_s^V$.
Therefore, if $D(G_t^V)$ were isomorphic to $D(J_s^V)$, there would exist a homeomorphism 
$\psi$ of the Cantor space $\{\0,\1\}^\N$ so that $$[G_t^V\act\{\0,\1\}^\N;p]\simeq [J_s^V\act \{\0,\1\}^\N;\psi(p)]$$ which is a contradiction.
Similar arguments can be made for the $F$ and $T$-cases. 
Finally, observe that the germ groups of $G_t$ at $o$ and at $\omega$ are abelian. This implies that $D(G_t)\subset G^0$ and thus $D(G_t)=D(G^0)$ by Claim 3 of the proof of Theorem 3.5 in \cite{Brothier-Seelig24}. From there we deduce that $D(G_t)=D(G_t^0).$
However, the germ group of $J_t$ at $\omega$ is non-abelian. This implies that $D(J_t)\not\subset J_t^0$ and in particular $D(J_t)\neq D(J_t^0).$

\section*{Index of notations}
\begin{itemize}
    \item $F\subset T\subset V$ are the Thompson groups;
    \item $\cF$ a forest-skein (FS) category. $\cF^T,\cF^V$ the $T$ and $V$-type FS categories roughly obtained by adding cyclic permutation and all permutations to $\cF$, respectively.
    \item $\cT$ the set of trees inside an FS category $\cF$;
    \item $\cT_*$ the monoid of pointed trees inside an FS category $\cF$;
    \item $(t,j)\in \cT_*$ the tree $t$ with distinguished leaf $j$;
    \item $G^F\subset G^T\subset G^V$ are the $F,T,V$-type FS groups;
    \item $G$ often denotes $G^F$ an $F$-type FS group;
    \item $\alpha:G^V\act \fC$ is the canonical action;
    \item $o,\omega$ are the min and max of $\fC$, respectively;
    \item $\cT_o,\cT_\omega$ the submonoids of $\cT_*$ of pointed trees with distinguish leaf the first and last, respectively;
    \item $G^{0}$ is the subgroup of an $F$-type FS group $G$ equal to the $g\in G$ acting trivially in a neighbourhood of $o$ and $\omega$; 
    \item $i^f$ denotes the numbering of the first leaf of the $i$th tree of a forest $f$;
    \item $\fQ=\cT_*/\sim$ is the quotient space for the equivalence relation $\sim$ generated by $(t,i)\sim (tf,i^f)$;
    \item $\beta:\cT_*\act \fQ$ is the quasi-regular action;
    \item $\fQ^-,\fQ^+$ are copies of $\fQ$ that are two subspaces of $\fC$ analogous to the points that are eventually constant inside $\{\0,\1\}^\N$;
    \item $\{\0,\1\}^*$ the monoid of finite binary strings;
    \item $\{\0,\1\}^\N$ the Cantor space;
    \item $D(\Ga)$ is the derived subgroup of $\Ga$;
    \item $[[\Ga\act Z]]$ is the full group of the action $\Ga\act Z$: homeomorphisms $g$ of $Z$ for which we may cover $Z$ with finitely many open subsets on which $g$ coincide with an element of $\Ga$;
    \item $[\Ga\act Z;z]$ is the germ group of the action $\Ga\act Z$ at the point $z$;
    \item $\ga^X:G^X\act \fR^X$ rigid action, with $\fR^X$ the rigid space. We set $\gamma:=\gamma^F$ and $\fR:=\fR^F$;
\end{itemize}

\newcommand{\etalchar}[1]{$^{#1}$}

\end{document}